\documentclass[12pt,a4paper]{article}
\usepackage{}

\marginparwidth 0pt \oddsidemargin 0pt \evensidemargin 0pt
\topmargin -1.4 cm \textheight 24.5 truecm \textwidth 16.0 truecm
\parskip 4pt

\usepackage{latexsym}
\usepackage{amsmath}
\usepackage{amssymb}
\usepackage{arydshln}

\usepackage{cite}
\usepackage{stmaryrd}
\usepackage{enumerate}

\usepackage{hyperref}

\usepackage{color}
\usepackage{lineno}
\usepackage{graphicx}
\usepackage{ae}
\usepackage{amsmath}
\usepackage{amssymb}
\usepackage{latexsym}
\usepackage{url}
\usepackage{epsfig}
\usepackage{mathrsfs}
\usepackage{amsfonts}
\usepackage{amsthm}

\usepackage{float}
\usepackage{subfig}
\usepackage{tikz}
\usetikzlibrary{positioning}





\newcommand{\Cay}{\mathrm{Cay}}
\newcommand{\spec}{\mathrm{spec}}
\newcommand{\ICG}{\mathrm{ICG}}

\newcommand{\Per}{\mathcal{P}}

\newcommand{\CP}{\mathcal{J}}
\newtheorem{theorem}{Theorem}[section]

\newtheorem{lemma}[theorem]{Lemma}

\newtheorem{cor}[theorem]{Corollary}

\theoremstyle{definition}

\newtheorem{conjecture}{Conjecture}


\numberwithin{equation}{section} 
\allowdisplaybreaks    


\def\qed{\hfill$\Box$\vspace{12pt}}



\long\def\delete#1{}



\usepackage{xcolor}
\usepackage[normalem]{ulem}


\begin{document}
\title {So's conjecture for integral circulant graphs of $4$ types}

\author{Hao Li$^{a,b,c}$,~Xiaogang Liu$^{a,b,c,}$\thanks{Supported by the National Natural Science Foundation of China (No. 12371358) and the Guangdong Basic and Applied Basic Research Foundation (No. 2023A1515010986).}~$^,$\thanks{ Corresponding author. Email addresses: haoli1111@mail.nwpu.edu.cn, xiaogliu@nwpu.edu.cn.}~
\\[2mm]
{\small $^a$School of Mathematics and Statistics,}\\[-0.8ex]
{\small Northwestern Polytechnical University, Xi'an, Shaanxi 710072, P.R.~China}\\
{\small $^b$Research \& Development Institute of Northwestern Polytechnical University in Shenzhen,}\\[-0.8ex]
{\small Shenzhen, Guandong 518063, P.R. China}\\
{\small $^c$Xi'an-Budapest Joint Research Center for Combinatorics,}\\[-0.8ex]
{\small Northwestern Polytechnical University, Xi'an, Shaanxi 710129, P.R. China}\\
}

\date{}

\openup 0.5\jot
\maketitle

\begin{abstract}
In [\emph{Discrete Mathematics} 306 (2005) 153-158], So proposed a conjecture saying that integral circulant graphs with different connection sets have different spectra. This conjecture is still open. We prove that this conjecture holds for integral circulant graphs whose orders have prime factorization of $4$ types.



\smallskip

\emph{Keywords:} Circulant graph. Integral circulant graph. Isospectral graph.

\emph{Mathematics Subject Classification (2010):} 05C50, 05C25
\end{abstract}
\section{Introduction}
\label{introduction}

The \emph{spectrum} of a graph $\Gamma$, denoted by $\spec(\Gamma)$, is the multiset of the eigenvalues of the adjacency matrix of $\Gamma$. Graphs are called \emph{isospectral} or \emph{cospectral} if they have the same spectrum. An \emph{integral graph} is a graph whose spectrum contains integers only. Let $G$ be a group, and let $S$ be a symmetric subset (that is, $\forall s\in S$, $s^{-1}\in S$) of $G$ without the identity. The \emph{Cayley graph} $\Cay(G,S)$ is defined to be the graph with vertex set $G$ and edges drawn from $g\in G$ to $h\in G$ whenever $hg^{-1}\in S$. The set $S$ is called the \emph{connection set} of $\Cay(G,S)$. In particular, if $G$ is a cyclic group, then $\Cay(G,S)$ is called a \emph{circulant graph}. We denote the \emph{ring of integers modulo $n$} by $\mathbb{Z}_{n}$. In this work, the spectrum of $\Cay(\mathbb{Z}_{n},S)$ is expressed in the following way:
\begin{align*}
\spec(\Cay(\mathbb{Z}_{n},S))=\left(
\begin{array}{cccc}
\nu_{1} & \nu_{2} & \ldots & \nu_{J} \\
m_{1} & m_{2} & \ldots & m_{J}
\end{array}
\right),
\end{align*}
where for every $j\in\{1,2,\ldots,J\}$, $\nu_{j}$ denotes distinct eigenvalues and $m_{j}$ denotes the multiplicity of $\nu_{j}$. Isomorphic circulant graphs do not necessarily have a common connection set. For example, $\Cay(\mathbb{Z}_{7},\{1,6\})$ and $\Cay(\mathbb{Z}_{7},\{2,5\})$ are both circuits of length $7$, but they have different connection sets. Since isomorphic graphs are isospectral, one may derive that isospectral circulant graphs do not necessarily share a connection set. However, this does not seem to be the case for integral circulant graphs. After computing the spectra of circulant graphs on less than $100$ vertices with all possible connection sets, So in \cite{so0} proposed the following conjecture.
\begin{conjecture}\emph{(See\cite[Conjecture~7.3]{so0})}
Let $\Cay(\mathbb{Z}_{n},S_{1})$ and $\Cay(\mathbb{Z}_{n},S_{2})$ be two integral circulant graphs. If $S_{1}\neq S_{2}$, then $\spec(\Cay(\mathbb{Z}_{n},S_{1}))\neq\spec(\Cay(\mathbb{Z}_{n},S_{2}))$, hence $\Cay(\mathbb{Z}_{n},S_{1})$ and $\Cay(\mathbb{Z}_{n},S_{2})$ are not isomorphic.
\end{conjecture}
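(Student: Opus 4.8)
The plan is to translate the conjecture into a linear-algebraic and number-theoretic statement about divisor sets, prove the ``easy half'' (injectivity of a labelled eigenvalue map) in full generality, and then confront the genuine difficulty, namely that cospectrality only remembers the eigenvalues as an unlabelled multiset.

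First I would invoke So's characterisation of integral circulant graphs: $\Cay(\mathbb{Z}_{n},S)$ has integral spectrum if and only if $S=\bigcup_{d\in D}G_{n}(d)$ for a uniquely determined set $D$ of divisors $d\mid n$ with $d\neq n$, where $G_{n}(d)=\{k\in\mathbb{Z}_{n}:\gcd(k,n)=d\}$ is the gcd-set. Thus $S_{1}\neq S_{2}$ is equivalent to $D_{1}\neq D_{2}$, and the goal becomes: $D_{1}\neq D_{2}$ forces different spectra. The eigenvalues are Ramanujan sums: writing $c(j,m)=\sum_{1\le k\le m,\ \gcd(k,m)=1}e^{2\pi\mathrm{i}jk/m}$, which depends only on $\gcd(j,m)$, one has
\[
\lambda_{S}(g)=\sum_{d\in D}c\bigl(j,\,n/d\bigr),\qquad \gcd(j,n)=g,
\]
so the distinct eigenvalues are indexed by the divisors $g\mid n$, the eigenvalue $\lambda_{S}(g)$ occurring with intrinsic multiplicity $\phi(n/g)$ (the number of $j$ with $\gcd(j,n)=g$, which is independent of $S$).

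Second, I would assemble the eigenvalues into the divisor-indexed matrix $M=(M_{g,d})$ with $M_{g,d}=c(j,n/d)$ for $\gcd(j,n)=g$, so that the labelled eigenvalue vector is $\lambda_{S}=M\,\mathbf{1}_{D}$. Because Ramanujan sums are multiplicative in the modulus, $M$ is a Kronecker product $\bigotimes_{i}M^{(p_{i},a_{i})}$ of per--prime-power matrices for $n=\prod_{i}p_{i}^{a_{i}}$; each factor $M^{(p,a)}$ has entries in $\{\phi(p^{b}),-p^{b-1},0\}$ arranged in a staircase pattern, and a short computation gives $\det M^{(p,a)}=\pm p^{\binom{a+1}{2}}\neq 0$. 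Hence $M$ is invertible, and after discarding the column $d=n$ the remaining columns stay linearly independent, so $\mathbf{1}_{D}\mapsto\lambda_{S}$ is injective: distinct divisor sets produce distinct \emph{labelled} eigenvalue vectors. I regard these two steps as routine.

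Third comes the reduction that isolates the real difficulty. Cospectrality of $\Cay(\mathbb{Z}_{n},S_{1})$ and $\Cay(\mathbb{Z}_{n},S_{2})$ means the two families agree only as unlabelled multisets, i.e.
\[
\sum_{g\mid n:\ \lambda_{S_{1}}(g)=v}\phi(n/g)=\sum_{g\mid n:\ \lambda_{S_{2}}(g)=v}\phi(n/g)\qquad\text{for every }v.
\]
Since the multiplicities $\phi(n/g)$ do not depend on $S$, in the generic case where all $\lambda_{S_{i}}(g)$ are distinct this forces a permutation $\sigma$ of the divisors with $\lambda_{S_{2}}(\sigma(g))=\lambda_{S_{1}}(g)$ and $\phi(n/\sigma(g))=\phi(n/g)$; that is, a weight-preserving rearrangement $P_{\sigma}$ with $M\mathbf{1}_{D_{2}}=P_{\sigma}M\mathbf{1}_{D_{1}}$. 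The conjecture is therefore equivalent to the assertion that no nontrivial weight-preserving rearrangement can match the eigenvalue vectors of two distinct divisor sets, together with the more delicate analogue when eigenvalues coincide and level sets merge. To attack this crux I would induct on the number $r$ of distinct prime factors of $n$, exploiting the Kronecker structure: the top eigenvalue $\lambda_{S}(n)=|S|$ is simple (multiplicity $\phi(1)=1$) and hence recoverable, and more generally every divisor $g$ whose totient multiplicity $\phi(n/g)$ lies in a singleton weight-class pins down $\sigma$ on that coordinate; peeling these coordinates off and invoking multiplicativity should reduce the case of $r$ prime factors to strictly smaller moduli. The main obstacle---and the reason the conjecture remains open---is precisely the interaction of two phenomena that this induction cannot control uniformly: coincidences among the totient multiplicities $\phi(n/g)$, which enlarge the weight-classes in which $\sigma$ may act, and coincidences among the eigenvalues $\lambda_{S}(g)$, which merge level sets and destroy the clean permutation picture. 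Controlling both simultaneously appears tractable only for restricted factorisation patterns of $n$, where the divisor lattice, the totient values, and the admissible eigenvalue collisions can be enumerated explicitly; handling arbitrary $n$ is the essential open difficulty.
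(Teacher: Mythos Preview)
Your proposal is not a proof, and you candidly say so yourself in the final paragraph: after correctly setting up the problem and disposing of the labelled-eigenvalue injectivity, you identify the genuine obstruction (coincidences among the totient multiplicities $\phi(n/g)$ combined with eigenvalue collisions) and then concede that ``handling arbitrary $n$ is the essential open difficulty.'' That is an accurate assessment of the state of the art, but it is not a proof of the conjecture.

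The paper does not prove this statement either. It is stated as Conjecture~1 and explicitly described as ``still open''; what the paper actually proves is Theorem~\ref{main}, which verifies the conjecture only for four restricted factorisation types of $n$ (certain odd $n$ with very spread-out prime factors, twice such $n$, and $n=p^{3}q$ or $p^{2}q^{2}$). So there is no ``paper's own proof'' of the full conjecture to compare against.

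Where your analysis and the paper's methods connect: your step~2 (invertibility of the Ramanujan-sum matrix and hence injectivity of $D\mapsto(\lambda_{S}(g))_{g\mid n}$) is essentially the paper's Lemmas~\ref{lambdan}--\ref{sameorbiteigen} together with the observation that the labelled spectrum determines $D$. Your step~3 diagnosis---that one must control permutations $\sigma$ of divisors preserving $\phi(n/g)$ while matching eigenvalue level sets---is exactly the mechanism the paper exploits in its special cases, via devices like Lemma~\ref{minusR}, Lemma~\ref{lambdan/2}, Lemma~\ref{mcontradiction}, and the $c$-super-sequence arguments (Lemmas~\ref{supersequence1}--\ref{perphiodd}), all of which work precisely because for those particular $n$ the totient weights are sufficiently separated or the remaining ambiguity is small enough to kill by direct case analysis (including explicit spectral tables). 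Your proposed induction on the number of prime factors, peeling off divisors with unique totient weight, is in the same spirit but, as you note, does not close in general; the paper sidesteps this by restricting $n$ so that the peeling succeeds.
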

With the above notation, Klin and Kov\'{a}cs in \cite{isomorphicconnectionset} pointed out that if $S_{1}\neq S_{2}$, then $\Cay(\mathbb{Z}_{n},S_{1})$ and $\Cay(\mathbb{Z}_{n},S_{2})$ are indeed not isomorphic, which follows directly from a conjecture of Toida\cite{toida}. However, the first part of So's conjecture, that is, the implication $S_{1}\neq S_{2}\Rightarrow\spec(\Cay(\mathbb{Z}_{n},S_{1}))=\spec(\Cay(\mathbb{Z}_{n},S_{2}))$ is still open.

So's conjecture is among studies on isospectrality of graphs and graphs determined by their spectra, which have been extensively studied\cite{3,sap,which,circulant,construction,368,newmethod,cospectral,282}. For more results, we refer the readers to the survey\cite[Section 4]{survey}. For a better description of existing results on integral circulant graphs, we introduce 3 notations and a lemma. Let $n\geq1$ be an integer. Set
$$
[n]=\{1,2,\ldots,n\}.
$$
For convenience, we ignore the distinction between $\mathbb{Z}_{n}$ and $[n]$. Let $d$ be a divisor of $n$. Set
$$
G_{n}(d)=\{j\in[n]:\gcd(j,n)=d\},
$$
where $\gcd(j,n)$ denotes the greatest common divisor of $j$ and $n$.
Besides, for any subset $S$ of $[n]$ which is a union of $G_{n}(d)$'s for some divisors $d$ of $n$, we denote by $\mathcal{D}_{S}$, the set of divisors of $n$ such that
$$
S=\bigcup_{d\in\mathcal{D}_{S}}G_{n}(d).
$$
Note that $\mathcal{D}_{S}$ depends not only on $S$ but on $n$ as well.
\begin{lemma}
\emph{(See\cite[Theorem~7.1]{so0})}
\label{integrality}
A circulant graph $\Cay(\mathbb{Z}_{n},S)$ is integral if and only if $S$ is a union of $G_{n}(d)$'s for some divisors $d$ of $n$.
\end{lemma}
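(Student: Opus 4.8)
The plan is to work directly with the eigenvalues of a circulant graph. Since $\mathbb{Z}_{n}$ is abelian, the eigenvalues of $\Cay(\mathbb{Z}_{n},S)$ are indexed by the characters of $\mathbb{Z}_{n}$, giving $\lambda_{j}=\sum_{s\in S}\omega^{js}$ for $j\in\{0,1,\ldots,n-1\}$, where $\omega=\me^{2\pi\mi/n}$ is a primitive $n$-th root of unity. Thus the graph is integral precisely when every $\lambda_{j}$ is a rational integer, and I would handle the two implications of the lemma separately, each resting on the behaviour of the partition $\{G_{n}(d):d\mid n\}$ of $\mathbb{Z}_{n}$ under multiplication by units.

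For the ``if'' direction I would decompose $S=\bigcup_{d\in\mathcal{D}_{S}}G_{n}(d)$ and show that each block contributes an integer to $\lambda_{j}$. Writing $s=dt$ with $\gcd(t,n/d)=1$ turns the block sum into a Ramanujan sum
\[
\sum_{s\in G_{n}(d)}\omega^{js}=\sum_{\substack{1\le t\le n/d\\ \gcd(t,n/d)=1}}\eta^{jt},\qquad \eta=\omega^{d},
\]
where $\eta$ is a primitive $(n/d)$-th root of unity. Since the index set $\{t:\gcd(t,n/d)=1\}$ is invariant under multiplication by units of $\mathbb{Z}_{n/d}$, this sum is fixed by $\mathrm{Gal}(\mathbb{Q}(\eta)/\mathbb{Q})\cong(\mathbb{Z}_{n/d})^{\ast}$, hence is a rational algebraic integer, i.e.\ an ordinary integer; summing over the blocks shows every $\lambda_{j}\in\mathbb{Z}$.

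For the ``only if'' direction I would exploit the Galois action on the eigenvalues. For $a$ coprime to $n$ let $\sigma_{a}\in\mathrm{Gal}(\mathbb{Q}(\omega)/\mathbb{Q})$ send $\omega\mapsto\omega^{a}$; then $\sigma_{a}(\lambda_{j})=\sum_{s\in S}\omega^{ajs}=\lambda_{aj}$. Integrality of all $\lambda_{j}$ forces $\sigma_{a}(\lambda_{j})=\lambda_{j}$, so $\lambda_{aj}=\lambda_{j}$ for every unit $a$ and every $j$. Applying Fourier inversion $\chi_{S}(s)=\tfrac{1}{n}\sum_{j}\lambda_{j}\omega^{-js}$ to the indicator function $\chi_{S}$ of $S$ and reindexing $j\mapsto a^{-1}j$ then yields $\chi_{S}(as)=\chi_{S}(s)$ for all units $a$, i.e.\ $S$ is invariant under multiplication by $(\mathbb{Z}_{n})^{\ast}$. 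Because two elements of $\mathbb{Z}_{n}$ lie in the same orbit of this action exactly when they have the same gcd with $n$, the orbits are precisely the sets $G_{n}(d)$, and an invariant set is necessarily a union of them.

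The main obstacle I anticipate is the converse, and specifically the orbit identification underpinning it: one must verify that the $(\mathbb{Z}_{n})^{\ast}$-orbit of an element $s$ with $\gcd(s,n)=d$ is all of $G_{n}(d)$. The inclusion into $G_{n}(d)$ is immediate since $\gcd(as,n)=\gcd(s,n)$ for a unit $a$, but surjectivity requires lifting a prescribed residue in $(\mathbb{Z}_{n/d})^{\ast}$ to a unit of $\mathbb{Z}_{n}$, i.e.\ the surjectivity of the reduction map $(\mathbb{Z}_{n})^{\ast}\to(\mathbb{Z}_{n/d})^{\ast}$. This is a standard number-theoretic fact, but it is the point where the argument genuinely uses the arithmetic of $n$ rather than formal manipulation, so I would isolate it as the crux of the proof.
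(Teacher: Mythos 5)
The paper states this lemma without proof, citing So \cite[Theorem~7.1]{so0}, and your argument is correct and essentially the standard one used there: the ``if'' direction via block sums over $G_{n}(d)$ being Ramanujan sums, i.e.\ Galois-fixed rational algebraic integers (the same mechanism behind equations (\ref{ramanujan}) and (\ref{lambdaphimu}) in the paper), and the ``only if'' direction via the Galois action $\sigma_{a}(\lambda_{j})=\lambda_{aj}$ combined with Fourier inversion to get invariance of $S$ under multiplication by units. You also correctly isolate the one genuinely arithmetic step, the surjectivity of the reduction map $(\mathbb{Z}_{n})^{\ast}\rightarrow(\mathbb{Z}_{n/d})^{\ast}$, which is exactly what identifies the unit orbits with the sets $G_{n}(d)$.
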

Let $\Cay(\mathbb{Z}_{n},S)$ be an integral circulant graph. By Lemma \ref{integrality}, we have $\Cay(\mathbb{Z}_{n},S)=\Cay(\mathbb{Z}_{n},\bigcup_{d\in\mathcal{D}_{S}}G_{n}(d))$, which is determined by $n$ and $\mathcal{D}_{S}$. Hence, we denote an integral circulant graph $\Cay(\mathbb{Z}_{n},S)$ by $\ICG(n,\mathcal{D}_{S})$ for convenience. There hasn't been much progress in research on So's conjecture so far. We collected previous results on So's conjecture for integral circulant graphs in the following theorem.
\begin{theorem}\label{knownresult}
Let $n\geq1$ be an integer. Let $\ICG(n,\mathcal{D}_{S_{1}})$ and $\ICG(n,\mathcal{D}_{S_{2}})$ be two integral circulant graphs. $\spec(\ICG(n,\mathcal{D}_{S_{1}}))=\spec(\ICG(n,\mathcal{D}_{S_{2}}))$ implies $S_{1}=S_{2}$ if one of the following conditions is satisfied.
\begin{itemize}
\item[\rm (a)]$n=p^{k}$ or $n=pq$ with primes $2\leq p<q$ and $k\geq1$.\cite{so0}
\item[\rm (b)]$n=pq^{k}$ or $n=p^{2}q$ with primes $2\leq p<q$ and $k\geq1$.\cite{sap}
\item[\rm (c)]$n$ is square-free and both $\mathcal{D}_{S_{1}}$ and $\mathcal{D}_{S_{2}}$ contain exactly 2 prime factors of $n$.\cite{squarefree}
\item[\rm (d)]$n=pqr$ with primes $p<q<r$.\cite{howmany}
\end{itemize}

\end{theorem}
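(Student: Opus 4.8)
The plan is to convert the spectral condition into a statement about a single invertible integer matrix and then to control, case by case, the only ambiguity that the matrix does not already resolve. First I would write the eigenvalues explicitly. With $\omega=\me^{2\pi\mi/n}$, the eigenvalue of $\ICG(n,\mathcal{D}_S)$ at index $k$ is $\sum_{d\in\mathcal{D}_S}\sum_{j\in G_n(d)}\omega^{kj}$, whose inner sum is the Ramanujan sum $c(k,n/d)=\mu(m/g)\,\varphi(m)/\varphi(m/g)$ with $m=n/d$ and $g=\gcd(k,m)$. Since $c(k,m)$ depends only on $\gcd(k,m)$ and, for $m\mid n$, one has $\gcd(k,m)=\gcd(\gcd(k,n),m)$, the eigenvalue depends only on $e=\gcd(k,n)$; denote it $\lambda_{\mathcal{D}_S}(e)$. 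The index set $\{k:\gcd(k,n)=e\}=G_n(e)$ has size $\varphi(n/e)$, so $\lambda_{\mathcal{D}_S}(e)$ contributes multiplicity $\varphi(n/e)$. Hence $\spec(\ICG(n,\mathcal{D}_S))$ is exactly the multiset of pairs $\bigl(\lambda_{\mathcal{D}_S}(e),\varphi(n/e)\bigr)$ over divisors $e\mid n$.

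Next I would linearize. Writing $x_d\in\{0,1\}$ for the indicator of $d\in\mathcal{D}_S$ and choosing the representative $k=e$, we get $\lambda_{\mathcal{D}_S}(e)=\sum_{d\mid n}c(e,n/d)\,x_d$, i.e. $\lambda=Mx$ with $M=\bigl[c(e,n/d)\bigr]_{e,d\mid n}$. By multiplicativity of Ramanujan sums in the modulus and the product structure of the divisor lattice of $n=\prod_i p_i^{a_i}$, the matrix $M$ is the Kronecker product $\bigotimes_i M_i$ of the prime-power matrices $M_i$, each of which is invertible (its determinant is, up to sign, a power of $p_i$); so $M$ is invertible over $\mathbb{Q}$. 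Consequently, if the spectrum handed us the ordered assignment $e\mapsto\lambda_{\mathcal{D}_S}(e)$, then $x$, and therefore $S$, would be determined. The sole obstruction is that the spectrum only records the unordered multiset $\{(\lambda(e),\varphi(n/e))\}$: two divisors $e_1\ne e_2$ with $\varphi(n/e_1)=\varphi(n/e_2)$ could have their $\lambda$-values interchanged between $\ICG(n,\mathcal{D}_{S_1})$ and $\ICG(n,\mathcal{D}_{S_2})$ without changing the spectrum. So's conjecture in each listed case thus reduces to showing that no such nontrivial interchange among equal-multiplicity divisors is realized by two distinct admissible vectors $x$.

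The four conditions differ only in how severely the multiplicity function $e\mapsto\varphi(n/e)$ collides. In case (a) with $p$ odd the values $\varphi(n/e)$ are pairwise distinct over $e\mid n$, so the matching is forced and invertibility of $M$ gives $S_1=S_2$ at once; the even subcases $n=2^k$ and $n=2q$ produce the first small coincidences (such as $\varphi(1)=\varphi(2)$), which are removed either by noting that the eigenvalue at $e=n$ is always the vertex degree $\sum_{d\in\mathcal{D}_S}\varphi(n/d)$ and hence extremal, or by a direct short comparison of the remaining swapped functionals. In cases (b) $n=pq^k,p^2q$, (c) square-free $n$ with $\mathcal{D}_{S_1},\mathcal{D}_{S_2}$ each using exactly two prime factors, and (d) $n=pqr$, genuine collisions occur for suitable primes (for instance $\varphi(pq)=\varphi(r)$ when $r-1=(p-1)(q-1)$). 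Here I would list the finitely many divisor pairs on which the multiplicities agree, and for each candidate permutation equate the two spectra and derive a contradiction from the explicit Ramanujan values together with monotonicity and factorization properties of $\varphi$; the restriction in (c) to sets with exactly two prime factors is precisely what prunes the admissible permutations to a checkable list.

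The main obstacle is exactly this collision analysis in (b)--(d). Once $\varphi(n/e_1)=\varphi(n/e_2)$ for some $e_1\ne e_2$, invertibility of $M$ no longer closes the argument, and one must rule out each eigenvalue interchange by hand, using delicate arithmetic identities for Ramanujan sums and for $\varphi$. This is why the conjecture is still open in general: the number of divisors sharing a common multiplicity, and hence the number of potential interchanges to defeat, grows rapidly with both the count of distinct prime factors and the sizes of the exponents, so each new prime-factorization type demands its own bookkeeping rather than following from a single uniform estimate.
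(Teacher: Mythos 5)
First, note that the paper does not prove this theorem at all: Theorem \ref{knownresult} is a survey statement, each case being cited to the literature ((a) to So \cite{so0}, (b) to \cite{sap}, (c) to \cite{squarefree}, (d) to \cite{howmany}), so your attempt can only be compared against those arguments and the machinery this paper assembles in Section \ref{preliminaries}. Measured against that, your proposal has a genuine gap at its central reduction step. You claim that, since the Ramanujan-sum matrix $M=[\mathcal{R}_{n/d}(e)]_{e,d\mid n}$ is invertible, ``the sole obstruction is that two divisors $e_{1}\neq e_{2}$ with $\phi(n/e_{1})=\phi(n/e_{2})$ could have their $\lambda$-values interchanged.'' That is false. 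Isospectrality does not hand you a bijection between divisor classes: an eigenvalue $\nu$ is attained on a level set $\mathcal{L}_{S}(\nu)$ which is a \emph{union} of classes $G_{n}(e)$ (Corollary \ref{orbiteigen}), and all the spectrum gives you, via (\ref{mj1}), is the equality of merged multiplicities
\begin{equation*}
\sum_{e\in E_{1}(\nu)}\phi(n/e)\;=\;\sum_{e\in E_{2}(\nu)}\phi(n/e)
\end{equation*}
for the (possibly multi-element, possibly completely different) divisor sets $E_{i}(\nu)$ on which each graph attains $\nu$. So the ambiguity to be excluded is \emph{subset-sum} collisions among the values $\phi(n/e)$, not merely pairwise equalities. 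In particular your conclusion for case (a) with $p$ odd --- that pairwise distinctness of the $\phi(n/e)$ forces the matching, whence invertibility of $M$ finishes --- does not follow: one graph could attain $\nu$ on a single class while the other attains it on a union of two classes of the same total $\phi$-mass. What actually closes $n=p^{k}$, $p$ odd, is the super-sequence inequality $\phi(p^{t})>\sum_{j=0}^{t-1}\phi(p^{j})$ (immediate from (\ref{propertyofphi1}) for $p\geq3$, and the mechanism of Lemma \ref{supersequence1}); for $p=2$ one has the exact collision $\phi(2^{t})=\sum_{j=0}^{t-1}\phi(2^{j})$, showing these subset-sum coincidences are real and your pairwise-swap picture misses them.

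Beyond this, for cases (b)--(d) your text is a plan rather than a proof: ``list the finitely many divisor pairs on which the multiplicities agree and derive a contradiction'' is precisely the substantive content of \cite{sap}, \cite{squarefree} and \cite{howmany}, and in the corrected (subset-sum) formulation the candidate coincidences are not confined to a short list of pairs. Ruling them out requires tools of the kind this paper codifies --- comparing level sets after removing indices with forced eigenvalue agreement (Lemma \ref{minusR}), parity of $|\mathcal{L}_{S}(\nu)\setminus\{n\}|$ to pin down $\lambda_{n/2}$ (Lemma \ref{lambdan/2}), Möbius evaluations at $k=1$, and counting arguments like Lemma \ref{mcontradiction} --- none of which your proposal supplies. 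The Kronecker/invertibility observation about $M$ is correct and is a reasonable starting point, but as written the proof does not go through even in case (a).
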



In this work, we continue to study on So's conjecture and verify $4$ cases where isospectrality implies sharing a connection set for integral circulant graphs. Here is our main theorem.
\begin{theorem}\label{main}
Let $n\geq1$ be an integer. Let $\ICG(n,\mathcal{D}_{S_{1}})$ and $\ICG(n,\mathcal{D}_{S_{2}})$ be two integral circulant graphs. $\spec(\ICG(n,\mathcal{D}_{S_{1}}))=\spec(\ICG(n,\mathcal{D}_{S_{2}}))$ implies $S_{1}=S_{2}$ if one of the following conditions is satisfied.
\begin{itemize}
\item[\rm (a)]$n\geq1$ is an odd integer with prime factorisation $n=p_{1}^{J_{1}}p_{2}^{J_{2}}\cdots p_{s}^{J_{s}}$ where $s\geq2$ and $\forall r\in[s-1]$, $\prod_{i=1}^{r}p_{i}^{J_{i}}<p_{r+1}$.

\item[\rm (b)]$n\geq1$ is an even integer with prime factorisation $n=2p_{1}^{J_{1}}p_{2}^{J_{2}}\cdots p_{s}^{J_{s}}$ where $s\geq2$ and $\forall r\in[s-1]$, $\prod_{i=1}^{r}p_{i}^{J_{i}}<p_{r+1}$.
\item[\rm (c)]$n=p^{3}q$ with primes $2\leq p<q$.
\item[\rm (d)]$n=p^{2}q^{2}$ with primes $2\leq p<q$.
\end{itemize}
\end{theorem}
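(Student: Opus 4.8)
The plan is to work entirely with the explicit spectrum of an integral circulant graph. Writing $\omega=e^{2\pi i/n}$, the eigenvalues of $\ICG(n,\mathcal{D}_S)$ are $\lambda_j=\sum_{d\in\mathcal{D}_S}\sum_{x\in G_n(d)}\omega^{jx}$, and since $\sum_{x\in G_n(d)}\omega^{jx}$ is the Ramanujan sum $c(j,n/d)$, which depends on $j$ only through $t:=\gcd(j,n)$, the spectrum is encoded by the single function $\Lambda_S(t)=\sum_{d\in\mathcal{D}_S}c(t,n/d)$ on the divisors $t\mid n$, where $\Lambda_S(t)$ occurs with multiplicity $\phi(n/t)$. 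Two facts drive everything. First, Ramanujan sums are multiplicative, so through the CRT isomorphism $\mathbb{Z}_n\cong\prod_i\mathbb{Z}_{p_i^{J_i}}$ the value $c(t,n/d)$ factors into local contributions $\prod_i c(v_{p_i}(t),\dots)$ indexed by the prime powers of $n$; this gives the whole problem a product structure over the divisor lattice. Second --- and this is the engine of the inversion --- the length-$n$ eigenvalue vector $(\lambda_0,\dots,\lambda_{n-1})$ is exactly the discrete Fourier transform of the indicator vector of $S$, so knowing $\Lambda_S(t)$ at \emph{every} divisor $t$ recovers $S$ uniquely. Consequently it suffices to prove that isospectrality forces $\Lambda_{S_1}(t)=\Lambda_{S_2}(t)$ for all $t\mid n$.

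Now I reduce the hypothesis. Isospectrality of $\ICG(n,\mathcal{D}_{S_1})$ and $\ICG(n,\mathcal{D}_{S_2})$ says precisely that the two multisets $\{(\Lambda_{S_i}(t),\phi(n/t)):t\mid n\}$ coincide. Grouping divisors according to the value of the multiplicity $\phi(n/t)$, the only way $\Lambda_{S_1}$ and $\Lambda_{S_2}$ can differ is by a rearrangement of eigenvalues among divisors lying in a common multiplicity class, i.e. divisors $t\neq t'$ with $\phi(n/t)=\phi(n/t')$. Hence the entire difficulty is to understand these \emph{multiplicity collisions} and to show no nontrivial rearrangement is compatible with the data; once every collision is resolved we get $\Lambda_{S_1}=\Lambda_{S_2}$ and conclude by the Fourier inversion above.

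The four cases differ only in how these collisions are controlled. For (a), I would first prove that the super-increasing hypothesis $\prod_{i=1}^r p_i^{J_i}<p_{r+1}$ makes $t\mapsto\phi(n/t)$ \emph{injective} on the divisors of $n$: writing $n/t=\prod_i p_i^{e_i}$, the largest prime power present dominates $\phi(n/t)=\prod_i p_i^{e_i-1}(p_i-1)$ so strongly that the exponents $e_i$ can be read off from the size of $\phi(n/t)$ one prime at a time, from the top down. With injectivity there are no collisions at all, so isospectrality gives $\Lambda_{S_1}=\Lambda_{S_2}$ immediately. Case (b), $n=2M$ with $M$ odd, is the same computation except for the unavoidable identity $\phi(2m)=\phi(m)$: now the collisions are exactly the pairs $\{t,2t\}$ with $t\mid M$, and nothing else. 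For (c) $n=p^3q$ and (d) $n=p^2q^2$ the divisor lattice is small (eight, resp. nine, divisors), so I would simply enumerate all coincidences $\phi(n/t)=\phi(n/t')$; each reduces to a Diophantine condition such as $q=p^2-p+1$, or the genuinely recurring phenomenon $p=2$ (where $\phi(2q)=\phi(q)$ reappears), and there are only finitely many collision patterns to treat.

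The main obstacle, and the technical heart of the argument, is to rule out a nontrivial swap \emph{within} a genuine collision class --- this is where (b), (c) and (d) require real work rather than bookkeeping. Here I would use the actual eigenvalue \emph{values}, not merely their multiplicities: exploiting the multiplicative factorisation of $c(t,n/d)$, one compares $\Lambda_S(t)$ with $\Lambda_S(t')$ for colliding $t,t'$ and shows that, whatever $\mathcal{D}_S$ is, these two quantities are tied together by a rigid relation (for the pair $\{t,2t\}$ in (b), the local factor at the prime $2$ differs by a controlled sign, so a swap would change a determined linear combination of the eigenvalues). Feeding this back into the multiset equality forces the two connection sets to agree on the colliding divisors as well. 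I expect the $\{t,2t\}$ pairs of case (b) to demand a single uniform lemma, while cases (c) and (d) will need a short but careful analysis of the sporadic collisions, each handled by writing out the relevant Ramanujan sums explicitly and comparing. Once all collisions are resolved, $\Lambda_{S_1}=\Lambda_{S_2}$ everywhere and the Fourier-inversion step of the first paragraph yields $S_1=S_2$.
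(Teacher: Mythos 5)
Your reduction in the second paragraph contains the decisive gap. Isospectrality does \emph{not} say that the two multisets of pairs $\{(\Lambda_{S_i}(t),\phi(n/t)):t\mid n\}$ coincide; it only says that for every eigenvalue value $v$ the totals $\sum_{t:\Lambda_{S_1}(t)=v}\phi(n/t)$ and $\sum_{t:\Lambda_{S_2}(t)=v}\phi(n/t)$ agree. Since the multiplicity of an eigenvalue is a \emph{subset sum} of the numbers $\phi(n/t)$, equal totals do not force equal index sets, and a discrepancy need not be ``a rearrangement of eigenvalues among divisors lying in a common multiplicity class'': a value attained by $S_1$ on one divisor with $\phi(n/t_1)=4$ can be matched by $S_2$ attaining it on two divisors with $\phi(n/t_2)=\phi(n/t_3)=2$, and so on. For the same reason your case (a) argument fails even on its own terms: injectivity of $t\mapsto\phi(n/t)$ rules out pairwise collisions but not additive relations such as $\phi(n/t_1)+\phi(n/t_2)=\phi(n/t_3)$, so ``with injectivity there are no collisions at all, hence $\Lambda_{S_1}=\Lambda_{S_2}$ immediately'' does not follow. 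What is actually needed, and what the paper proves via the $c$-super-sequence machinery (Lemmas \ref{supersequence1}--\ref{perphiodd}), is the much stronger statement that under the super-increasing hypothesis \emph{all} subset sums $\sum_{d\in\mathcal{D}}\phi(n/d)$, $\mathcal{D}\subseteq\mathcal{D}_{[n]}$, are pairwise distinct. Note the payoff: the paper then concludes case (a) from the degree $\lambda_n$ alone, with no need to invert the whole spectrum, and case (b) from $\lambda_n$ together with $\lambda_{n/2}$, where $\lambda_{n/2}(S_1)=\lambda_{n/2}(S_2)$ is itself extracted by a parity count (Lemma \ref{lambdan/2}: $n/2$ is the unique proper divisor with $\phi(n/d)$ odd), not by any class-preserving permutation.

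The same misdiagnosis makes your plan for (c) and (d) unworkable as described. The hard configurations there are not pairwise coincidences $\phi(n/t)=\phi(n/t')$ but subset-sum coincidences in the degree equation: for $n=p^2q^2$, the possibility $\mathcal{D}_{S_1}\setminus\mathcal{D}_{S_2}=\{p\}$, $\mathcal{D}_{S_2}\setminus\mathcal{D}_{S_1}=\{q,q^2,p^2\}$ reduces to the Diophantine equation $(p-2)(q-1)=p(p-1)$ with the sporadic solution $p=3$, $q=7$, which the paper can only eliminate by explicitly comparing spectra for $n=3^2 7^2$ (Table \ref{table3272}, via Lemma \ref{3272}); analogous finite tables are needed for $n=2^3\cdot 3$ and $n=2^2 3^2$. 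The paper's route also uses ingredients your outline has no substitute for: $n/p\notin\mathcal{D}_{S_1}\triangle\mathcal{D}_{S_2}$ for odd primes $p$ (imported from M\"onius's thesis, Lemma \ref{n/pnotintriangle}); the complementation trick exploiting regularity (Corollary \ref{regularisocor}); the equality $\lambda_1(S_1)=\lambda_1(S_2)$ obtained from a M\"obius-function evaluation, valid only when $\phi(n)>n/2$; and a quantitative multiplicity-counting lemma (Lemma \ref{mcontradiction}) that pins down individual values $\lambda_{d_0}(S_1)=\lambda_{d_0}(S_2)$ by volume estimates on $\sum\phi(n/d)$ rather than by permutation structure. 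Your closing Fourier-inversion step is correct but is never the issue; everything feeding into it would have to be rebuilt along the lines above.
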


Our paper is organized as follows. In Section \ref{preliminaries}, we introduce notations, definitions and useful results which will play important roles throughout the work. In Section \ref{proofofmaintheorem}, we give a proof of Theorem \ref{main}, which consists of Theorems \ref{maina} and \ref{mainb} in Subsection \ref{sub1} and Theorems \ref{233}, \ref{23q}, \ref{p3q}, \ref{2232}, \ref{22q2} and \ref{p2q2} in Subsection \ref{sub2}. In Section \ref{conclusion}, we conclude our work.


\section{Preliminaries}\label{preliminaries}
In this section, we introduce notations, definitions and useful results which will play important roles throughout the work.
\begin{itemize}
\item Let $n\geq1$ be an integer. Let $d$ be a divisor of $n$. We have
$$
G_{n}(d)=d\cdot G_{n/d}(1),
$$
where $d\cdot G_{n/d}(1)=\{dj:j\in G_{n/d}(1)\}$. What follows is that
$$
|G_{n}(d)|=|d\cdot G_{n/d}(1)|=|G_{n/d}(1)|=\phi(n/d),
$$
where $\phi$ is the \emph{Euler totient function} given by
\begin{align*}
\phi(k)=\left\{
\begin{array}{ll}
k\prod_{i=1}^{s}(1-\frac{1}{p_{i}}), & \text{if $k=p_{1}^{J_{1}}p_{2}^{J_{2}}\cdots p_{s}^{J_{s}}\geq2$ with primes $p_{1}<p_{2}<\cdots<p_{s}$},\\[0.2cm]
1, & \text{if $k=1$},
\end{array}
\right.
\end{align*}
for any integer $k\geq1$. Note that $\forall k\geq3$, $2|\phi(k)$. It is known (See\cite[Page~244, Theorem~7.7]{numbertheory}) that
\begin{equation}\label{phiproperty}
n=\sum_{d\in\mathcal{D}_{[n]}}\phi(d)=\sum_{d\in\mathcal{D}_{[n]}}\phi(n/d),
\end{equation}
where
$$\mathcal{D}_{[n]}=\{d\in[n]:d|n\}$$
according to our previous definition of $\mathcal{D}_{S}$ in Section \ref{introduction}.

\item Let $n\geq1$ be an integer. Let $S$ be a subset of $[n]$ such that $S$ is a union of $G_{n}(d)$'s for some divisors $d$ of $n$. We have
    \begin{equation}
    \label{Scard}
    |S|=|\bigcup_{d\in\mathcal{D}_{S}}G_{n}(d)|=\sum_{d\in\mathcal{D}_{S}}|G_{n}(d)|=\sum_{d\in\mathcal{D}_{S}}\phi(n/d).
    \end{equation}

\item Let $n\geq1$ be an integer. Set
$$
\{0,[n]\}=\{0,1,2,\ldots,n\}.
$$

\item Let $Y$ be a subset of $X$. Set $\chi_{Y}:X\rightarrow\{0,1\}$, such that
\begin{align*}
\chi_{Y}(x)=\left\{
\begin{array}{ll}
1, & \mathrm{if}\;x\in Y,\\[0.2cm]
0, & \mathrm{if}\;x\notin Y.
\end{array}
\right.
\end{align*}

\item Let $n\geq1$ be an integer. Set
$$
\omega_{n}=e^{2\pi\iota/n},
$$
where $\iota=\sqrt{-1}$ is the imaginary unit.
\end{itemize}

\begin{lemma}
\label{eigenvaluelambda}\emph{(See\cite[Corollary~3.2]{babai})}
The eigenvalues of $\Cay(\mathbb{Z}_{n},S)$ are given by $\lambda_{k}(S)$ for each $k\in[n]$, where $\lambda_{k}(S)$ are defined as
$$
\lambda_{k}(S)=\sum_{g\in[n]}\chi_{S}(g)\omega_{n}^{kg}=\sum_{g\in S}\omega_{n}^{kg}.
$$
\end{lemma}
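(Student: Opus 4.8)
The plan is to recognise the adjacency matrix of $\Cay(\mathbb{Z}_{n},S)$ as a circulant matrix and to verify directly that the discrete Fourier vectors form a complete system of eigenvectors. Index the rows and columns of the adjacency matrix $A$ by the elements of $\mathbb{Z}_{n}$ (identified with $[n]$, so that $n$ plays the role of $0$). By the definition of a Cayley graph, there is an edge from $g$ to $h$ exactly when $h-g\in S$, so the entry of $A$ in position $(g,h)$ equals $\chi_{S}(h-g)$, with the difference taken modulo $n$. In particular $A_{g,h}$ depends only on $h-g \bmod n$, which is the defining property of a circulant matrix.

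First I would fix $k\in[n]$ and introduce the candidate eigenvector $v_{k}$ whose $h$-th coordinate is $\omega_{n}^{hk}$. Computing the $g$-th coordinate of $Av_{k}$ and then substituting $g'=h-g$ (so that $h=g+g'$, and as $h$ runs over a complete residue system so does $g'$) gives
\begin{equation*}
(Av_{k})_{g}=\sum_{h\in[n]}\chi_{S}(h-g)\,\omega_{n}^{hk}=\sum_{g'\in[n]}\chi_{S}(g')\,\omega_{n}^{(g+g')k}=\omega_{n}^{gk}\sum_{g'\in S}\omega_{n}^{g'k}=\lambda_{k}(S)\,(v_{k})_{g},
\end{equation*}
where I use $\omega_{n}^{nk}=1$ so that exponents only matter modulo $n$. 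Hence $v_{k}$ is an eigenvector of $A$ with eigenvalue $\lambda_{k}(S)=\sum_{g'\in S}\omega_{n}^{g'k}$, matching the stated formula.

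It remains to argue that the numbers $\lambda_{k}(S)$ for $k\in[n]$ constitute \emph{all} the eigenvalues counted with multiplicity. For this I would observe that the $n$ vectors $v_{1},\ldots,v_{n}$ are linearly independent: the matrix whose columns are the $v_{k}$ has $(h,k)$-entry $\omega_{n}^{hk}=(\omega_{n}^{k})^{h}$, so it is, up to a permutation of rows and a scaling of each column by $\omega_{n}^{k}$, the Vandermonde matrix of the $n$ distinct $n$-th roots of unity $\omega_{n}^{1},\ldots,\omega_{n}^{n}$, hence invertible. Therefore $\{v_{k}:k\in[n]\}$ is a basis of $\mathbb{C}^{n}$ consisting of eigenvectors of $A$, so $A$ is diagonalisable and its full eigenvalue list (with multiplicity) is exactly $\lambda_{1}(S),\ldots,\lambda_{n}(S)$.

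There is no genuine obstacle here: the statement is a direct eigenvalue verification for circulant matrices, and the symmetry of $S$ (which forces $A$ to be symmetric and the $\lambda_{k}(S)$ to be real) is not even needed for the claim as worded. The only point demanding care is the bookkeeping forced by the convention $[n]=\{1,\ldots,n\}$ in place of $\{0,\ldots,n-1\}$: one must keep all exponents and all differences interpreted modulo $n$, using $\omega_{n}^{n}=1$, and check that the substitution $g'=h-g$ is a bijection of $[n]$ onto itself so that the inner sum collapses cleanly to $\sum_{g'\in S}\omega_{n}^{g'k}$.
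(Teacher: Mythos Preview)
Your proof is correct and is the standard diagonalisation of a circulant matrix via discrete Fourier vectors. Note, however, that the paper does not supply its own proof of this lemma: it simply cites \cite[Corollary~3.2]{babai} and moves on. So there is no ``paper's proof'' to compare against; you have filled in what the authors treat as a black-box reference, and the argument you give is exactly the one that underlies Babai's more general result when specialised to the abelian case $G=\mathbb{Z}_n$.
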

By Lemma \ref{eigenvaluelambda}, the following lemma is obvious.
\begin{lemma}\label{lambdan}
Let $\Cay(\mathbb{Z}_{n},S_{1})$ and $\Cay(\mathbb{Z}_{n},S_{2})$ be isospectral circulant graphs. Then
$$
\lambda_{n}(S_{1})=\lambda_{n}(S_{2}).
$$
\end{lemma}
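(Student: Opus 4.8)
The plan is to show that the special eigenvalue $\lambda_{n}(S)$ is in fact nothing but the degree $|S|$ of the regular graph $\Cay(\mathbb{Z}_{n},S)$, and that this quantity is the largest eigenvalue and hence recoverable from the spectrum alone. Everything then follows because isospectral graphs share the same multiset of eigenvalues, so in particular their maximal eigenvalues coincide.

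First I would evaluate $\lambda_{n}(S)$ explicitly by means of Lemma \ref{eigenvaluelambda}. Since $\omega_{n}^{ng}=e^{2\pi\iota ng/n}=e^{2\pi\iota g}=1$ for every integer $g$, we obtain
$$
\lambda_{n}(S)=\sum_{g\in S}\omega_{n}^{ng}=\sum_{g\in S}1=|S|.
$$
Thus $\lambda_{n}(S_{i})=|S_{i}|$ for $i\in\{1,2\}$, and the desired equality reduces to verifying $|S_{1}|=|S_{2}|$. Next I would check that $|S|$ is the maximum eigenvalue of $\Cay(\mathbb{Z}_{n},S)$: by the triangle inequality $|\lambda_{k}(S)|=\left|\sum_{g\in S}\omega_{n}^{kg}\right|\leq\sum_{g\in S}|\omega_{n}^{kg}|=|S|$ for every $k\in[n]$, so the value $\lambda_{n}(S)=|S|$, being real, nonnegative, and dominating every eigenvalue in absolute value, is the largest element of $\spec(\Cay(\mathbb{Z}_{n},S))$.

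Finally, since $\Cay(\mathbb{Z}_{n},S_{1})$ and $\Cay(\mathbb{Z}_{n},S_{2})$ are isospectral, the multisets $\spec(\Cay(\mathbb{Z}_{n},S_{1}))$ and $\spec(\Cay(\mathbb{Z}_{n},S_{2}))$ coincide, and in particular their largest elements agree; this yields $|S_{1}|=\lambda_{n}(S_{1})=\lambda_{n}(S_{2})=|S_{2}|$, as required. (As an alternative to the maximality argument one could instead invoke $\sum_{k}\lambda_{k}(S)^{2}=\trace(A^{2})=n|S|$, which is again a spectral invariant forcing $|S_{1}|=|S_{2}|$.) There is no genuine obstacle here, which is why the authors call the claim obvious: the entire content is the observation that $\lambda_{n}$ returns the vertex degree. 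The only point requiring a moment's care is to confirm that $\lambda_{n}(S)$ is the \emph{maximal} eigenvalue rather than merely \emph{an} eigenvalue, so that equality of the two spectra forces equality of this particular value.
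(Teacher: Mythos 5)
Your proof is correct and fills in exactly the argument the paper leaves implicit when it declares the lemma ``obvious'' by Lemma \ref{eigenvaluelambda}: $\lambda_{n}(S)=|S|$ is the degree, it dominates every eigenvalue by the triangle inequality, and hence it is the maximum of the spectrum, a spectral invariant. Your care in checking maximality (rather than treating $\lambda_{n}(S)$ as merely an eigenvalue) is precisely the one point worth making explicit, so nothing is missing.
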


\begin{itemize}
\item Let $\Cay(\mathbb{Z}_{n},S)$ be a circulant graph. Let $\alpha$ be an eigenvalue of $\Cay(\mathbb{Z}_{n},S)$. Set
    $$
    \mathcal{L}_{S}(\alpha)=\{k\in[n]:\lambda_{k}(S)=\alpha\}.
    $$
    Given
    \begin{align*}
    \spec(\Cay(\mathbb{Z}_{n},S))=\left(
    \begin{array}{cccc}
    \nu_{1} & \nu_{2} & \ldots & \nu_{J} \\
    m_{1} & m_{2} & \ldots & m_{J}
    \end{array}
    \right),
    \end{align*}
    by Lemma \ref{eigenvaluelambda}, we have $\forall j\in[J]$,
    \begin{equation}\label{mj1}
    m_{j}=|\{k\in[n]:\lambda_{k}(S)=\nu_{j}\}|=|\mathcal{L}_{S}(\nu_{j})|.
    \end{equation}
\end{itemize}
\begin{lemma}\label{minusR}
Let $\Cay(\mathbb{Z}_{n},S_{1})$ and $\Cay(\mathbb{Z}_{n},S_{2})$ be isospectral circulant graphs sharing the spectrum
\begin{align*}
\left(
\begin{array}{cccc}
\nu_{1} & \nu_{2} & \ldots & \nu_{J} \\
m_{1} & m_{2} & \ldots & m_{J}
\end{array}
\right).
\end{align*}
Let $R$ be a subset of $[n]$ such that $\forall k\in R$, $\lambda_{k}(S_{1})=\lambda_{k}(S_{2})$. Then $\forall j\in[J]$,
$$
|\mathcal{L}_{S_{1}}(\nu_{j})\setminus R|=|\mathcal{L}_{S_{2}}(\nu_{j})\setminus R|.
$$
\end{lemma}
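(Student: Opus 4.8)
The plan is to reduce the claim to a simple counting identity, exploiting two facts: isospectrality pins down the total size $|\mathcal{L}_{S_i}(\nu_j)|$ independently of $i$, while the defining property of $R$ forces the two index sets to coincide once we restrict attention to $R$. First I would record, using \eqref{mj1} together with the hypothesis that $\Cay(\mathbb{Z}_{n},S_{1})$ and $\Cay(\mathbb{Z}_{n},S_{2})$ share the displayed spectrum, that $|\mathcal{L}_{S_{1}}(\nu_{j})|=m_{j}=|\mathcal{L}_{S_{2}}(\nu_{j})|$ for every $j\in[J]$.

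Next I would observe that for any $k\in R$ the equality $\lambda_{k}(S_{1})=\lambda_{k}(S_{2})$ implies that $\lambda_{k}(S_{1})=\nu_{j}$ holds if and only if $\lambda_{k}(S_{2})=\nu_{j}$. Hence, as \emph{sets}, $\mathcal{L}_{S_{1}}(\nu_{j})\cap R=\mathcal{L}_{S_{2}}(\nu_{j})\cap R$, and in particular $|\mathcal{L}_{S_{1}}(\nu_{j})\cap R|=|\mathcal{L}_{S_{2}}(\nu_{j})\cap R|$. It is worth emphasising that $R$ gives genuine set equality of the restrictions here, not merely equal cardinality; this is precisely what makes the subsequent subtraction valid.

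Finally I would split each index set along $R$. Since $\mathcal{L}_{S_{i}}(\nu_{j})$ is the disjoint union of $\mathcal{L}_{S_{i}}(\nu_{j})\setminus R$ and $\mathcal{L}_{S_{i}}(\nu_{j})\cap R$, we have $|\mathcal{L}_{S_{i}}(\nu_{j})\setminus R|=|\mathcal{L}_{S_{i}}(\nu_{j})|-|\mathcal{L}_{S_{i}}(\nu_{j})\cap R|$ for $i\in\{1,2\}$. Comparing the two values of $i$ and substituting the two equalities established above yields $|\mathcal{L}_{S_{1}}(\nu_{j})\setminus R|=|\mathcal{L}_{S_{2}}(\nu_{j})\setminus R|$, as required. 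There is no substantive obstacle in this argument: the statement is essentially a bookkeeping identity, and the only point that requires care is the set-level equality of the $R$-restrictions noted above, after which the result follows by a direct count.
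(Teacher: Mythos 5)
Your proposal is correct and follows essentially the same route as the paper's proof: establish the set equality $\mathcal{L}_{S_{1}}(\nu_{j})\cap R=\mathcal{L}_{S_{2}}(\nu_{j})\cap R$ from the hypothesis on $R$, use (\ref{mj1}) to get $|\mathcal{L}_{S_{1}}(\nu_{j})|=m_{j}=|\mathcal{L}_{S_{2}}(\nu_{j})|$, and subtract. The paper's argument is precisely this bookkeeping computation, so there is nothing to add.
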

\begin{proof}
$\forall j\in[J]$, $\forall k\in\mathcal{L}_{S_{1}}(\nu_{j})\cap R$, $\nu_{j}=\lambda_{k}(S_{1})=\lambda_{k}(S_{2})$ and so $k\in\mathcal{L}_{S_{2}}(\nu_{j})\cap R$. Thus, $\mathcal{L}_{S_{1}}(\nu_{j})\cap R\subseteq\mathcal{L}_{S_{2}}(\nu_{j})\cap R$. Similarly, $\mathcal{L}_{S_{1}}(\nu_{j})\cap R\supseteq\mathcal{L}_{S_{2}}(\nu_{j})\cap R$ and so
\begin{equation}\label{minusReq1}
\mathcal{L}_{S_{1}}(\nu_{j})\cap R=\mathcal{L}_{S_{2}}(\nu_{j})\cap R.
\end{equation}
Then
\begin{align*}
|\mathcal{L}_{S_{1}}(\nu_{j})\setminus R|&=|\mathcal{L}_{S_{1}}(\nu_{j})|-|\mathcal{L}_{S_{1}}(\nu_{j})\cap R|
\\&=m_{j}-|\mathcal{L}_{S_{1}}(\nu_{j})\cap R|\tag{by (\ref{mj1})}
\\&=|\mathcal{L}_{S_{2}}(\nu_{j})|-|\mathcal{L}_{S_{1}}(\nu_{j})\cap R|\tag{by (\ref{mj1})}
\\&=|\mathcal{L}_{S_{2}}(\nu_{j})|-|\mathcal{L}_{S_{2}}(\nu_{j})\cap R|\tag{by (\ref{minusReq1})}
\\&=|\mathcal{L}_{S_{2}}(\nu_{j})\setminus R|.
\end{align*}
This completes the proof.\qed
\end{proof}

\begin{itemize}
\item Let $x\geq1$ and $y\geq1$ be integers. The \emph{Ramanujan sum} $\mathcal{R}_{x}(y)$ is defined as
$$
\mathcal{R}_{x}(y)=\sum_{g\in G_{x}(1)}\omega_{x}^{yg}.
$$
Given an integral circulant graph $\ICG(n,\mathcal{D}_{S})$, it is known (See\cite[Theorem~5.1]{so0}) that $\forall k\in[n]$,
\begin{equation}\label{ramanujan}
\lambda_{k}(S)=\sum_{d\in\mathcal{D}_{S}}\mathcal{R}_{n/d}(k).
\end{equation}
The following formula is given by Ramanujan in \cite{ramanujan},
\begin{equation}\label{lambdaphimu}
\mathcal{R}_{x}(y)=\frac{\phi(x)}{\phi(\frac{x}{\gcd(y,x)})}\mu(\frac{x}{\gcd(y,x)}),
\end{equation}
where $\mu$ is the \emph{M\"{o}bius function} given by
\begin{align*}
\mu(k)=\left\{
\begin{array}{ll}
1, & \text{if $k$ is square-free and has an even number of prime factors},\\
-1, & \text{if $k$ is square-free and has an odd number of prime factors},\\ 
0, & \text{if $k$ has a squared prime factor},
\end{array}
\right.
\end{align*}
for any integer $k\geq1$.

\end{itemize}
\begin{lemma}\label{sameorbiteigen}
Let $\ICG(n,\mathcal{D}_{S})$ be an integral circulant graph. Set $k_{1},k_{2}\in[n]$. If $k_{1},k_{2}$ satisfy $\gcd(k_{1},n)=\gcd(k_{2},n)$, then $\lambda_{k_{1}}(S)=\lambda_{k_{2}}(S)$.
\end{lemma}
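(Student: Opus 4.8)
The plan is to reduce the asserted equality of eigenvalues to a statement purely about greatest common divisors, using the Ramanujan-sum expansion. By (\ref{ramanujan}), for $i\in\{1,2\}$ we have
$$
\lambda_{k_{i}}(S)=\sum_{d\in\mathcal{D}_{S}}\mathcal{R}_{n/d}(k_{i}).
$$
It therefore suffices to show that each summand agrees, i.e.\ $\mathcal{R}_{n/d}(k_{1})=\mathcal{R}_{n/d}(k_{2})$ for every $d\in\mathcal{D}_{S}$. By the closed form (\ref{lambdaphimu}), the Ramanujan sum $\mathcal{R}_{x}(y)$ depends on $y$ only through the quantity $x/\gcd(y,x)$; in particular $\mathcal{R}_{n/d}(k_{i})$ is a function of $\gcd(k_{i},n/d)$ alone. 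Hence the whole lemma follows once I establish the number-theoretic fact that $\gcd(k_{1},n)=\gcd(k_{2},n)$ forces $\gcd(k_{1},m)=\gcd(k_{2},m)$ for \emph{every} divisor $m$ of $n$ (to be applied with $m=n/d$).

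To prove that fact I would argue prime by prime via $p$-adic valuations. Write $v_{p}$ for the exponent of a prime $p$ in an integer; then $v_{p}(\gcd(a,b))=\min\{v_{p}(a),v_{p}(b)\}$, so the hypothesis $\gcd(k_{1},n)=\gcd(k_{2},n)$ is equivalent to $\min\{v_{p}(k_{1}),v_{p}(n)\}=\min\{v_{p}(k_{2}),v_{p}(n)\}$ for every prime $p\mid n$. Fix such a prime $p$ and a divisor $m$ of $n$, so that $v_{p}(m)\le v_{p}(n)$. If this common minimum equals $v_{p}(n)$, then $v_{p}(k_{1}),v_{p}(k_{2})\ge v_{p}(n)\ge v_{p}(m)$, whence $\min\{v_{p}(k_{1}),v_{p}(m)\}=v_{p}(m)=\min\{v_{p}(k_{2}),v_{p}(m)\}$. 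Otherwise the common minimum is strictly less than $v_{p}(n)$, which forces $v_{p}(k_{1})=v_{p}(k_{2})$ (each must equal that minimum), and then trivially $\min\{v_{p}(k_{1}),v_{p}(m)\}=\min\{v_{p}(k_{2}),v_{p}(m)\}$. In either case the $p$-adic valuations of $\gcd(k_{1},m)$ and $\gcd(k_{2},m)$ coincide; since every prime dividing either gcd divides $m\mid n$, ranging over all $p\mid n$ yields $\gcd(k_{1},m)=\gcd(k_{2},m)$.

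Combining the two parts: for each $d\in\mathcal{D}_{S}$ we obtain $\gcd(k_{1},n/d)=\gcd(k_{2},n/d)$, hence $(n/d)/\gcd(k_{1},n/d)=(n/d)/\gcd(k_{2},n/d)$, and so by (\ref{lambdaphimu}) the Ramanujan sums $\mathcal{R}_{n/d}(k_{1})$ and $\mathcal{R}_{n/d}(k_{2})$ are equal; summing over $d$ gives $\lambda_{k_{1}}(S)=\lambda_{k_{2}}(S)$. The only real content is the valuation argument of the middle paragraph, which is the main (and rather mild) obstacle; everything else is bookkeeping with the two displayed formulas. A slightly slicker route avoids explicit valuations: the hypothesis says $k_{1},k_{2}$ lie in the same set $G_{n}(g)$ with $g=\gcd(k_{1},n)$, the units of $\mathbb{Z}_{n}$ act transitively on $G_{n}(g)=g\cdot G_{n/g}(1)$ because the reduction map $(\mathbb{Z}_{n})^{\times}\to(\mathbb{Z}_{n/g})^{\times}$ is surjective, and since $S$ is a union of such sets it is invariant under this unit action; writing $k_{2}\equiv uk_{1}\pmod{n}$ for a unit $u$ and reindexing $g\mapsto ug$ in $\lambda_{k_{2}}(S)=\sum_{g\in S}\omega_{n}^{k_{2}g}$ gives $\lambda_{k_{2}}(S)=\lambda_{k_{1}}(S)$ directly. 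I expect the paper to take the Ramanujan-sum route, since those formulas have just been set up.
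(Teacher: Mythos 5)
Your proposal is correct and follows essentially the same route as the paper: both expand $\lambda_{k_{i}}(S)$ via (\ref{ramanujan}) and (\ref{lambdaphimu}) and reduce everything to the fact that $\gcd(k_{1},n)=\gcd(k_{2},n)$ implies $\gcd(k_{1},n/d)=\gcd(k_{2},n/d)$ for every divisor $d$ of $n$ --- a step the paper merely asserts and you justify with a correct $p$-adic valuation argument. The unit-action alternative you sketch at the end is also valid, but it is an aside; your main argument coincides with the paper's proof.
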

\begin{proof}
For every divisor $d$ of $n$,
we have
\begin{equation}\label{sameorbiteigeneq1}
\gcd(k_{1},n/d)=\gcd(k_{2},n/d).
\end{equation}
Then
\begin{align*}
\lambda_{k_{1}}(S)&=\sum_{d\in\mathcal{D}_{S}}\frac{\phi(n/d)}{\phi(\frac{n/d}{\gcd(k_{1},n/d)})}\mu(\frac{n/d}{\gcd(k_{1},n/d)})\tag{by (\ref{ramanujan}) and (\ref{lambdaphimu})}
\\&=\sum_{d\in\mathcal{D}_{S}}\frac{\phi(n/d)}{\phi(\frac{n/d}{\gcd(k_{2},n/d)})}\mu(\frac{n/d}{\gcd(k_{2},n/d)})\tag{by (\ref{sameorbiteigeneq1})}
\\&=\lambda_{k_{2}}(S).\tag{by (\ref{ramanujan}) and (\ref{lambdaphimu})}
\end{align*}
This completes the proof.\qed
\end{proof}

\begin{cor}\label{orbiteigen}
Let $\ICG(n,\mathcal{D}_{S})$ be an integral circulant graph. Let $\alpha$ be an eigenvalue of $\ICG(n,\mathcal{D}_{S})$. Then $\mathcal{L}_{S}(\alpha)$ is a union of $G_{n}(d)$'s for some divisors $d$ of $n$.

\end{cor}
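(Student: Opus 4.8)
The plan is to deduce this directly from Lemma \ref{sameorbiteigen} by observing that $\mathcal{L}_{S}(\alpha)$ must respect the partition of $[n]$ into the classes $G_{n}(d)$, which is precisely the claimed conclusion. First I would recall that $\{G_{n}(d):d\mid n\}$ is a partition of $[n]$: every $k\in[n]$ has a well-defined value $\gcd(k,n)$, this value is a divisor of $n$, and hence $k$ lies in exactly one block, namely $G_{n}(\gcd(k,n))$.

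The key point is that Lemma \ref{sameorbiteigen} says $\lambda_{k}(S)$ depends only on $\gcd(k,n)$, so the map $k\mapsto\lambda_{k}(S)$ is constant on each block $G_{n}(d)$. Consequently, if a single element of some block satisfies $\lambda_{k}(S)=\alpha$, then every element of that block does. Concretely, I would take any $k\in\mathcal{L}_{S}(\alpha)$ and set $d=\gcd(k,n)$; for an arbitrary $k'\in G_{n}(d)$ we have $\gcd(k',n)=d=\gcd(k,n)$, so Lemma \ref{sameorbiteigen} yields $\lambda_{k'}(S)=\lambda_{k}(S)=\alpha$, whence $k'\in\mathcal{L}_{S}(\alpha)$. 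This shows that $\mathcal{L}_{S}(\alpha)$ meeting $G_{n}(d)$ forces $G_{n}(d)\subseteq\mathcal{L}_{S}(\alpha)$.

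To finish, I would set $\mathcal{D}=\{d\mid n:G_{n}(d)\subseteq\mathcal{L}_{S}(\alpha)\}$ and conclude $\mathcal{L}_{S}(\alpha)=\bigcup_{d\in\mathcal{D}}G_{n}(d)$, exhibiting $\mathcal{L}_{S}(\alpha)$ as a union of $G_{n}(d)$'s for divisors $d$ of $n$. There is essentially no obstacle here, as the statement is an immediate corollary of Lemma \ref{sameorbiteigen} combined with the fact that the $G_{n}(d)$ partition $[n]$; the only point requiring any care is confirming that the partition blocks are indexed exactly by the divisors of $n$ and that membership in $\mathcal{L}_{S}(\alpha)$ is a block-wise property.
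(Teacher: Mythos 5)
Your proposal is correct and takes essentially the same approach as the paper: both arguments rest entirely on Lemma \ref{sameorbiteigen} together with the fact that the sets $G_{n}(d)$ partition $[n]$. The only difference is presentational --- the paper argues by contradiction, choosing $k_{1}\in\mathcal{L}_{S}(\alpha)\cap G_{n}(d_{0})$ and $k_{2}\in G_{n}(d_{0})\setminus\mathcal{L}_{S}(\alpha)$, while you prove the contrapositive directly by showing each block meeting $\mathcal{L}_{S}(\alpha)$ lies inside it.
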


\begin{proof}
We give our proof by contradiction. Assume that $\mathcal{L}_{S}(\alpha)$ is not a union of $G_{n}(d)$'s for some divisors $d$ of $n$. Then there exists $d_{0}\in\mathcal{D}_{[n]}$ such that $\mathcal{L}_{S}(\alpha)\cap G_{n}(d_{0})\neq\emptyset$ and $G_{n}(d_{0})\setminus\mathcal{L}_{S}(\alpha)\neq\emptyset$. Set $k_{1}\in\mathcal{L}_{S}(\alpha)\cap G_{n}(d_{0})$ and $k_{2}\in G_{n}(d_{0})\setminus\mathcal{L}_{S}(\alpha)$. Since $k_{1},k_{2}\in G_{n}(d_{0})$, we have $\gcd(k_{1},n)=d_{0}=\gcd(k_{2},n)$. Hence by Lemma \ref{sameorbiteigen},
\begin{equation}\label{orbiteigeneq1}
\lambda_{k_{1}}(S)=\lambda_{k_{2}}(S).
\end{equation}
Since $k_{1}\in\mathcal{L}_{S}(\alpha)$ and $k_{2}\notin\mathcal{L}_{S}(\alpha)$, we have
\begin{align*}
\lambda_{k_{1}}(S)=\alpha\neq\lambda_{k_{2}}(S),
\end{align*}
which contradicts (\ref{orbiteigeneq1}).\qed
\end{proof}

\begin{lemma}
\label{lambdan/2}
Let $n\geq1$ be an integer such that $2|n$. Let $\ICG(n,\mathcal{D}_{S_{1}})$ and $\ICG(n,\mathcal{D}_{S_{2}})$ be isospectral integral circulant graphs. Then
$$\lambda_{\frac{n}{2}}(S_{1})=\lambda_{\frac{n}{2}}(S_{2}).$$
\end{lemma}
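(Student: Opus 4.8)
The plan is to exploit the \emph{parity of eigenvalue multiplicities}, which for an integral circulant graph is controlled by the sizes of the orbits $G_n(d)$. Recall first that $\lambda_n(S)=\sum_{g\in S}\omega_n^{ng}=|S|$, and that by Lemma~\ref{sameorbiteigen} the value $\lambda_k(S)$ depends only on $\gcd(k,n)$; write $\Lambda_S(d)$ for the common value of $\lambda_k(S)$ as $k$ ranges over the orbit $G_n(d)$. The two positions we care about lie in singleton orbits: since $2\mid n$, one has $G_n(n)=\{n\}$ and $G_n(n/2)=\{n/2\}$, with $\Lambda_S(n)=\lambda_n(S)=|S|$ and $\Lambda_S(n/2)=\lambda_{n/2}(S)$.

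The key step is a parity count of multiplicities. By Corollary~\ref{orbiteigen}, for each eigenvalue $\nu$ the set $\mathcal{L}_S(\nu)$ is a union of orbits $G_n(d)$, so by (\ref{mj1}) its multiplicity is $m_S(\nu)=\sum_{d:\,\Lambda_S(d)=\nu}\phi(n/d)$. Now $\phi(n/d)$ is odd exactly when $n/d\in\{1,2\}$, i.e.\ when $d\in\{n,n/2\}$, because $\phi(1)=\phi(2)=1$ while $\phi(k)$ is even for every $k\geq 3$. Hence, modulo $2$, $m_S(\nu)$ is odd precisely when $\nu$ equals exactly one of the two values $\lambda_n(S)=|S|$ and $\lambda_{n/2}(S)$. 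In other words, the set of eigenvalues occurring with odd multiplicity is completely governed by these two numbers.

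I would then read $\lambda_{n/2}$ off this odd-multiplicity data. If $\lambda_{n/2}(S)\neq |S|$, the eigenvalues of odd multiplicity are exactly the two values $|S|$ and $\lambda_{n/2}(S)$; if $\lambda_{n/2}(S)=|S|$, then every eigenvalue has even multiplicity. Since $\ICG(n,\mathcal{D}_{S_1})$ and $\ICG(n,\mathcal{D}_{S_2})$ are isospectral, the multiplicity functions $m_{S_1}(\cdot)$ and $m_{S_2}(\cdot)$ coincide, and in particular so do their sets of odd-multiplicity eigenvalues; thus the two graphs fall into the same one of the two cases above. By Lemma~\ref{lambdan} we also have $|S_1|=\lambda_n(S_1)=\lambda_n(S_2)=|S_2|=:N$. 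In the first case the common odd-multiplicity set is $\{N,\lambda_{n/2}(S_1)\}=\{N,\lambda_{n/2}(S_2)\}$ with both second entries different from $N$, forcing $\lambda_{n/2}(S_1)=\lambda_{n/2}(S_2)$; in the second case $\lambda_{n/2}(S_1)=N=\lambda_{n/2}(S_2)$ directly.

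The only genuinely delicate ingredient is the parity lemma ``$\phi(n/d)$ is odd $\iff d\in\{n,n/2\}$'', which singles out $\{n\}$ and $\{n/2\}$ as the unique singleton orbits and makes the entire multiplicity spectrum, reduced modulo $2$, depend on the two eigenvalues $\lambda_n(S)$ and $\lambda_{n/2}(S)$ alone. Once this observation is in place the remaining case analysis is routine and uses nothing beyond isospectrality and Lemma~\ref{lambdan}.
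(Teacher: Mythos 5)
Your proof is correct and takes essentially the same route as the paper: both arguments rest on the observation that $\phi(n/d)$ is odd only for $d\in\{n,n/2\}$, so that parity of eigenvalue multiplicities (computed orbitwise via Corollary~\ref{orbiteigen}) pins down $\lambda_{\frac{n}{2}}$, combined with Lemma~\ref{lambdan} to match the $\lambda_{n}$ values. The only difference is organizational: the paper first removes the contribution of $d=n$ using Lemma~\ref{minusR}, so that odd parity detects $\lambda_{\frac{n}{2}}$ alone, whereas you retain both odd contributions and dispose of the degenerate case $\lambda_{\frac{n}{2}}(S)=\lambda_{n}(S)$ by a short case split --- which you handle correctly.
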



\begin{proof}
Set
\begin{align*}
\spec(\ICG(n,\mathcal{D}_{S_{1}}))=\spec(\ICG(n,\mathcal{D}_{S_{2}}))=\left(
\begin{array}{cccc}
\nu_{1} & \nu_{2} & \ldots & \nu_{J} \\
m_{1} & m_{2} & \ldots & m_{J}
\end{array}
\right).
\end{align*}
Set $\nu_{j_{0}}=\lambda_{\frac{n}{2}}(S_{1})$. Then
\begin{align*}
|\mathcal{L}_{S_{1}}(\nu_{j_{0}})\setminus\{n\}|
&=\sum_{d\in\mathcal{D}_{\mathcal{L}_{S_{1}}(\nu_{j_{0}})}\setminus\{n\}}\phi(n/d)\tag{by (\ref{Scard})}
\\&=\phi(2)+\sum_{d\in\mathcal{D}_{\mathcal{L}_{S_{1}}(\nu_{j_{0}})}\setminus\{\frac{n}{2},n\}}\phi(n/d)
\\&\equiv1~(\mathrm{mod}\; 2)\tag{because $\forall k\geq3$, $2|\phi(k)$}
\end{align*}
and $\forall j\in[J]\setminus\{j_{0}\}$,
\begin{align*}
|\mathcal{L}_{S_{1}}(\nu_{j})\setminus\{n\}|
&=\sum_{d\in\mathcal{D}_{\mathcal{L}_{S_{1}}(\nu_{j})}\setminus\{n\}}\phi(n/d)\tag{by (\ref{Scard})}
\\&=\sum_{d\in\mathcal{D}_{\mathcal{L}_{S_{1}}(\nu_{j})}\setminus\{\frac{n}{2},n\}}\phi(n/d)\tag{because $\lambda_{\frac{n}{2}}(S_{1})=\nu_{j_{0}}\neq\nu_{j}$}
\\&\equiv0~(\mathrm{mod}\; 2).\tag{because $\forall k\geq3$, $2|\phi(k)$}
\end{align*}
Set $\nu_{j_{0}'}=\lambda_{\frac{n}{2}}(S_{2})$. Similarly,
$$|\mathcal{L}_{S_{2}}(\nu_{j_{0}'})\setminus\{n\}|\equiv1~(\mathrm{mod}\; 2).$$
By Lemma \ref{lambdan}, $\lambda_{n}(S_{1})=\lambda_{n}(S_{2})$. By Lemma \ref{minusR}, $\forall j\in[J]$,
$$|\mathcal{L}_{S_{1}}(\nu_{j})\setminus\{n\}|=|\mathcal{L}_{S_{2}}(\nu_{j})\setminus\{n\}|.$$
In particular,
$$
|\mathcal{L}_{S_{1}}(\nu_{j_{0}'})\setminus\{n\}|=|\mathcal{L}_{S_{2}}(\nu_{j_{0}'})\setminus\{n\}|\equiv1~(\mathrm{mod}\;2).
$$
Therefore, $j_{0}=j_{0}'$ and so $\lambda_{\frac{n}{2}}(S_{1})=\nu_{j_{0}}=\nu_{j_{0}'}=\lambda_{\frac{n}{2}}(S_{2})$.\qed

\end{proof}


For convenience, we denote the set of odd integers by $\mathcal{O}$ and the set of even integers by $\mathcal{E}$.

\begin{cor}\label{n/2cor}
Let $n\geq1$ be an integer such that $2|n$. Let $\ICG(n,\mathcal{D}_{S_{1}})$ and $\ICG(n,\mathcal{D}_{S_{2}})$ be isospectral integral circulant graphs. Then we have
\begin{itemize}
\item[\rm{(a)}]$\sum_{d\in\mathcal{D}_{[n]}\cap\mathcal{O}}\chi_{\mathcal{D}_{S_{1}}}(d)\phi(n/d)
=\sum_{d\in\mathcal{D}_{[n]}\cap\mathcal{O}}\chi_{\mathcal{D}_{S_{2}}}(d)\phi(n/d)$; and
\item[\rm{(b)}]$\sum_{d\in\mathcal{D}_{[n]}\cap\mathcal{E}}\chi_{\mathcal{D}_{S_{1}}}(d)\phi(n/d)
=\sum_{d\in\mathcal{D}_{[n]}\cap\mathcal{E}}\chi_{\mathcal{D}_{S_{2}}}(d)\phi(n/d).$
\end{itemize}
\end{cor}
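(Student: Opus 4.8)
The plan is to package the two quantities appearing in (a) and (b) as the two unknowns of a linear system, whose two defining equations are supplied by Lemmas \ref{lambdan} and \ref{lambdan/2}. For $i\in\{1,2\}$ write
$$
A_i=\sum_{d\in\mathcal{D}_{[n]}\cap\mathcal{O}}\chi_{\mathcal{D}_{S_{i}}}(d)\phi(n/d),\qquad B_i=\sum_{d\in\mathcal{D}_{[n]}\cap\mathcal{E}}\chi_{\mathcal{D}_{S_{i}}}(d)\phi(n/d),
$$
so that the goal is precisely $A_1=A_2$ (statement (a)) and $B_1=B_2$ (statement (b)). The strategy is to evaluate $\lambda_n(S_i)$ and $\lambda_{\frac{n}{2}}(S_i)$ in terms of $A_i$ and $B_i$, and then solve.

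First I would compute $\lambda_n(S_i)$. By Lemma \ref{eigenvaluelambda}, $\lambda_n(S_i)=\sum_{g\in S_i}\omega_n^{ng}=|S_i|$, since $\omega_n^{ng}=1$ for every $g$. Splitting $S_i=\bigcup_{d\in\mathcal{D}_{S_i}}G_n(d)$ according to the parity of the divisor $d$ and using $|G_n(d)|=\phi(n/d)$ from (\ref{Scard}) gives $\lambda_n(S_i)=A_i+B_i$.

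Next I would compute $\lambda_{\frac{n}{2}}(S_i)$, using $\omega_n^{(n/2)g}=(-1)^g$ so that $\lambda_{\frac{n}{2}}(S_i)=\sum_{g\in S_i}(-1)^g$. The key observation is that, because $2\mid n$, every element of $G_n(d)$ has the same parity as $d$: indeed $g\in G_n(d)$ means $g=dg'$ with $\gcd(g',n/d)=1$, and when $d$ is odd the hypothesis $2\mid n$ forces $n/d$ to be even, hence $g'$ and therefore $g$ odd, while when $d$ is even $g=dg'$ is automatically even. Consequently $\sum_{g\in G_n(d)}(-1)^g$ equals $\phi(n/d)$ for even $d$ and $-\phi(n/d)$ for odd $d$, and summing over $d\in\mathcal{D}_{S_i}$ yields $\lambda_{\frac{n}{2}}(S_i)=B_i-A_i$.

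Finally, Lemma \ref{lambdan} gives $\lambda_n(S_1)=\lambda_n(S_2)$, i.e.\ $A_1+B_1=A_2+B_2$, and Lemma \ref{lambdan/2} gives $\lambda_{\frac{n}{2}}(S_1)=\lambda_{\frac{n}{2}}(S_2)$, i.e.\ $B_1-A_1=B_2-A_2$. Adding and subtracting these two identities yields $B_1=B_2$ and $A_1=A_2$, which are exactly (b) and (a). I expect no serious obstacle: the only genuine content is the parity observation for $G_n(d)$, and that is precisely the step where the hypothesis $2\mid n$ is essential (it is what makes $n/d$ even for odd $d$); everything else is a two-by-two linear solve.
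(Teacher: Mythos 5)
Your proposal is correct and follows essentially the same route as the paper: both proofs extract the two linear identities $A_1+B_1=A_2+B_2$ and $-A_1+B_1=-A_2+B_2$ from Lemmas \ref{lambdan} and \ref{lambdan/2} and then solve the resulting $2\times2$ system by adding and subtracting. The only difference is in a sub-step: where the paper evaluates $\lambda_{n}(S_i)$ and $\lambda_{\frac{n}{2}}(S_i)$ via the Ramanujan-sum formulas (\ref{ramanujan}) and (\ref{lambdaphimu}), you derive the same values directly from Lemma \ref{eigenvaluelambda} using $\omega_n^{(n/2)g}=(-1)^g$ together with the correct observation that, since $2\mid n$, every element of $G_n(d)$ has the same parity as $d$ --- an equally valid and slightly more elementary computation.
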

\begin{proof}
By (\ref{ramanujan}) and (\ref{lambdaphimu}),
$$
\lambda_{n}(S_{1})=\sum_{d\in\mathcal{D}_{S_{1}}}\phi(n/d)=\sum_{d\in\mathcal{D}_{[n]}\cap\mathcal{O}}\chi_{\mathcal{D}_{S_{1}}}(d)\phi(n/d)
+\sum_{d\in\mathcal{D}_{[n]}\cap\mathcal{E}}\chi_{\mathcal{D}_{S_{1}}}(d)\phi(n/d)
$$
and
$$
\lambda_{n}(S_{2})=
\sum_{d\in\mathcal{D}_{S_{2}}}\phi(n/d)
=\sum_{d\in\mathcal{D}_{[n]}\cap\mathcal{O}}\chi_{\mathcal{D}_{S_{2}}}(d)\phi(n/d)
+\sum_{d\in\mathcal{D}_{[n]}\cap\mathcal{E}}\chi_{\mathcal{D}_{S_{2}}}(d)\phi(n/d).
$$
By Lemma \ref{lambdan}, $\lambda_{n}(S_{1})=\lambda_{n}(S_{2})$ and so
\begin{equation}\label{n/2coreq1}\begin{split}
&\sum_{d\in\mathcal{D}_{[n]}\cap\mathcal{O}}\chi_{\mathcal{D}_{S_{1}}}(d)\phi(n/d)
+\sum_{d\in\mathcal{D}_{[n]}\cap\mathcal{E}}\chi_{\mathcal{D}_{S_{1}}}(d)\phi(n/d)
\\=&\sum_{d\in\mathcal{D}_{[n]}\cap\mathcal{O}}\chi_{\mathcal{D}_{S_{2}}}(d)\phi(n/d)
+\sum_{d\in\mathcal{D}_{[n]}\cap\mathcal{E}}\chi_{\mathcal{D}_{S_{2}}}(d)\phi(n/d).
\end{split}\end{equation}
By (\ref{ramanujan}) and (\ref{lambdaphimu}),
$$
\lambda_{\frac{n}{2}}(S_{1})=-\sum_{d\in\mathcal{D}_{[n]}\cap\mathcal{O}}\chi_{\mathcal{D}_{S_{1}}}(d)\phi(n/d)
+\sum_{d\in\mathcal{D}_{[n]}\cap\mathcal{E}}\chi_{\mathcal{D}_{S_{1}}}(d)\phi(n/d)
$$
and
$$
\lambda_{\frac{n}{2}}(S_{2})
=-\sum_{d\in\mathcal{D}_{[n]}\cap\mathcal{O}}\chi_{\mathcal{D}_{S_{2}}}(d)\phi(n/d)
+\sum_{d\in\mathcal{D}_{[n]}\cap\mathcal{E}}\chi_{\mathcal{D}_{S_{2}}}(d)\phi(n/d).
$$
By Lemma \ref{lambdan/2}, $\lambda_{\frac{n}{2}}(S_{1})=\lambda_{\frac{n}{2}}(S_{2})$ and so
\begin{equation}\label{n/2coreq2}\begin{split}
&-\sum_{d\in\mathcal{D}_{[n]}\cap\mathcal{O}}\chi_{\mathcal{D}_{S_{1}}}(d)\phi(n/d)
+\sum_{d\in\mathcal{D}_{[n]}\cap\mathcal{E}}\chi_{\mathcal{D}_{S_{1}}}(d)\phi(n/d)
\\=&-\sum_{d\in\mathcal{D}_{[n]}\cap\mathcal{O}}\chi_{\mathcal{D}_{S_{2}}}(d)\phi(n/d)
+\sum_{d\in\mathcal{D}_{[n]}\cap\mathcal{E}}\chi_{\mathcal{D}_{S_{2}}}(d)\phi(n/d).
\end{split}\end{equation}
By (\ref{n/2coreq1}) and (\ref{n/2coreq2}), we have (a) and (b).
This completes the proof.\qed

\end{proof}
Here we borrow a concept called ``super sequence'' from \cite{howmany}. Let $c>0$ be a real number. A sequence $\{x_{j}\}_{j=0}^{J}$ of positive real numbers is called a \emph{$c$-super sequence} if $\forall t\in[J]$, $x_{t}>c\sum_{j=0}^{t-1}x_{j}$. In particular, a super sequence in \cite{howmany} is a $1$-super sequence.

\begin{lemma}
\label{supersequence1}
Let $\{x_{j}\}_{j=0}^{J}$ be a $c$-super sequence. Let $\{a_{j}\}_{j=0}^{J}$ and $\{b_{j}\}_{j=0}^{J}$ be two finite sequences of nonnegative integers such that
\begin{itemize}
\item[\rm{(1)}] $\forall j\in\{0,[J]\}$, $0\leq a_{j},b_{j}\leq c$; and
\item[\rm{(2)}] $\sum_{j=0}^{J}a_{j}x_{j}=\sum_{j=0}^{J}b_{j}x_{j}$,
\end{itemize}
then $\forall j\in\{0,[J]\}$, $a_{j}=b_{j}$.
\end{lemma}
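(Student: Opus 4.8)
The plan is to run a standard ``leading-coefficient'' uniqueness argument, exactly as one proves that the digits in a radix expansion are uniquely determined. The super-sequence hypothesis $x_{t}>c\sum_{j=0}^{t-1}x_{j}$ says precisely that, once the coefficients are capped by $c$, the single term $x_{t}$ outweighs every admissible combination of the lower-indexed terms; this domination is the feature that will force the two sequences to agree from the top index downward.

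First I would argue by contradiction: suppose $\{a_{j}\}$ and $\{b_{j}\}$ differ, and let $t$ be the \emph{largest} index with $a_{t}\neq b_{t}$. Without loss of generality $a_{t}>b_{t}$, so $a_{t}-b_{t}\geq1$ because both are integers. Rewriting condition (2) as $\sum_{j=0}^{J}(a_{j}-b_{j})x_{j}=0$ and using $a_{j}=b_{j}$ for all $j>t$, the sum collapses to $\sum_{j=0}^{t}(a_{j}-b_{j})x_{j}=0$, that is,
\[
(a_{t}-b_{t})\,x_{t}=\sum_{j=0}^{t-1}(b_{j}-a_{j})\,x_{j}.
\]

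Next I would bound the two sides against each other. The left-hand side is at least $x_{t}$, since $a_{t}-b_{t}\geq1$ and $x_{t}>0$. For the right-hand side, condition (1) gives $0\leq a_{j},b_{j}\leq c$, hence $|b_{j}-a_{j}|\leq c$ for every $j$, so
\[
\left|\sum_{j=0}^{t-1}(b_{j}-a_{j})x_{j}\right|\leq\sum_{j=0}^{t-1}|b_{j}-a_{j}|\,x_{j}\leq c\sum_{j=0}^{t-1}x_{j}.
\]
If $t\geq1$, the super-sequence property yields $c\sum_{j=0}^{t-1}x_{j}<x_{t}$, and chaining the estimates gives $x_{t}\leq(a_{t}-b_{t})x_{t}\leq c\sum_{j=0}^{t-1}x_{j}<x_{t}$, a contradiction. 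The boundary case $t=0$ is even simpler: the collapsed equation reads $(a_{0}-b_{0})x_{0}=0$ with $x_{0}>0$, forcing $a_{0}=b_{0}$, again a contradiction. Hence no such $t$ exists and $a_{j}=b_{j}$ for all $j\in\{0,[J]\}$.

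There is no serious obstacle here; the only points requiring care are the bookkeeping at the extremal index and the uniform bound on differences. One must take the \emph{largest} disagreement index (not the smallest), so that all higher terms cancel and the dominant term $x_{t}$ is isolated on one side, and one must invoke the clean inequality $|a_{j}-b_{j}|\leq c$ supplied by condition (1). Everything else is a direct application of the super-sequence inequality, and this argument is exactly what later proofs will use to pin down $\mathcal{D}_{S_{1}}=\mathcal{D}_{S_{2}}$ from matching spectral data.
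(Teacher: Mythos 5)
Your proof is correct and follows essentially the same route as the paper's: both argue by contradiction from the largest index $M$ where the sequences disagree, use integrality to get $a_{M}-b_{M}\geq 1$, and invoke the $c$-super-sequence bound $x_{M}>c\sum_{j=0}^{M-1}x_{j}$ together with $0\leq a_{j},b_{j}\leq c$ to force a strict inequality between the two sums. The only differences are presentational — you subtract the sums and isolate the top term, and you treat $t=0$ explicitly, whereas the paper chains the inequalities on the full sums and absorbs the $M=0$ case via the empty sum — so nothing substantive separates the two arguments.
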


\begin{proof}
We give our proof by contradiction. Assume that $\exists j_{0}\in\{0,[J]\}$, s.t. $a_{j_{0}}\neq b_{j_{0}}$. Then $R=\{j\in\{0,[J]\}:a_{j}\neq b_{j}\}\neq\emptyset$. Let $M$ be the largest in $R$. Without loss of generality, set
$a_{M}>b_{M}$. Since both $a_{M}$ and $b_{M}$ are integers,
we have
\begin{equation}\label{supersequence1eq1}
a_{M}\geq1+b_{M}.
\end{equation}
Then
\begin{align*}
\sum_{j=0}^{J}a_{j}x_{j}&=\sum_{j=0}^{M}a_{j}x_{j}+\sum_{j=M+1}^{J}a_{j}x_{j}
\\&=\sum_{j=0}^{M}a_{j}x_{j}+\sum_{j=M+1}^{J}b_{j}x_{j}\tag{$M$ being the largest in $R$}
\\&\geq a_{M}x_{M}+\sum_{j=M+1}^{J}b_{j}x_{j}
\\&\geq(1+b_{M})x_{M}+\sum_{j=M+1}^{J}b_{j}x_{j}\tag{by (\ref{supersequence1eq1})}
\\&=x_{M}+b_{M}x_{M}+\sum_{j=M+1}^{J}b_{j}x_{j}
\\&>c\sum_{j=0}^{M-1}x_{j}+b_{M}x_{M}+\sum_{j=M+1}^{J}b_{j}x_{j}\tag{$\{x_{j}\}_{j=0}^{J}$ being a $c$-super sequence}
\\&=\sum_{j=0}^{M-1}cx_{j}+b_{M}x_{M}+\sum_{j=M+1}^{J}b_{j}x_{j}
\\&\geq\sum_{j=0}^{M-1}b_{j}x_{j}+b_{M}x_{M}+\sum_{j=M+1}^{J}b_{j}x_{j}\tag{by condition (1)}
\\&=\sum_{j=0}^{J}b_{j}x_{j},
\end{align*}
which contradicts condition (2).\qed
\end{proof}

\section{Proof of Theorem \ref{main}}\label{proofofmaintheorem}
In this section, we give a proof of Theorem \ref{main}.


\subsection{(a) and (b) of Theorem \ref{main}}\label{sub1}
In this subsection, we give proofs of (a) and (b) of Theorem \ref{main}.

\subsubsection{Useful notations and lemmas}
The notations and lemmas introduced here are important in the proofs of (a) and (b) of Theorem \ref{main}.
\begin{itemize}
\item Denote the set of positive real numbers by $\mathbb{R}_{+}$. Let $s\geq2$ be an integer. For every $i\in[s]$, let $J_{i}\geq1$ be an integer. Through the work, when a mapping
    $$
    f:\{(i,j):i\in[s],j\in\{0,[J_{i}]\}\}\rightarrow\mathbb{R}_{+}
    $$
    is given, we tacitly set for every $r\in[s]$,
    $$
    \mathcal{J}^{(r)}=\{0,[J_{1}]\}\times\{0,[J_{2}]\}\times\cdots\times\{0,[J_{r}]\},
    $$
    where $\times$ denotes the \emph{Cartesian product}. For convenience, we denote the image of $(i,j)$ under $f$ by $f_{i,j}$. Moreover, we tacitly set for every $r\in[s]$,
    \begin{equation}
    \label{defper}
    \Per^{(r)}=\sum_{\tau\in\CP^{(r)}}\prod_{i=1}^{r}f_{i,\tau_{i}},
    \end{equation}
    where $\tau_{i}$ denotes the $i$-th entry of $\tau$. In addition, we define $\Per^{(0)}=1$.


\end{itemize}

\begin{lemma}
\label{supersequence2}
Let $s\geq2$ be an integer. For every $i\in[s]$, let $J_{i}\geq1$ be an integer. Let
$$f:\{(i,j):i\in[s],j\in\{0,[J_{i}]\}\}\rightarrow\mathbb{R}_{+}$$
be a mapping such that $\forall r\in[s]$, $\{f_{r,j}\}_{j=0}^{J_{r}}$ is a $\Per^{(r-1)}$-super sequence. Let $A_{1}$ and $A_{2}$ be two subsets of $\CP^{(s)}$ such that
\begin{equation}\label{supersequence2eq1}
\sum_{\tau\in\CP^{(s)}}\chi_{A_{1}}(\tau)\prod_{i=1}^{s}f_{i,\tau_{i}}
=\sum_{\tau\in\CP^{(s)}}\chi_{A_{2}}(\tau)\prod_{i=1}^{s}f_{i,\tau_{i}}.
\end{equation}
Then $A_{1}=A_{2}$.
\end{lemma}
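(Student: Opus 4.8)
The plan is to induct on the number of coordinates, proving the following mildly more general claim: for every $r\in[s]$ and all $A,B\subseteq\CP^{(r)}$, the equality $\sum_{\tau\in\CP^{(r)}}\chi_{A}(\tau)\prod_{i=1}^{r}f_{i,\tau_i}=\sum_{\tau\in\CP^{(r)}}\chi_{B}(\tau)\prod_{i=1}^{r}f_{i,\tau_i}$ forces $A=B$; the lemma is then the case $r=s$. These hypotheses are inherited by every $r\le s$, since $\{f_{r',j}\}_{j=0}^{J_{r'}}$ is assumed to be a $\Per^{(r'-1)}$-super sequence for all $r'\in[s]$. For the base case $r=1$ I would quote Lemma \ref{supersequence1} directly: here $\CP^{(1)}=\{0,[J_1]\}$, the weights $\{f_{1,j}\}_{j=0}^{J_1}$ form a $\Per^{(0)}=1$-super sequence, and the coefficients $\chi_{A},\chi_{B}$ take values in $\{0,1\}\subseteq[0,1]$, so Lemma \ref{supersequence1} gives $\chi_{A}(j)=\chi_{B}(j)$ for every $j$, i.e.\ $A=B$.

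For the inductive step (from $r-1$ to $r$) I would peel off the last coordinate. Using the factorisations $\CP^{(r)}=\CP^{(r-1)}\times\{0,[J_r]\}$ and $\prod_{i=1}^{r}f_{i,\tau_i}=f_{r,\tau_r}\prod_{i=1}^{r-1}f_{i,\tau_i}$, set $A^{(j)}=\{\sigma\in\CP^{(r-1)}:(\sigma,j)\in A\}$ and $a_j=\sum_{\sigma\in\CP^{(r-1)}}\chi_{A^{(j)}}(\sigma)\prod_{i=1}^{r-1}f_{i,\sigma_i}$, and analogously $B^{(j)},b_j$. The given equality then collapses to the one-dimensional relation $\sum_{j=0}^{J_r}a_j f_{r,j}=\sum_{j=0}^{J_r}b_j f_{r,j}$. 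Each $a_j$ is a sum of some of the products constituting $\Per^{(r-1)}$, so $0\le a_j\le\Per^{(r-1)}$ and likewise for $b_j$. Since $\{f_{r,j}\}_{j=0}^{J_r}$ is a $\Per^{(r-1)}$-super sequence, Lemma \ref{supersequence1} with $c=\Per^{(r-1)}$ would yield $a_j=b_j$ for every $j$. But $a_j=b_j$ is precisely the level-$(r-1)$ equality for the pair $A^{(j)},B^{(j)}\subseteq\CP^{(r-1)}$, so the induction hypothesis gives $A^{(j)}=B^{(j)}$ for each $j$, whence $A=B$.

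The step I expect to be the main obstacle is checking that the coefficients $a_j,b_j$ actually satisfy the hypotheses of Lemma \ref{supersequence1}. The bound $a_j,b_j\le\Per^{(r-1)}=c$ is immediate from the permanent structure, but Lemma \ref{supersequence1} also demands that $a_j,b_j$ be nonnegative \emph{integers}, and this is where I would be most careful: the partial permanent $a_j=\sum_{\sigma}\chi_{A^{(j)}}(\sigma)\prod_{i}f_{i,\sigma_i}$ is an integer exactly when the weights $f_{i,j}$ are integers, as they are in the intended application (being values of $\phi$). I would therefore record explicitly where integrality of the $f_{i,j}$ is used. Concretely, the integrality is what powers the super-sequence pinch: at the largest index $M$ with $a_M\ne b_M$ one gets $f_{r,M}\,|a_M-b_M|=\bigl|\sum_{j<M}(a_j-b_j)f_{r,j}\bigr|\le\Per^{(r-1)}\sum_{j<M}f_{r,j}<f_{r,M}$, forcing $|a_M-b_M|<1$; this is a contradiction only once $a_M-b_M$ is known to be a nonzero integer. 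Absent integer weights the final inequality leaves room for a nonzero value in $(0,1)$, so I would double-check whether the real-valued hypothesis as stated is what is really needed, or whether integrality of the $f_{i,j}$ should be invoked at this point.
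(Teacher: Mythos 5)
Your argument is, structurally, the paper's own proof: the paper likewise peels off the last coordinate (via a projection $\eta$ onto $\CP^{(s-1)}$), forms exactly your partial sums $a_{j},b_{j}$, bounds them by $\Per^{(s-1)}$, applies Lemma \ref{supersequence1} with $c=\Per^{(s-1)}$ to conclude $a_{j}=b_{j}$, and then inducts on the number of coordinates (its base case $s=2$ is your base case $r=1$ seen after one peeling step). So far the two proofs coincide.

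The integrality worry you flag at the end is not pedantry: it is a genuine gap, and it sits in the paper's proof exactly as in yours. Lemma \ref{supersequence1} requires the coefficients to be nonnegative \emph{integers} (its pinch step uses $a_{M}\geq 1+b_{M}$), whereas under the stated hypothesis $f\colon\{(i,j)\}\rightarrow\mathbb{R}_{+}$ the partial permanents $a_{j},b_{j}$ are merely positive reals; and indeed Lemma \ref{supersequence2} is \emph{false} as stated. Take $s=2$, $J_{1}=J_{2}=1$, $f_{1,0}=\tfrac{1}{2}$, $f_{1,1}=1$, $f_{2,0}=1$, $f_{2,1}=3$: then $\{f_{1,j}\}_{j=0}^{1}$ is a $\Per^{(0)}$-super sequence (since $1>\tfrac{1}{2}$), $\Per^{(1)}=\tfrac{3}{2}$, and $\{f_{2,j}\}_{j=0}^{1}$ is a $\Per^{(1)}$-super sequence (since $3>\tfrac{3}{2}$); yet $A_{1}=\{(0,0),(1,0)\}$ and $A_{2}=\{(0,1)\}$ give the common value
\begin{equation*}
f_{1,0}f_{2,0}+f_{1,1}f_{2,0}=\tfrac{1}{2}+1=\tfrac{3}{2}=\tfrac{1}{2}\cdot 3=f_{1,0}f_{2,1}
\end{equation*}
in (\ref{supersequence2eq1}) while $A_{1}\neq A_{2}$. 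The statement and both proofs are repaired by adding the hypothesis that $f$ takes positive integer values, which makes every $a_{j},b_{j}$ a nonnegative integer as Lemma \ref{supersequence1} demands; this costs nothing downstream, since in every application of the lemma (Lemmas \ref{perphi2} and \ref{perphiodd}) one has $f_{i,j}=\phi(p_{i}^{j})\in\mathbb{Z}_{\geq 1}$. So your instinct to record explicitly where integrality of the $f_{i,j}$ enters is exactly right, and it resolves an imprecision the paper itself leaves open.
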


\begin{proof}
Set a mapping $$\eta:\tau=(\tau_{1},\tau_{2},\ldots,\tau_{s})\in\CP^{(s)}\mapsto(\tau_{1},\tau_{2},\ldots,\tau_{s-1})\in\CP^{(s-1)}.$$
For every $j\in\{0,[J_{s}]\}$, set
$$
\CP^{(s)}(j)=\{\tau\in\CP^{(s)}:\tau_{s}=j\}
$$
and
$$
A_{1}(j)=A_{1}\cap\CP^{(s)}(j),\;A_{2}(j)=A_{2}\cap\CP^{(s)}(j).
$$
In addition, setting
$$
a_{j}=\sum_{\tau\in\CP^{(s)}}\chi_{A_{1}(j)}(\tau)\prod_{i=1}^{s-1}f_{i,\tau_{i}},
$$
we have
\begin{align*}
a_{j}&=\sum_{\tau\in\CP^{(s)}(j)}\chi_{A_{1}(j)}(\tau)\prod_{i=1}^{s-1}f_{i,\tau_{i}}
\\&=\sum_{\sigma\in\CP^{(s-1)}}\chi_{\eta(A_{1}(j))}(\sigma)\prod_{i=1}^{s-1}f_{i,\sigma_{i}}\tag{$\eta$ being a bijection from $\mathcal{J}^{(s)}(j)$ to $\mathcal{J}^{(s-1)}$}
\\&\leq\sum_{\sigma\in\CP^{(s-1)}}\prod_{i=1}^{s-1}f_{i,\sigma_{i}}=\Per^{(s-1)}.\tag{by (\ref{defper})}
\end{align*}
Similarly, for every $j\in\{0,[J_{s}]\}$, set $b_{j}=\sum_{\tau\in\CP^{(s)}}\chi_{A_{2}(j)}(\tau)\prod_{i=1}^{s-1}f_{i,\tau_{i}}\leq\Per^{(s-1)}$. These two inequalities about $a_{j}$ and $b_{j}$ correspond to the condition (1) of Lemma \ref{supersequence1}. Moreover,
\begin{align*}
\sum_{\tau\in\CP^{(s)}}\chi_{A_{1}}(\tau)\prod_{i=1}^{s}f_{i,\tau_{i}}
&=\sum_{\tau\in \CP^{(s)}}\sum_{j=0}^{J_{s}}\chi_{A_{1}(j)}(\tau)(\prod_{i=1}^{s-1}f_{i,\tau_{i}})f_{s,j}
\\&=\sum_{j=0}^{J_{s}}\sum_{\tau\in \CP^{(s)}}\chi_{A_{1}(j)}(\tau)(\prod_{i=1}^{s-1}f_{i,\tau_{i}})f_{s,j}
\\&=\sum_{j=0}^{J_{s}}a_{j}f_{s,j}.
\end{align*}
Similarly,
$$
\sum_{\tau\in\CP^{(s)}}\chi_{A_{2}}(\tau)\prod_{i=1}^{s}f_{i,\tau_{i}}=\sum_{j=0}^{J_{s}}b_{j}f_{s,j}.
$$
By (\ref{supersequence2eq1}),
$$
\sum_{j=0}^{J_{s}}a_{j}f_{s,j}=\sum_{j=0}^{J_{s}}b_{j}f_{s,j},
$$
which is the condition (2) of Lemma \ref{supersequence1}. Recall the given condition that $\{f_{s,j}\}_{j=0}^{J_{s}}$ is a $\mathcal{P}^{(s-1)}$-super sequence.
By Lemma \ref{supersequence1}, $\forall j\in\{0,[J_{s}]\}$, $
a_{j}=b_{j}$, which means that
$$\sum_{\tau\in\CP^{(s)}}\chi_{A_{1}(j)}(\tau)\prod_{i=1}^{s-1}f_{i,\tau_{i}}
=\sum_{\tau\in\CP^{(s)}}\chi_{A_{2}(j)}(\tau)\prod_{i=1}^{s-1}f_{i,\tau_{i}}.$$
Note that $\forall j\in\{0,[J_{s}]\}$, $\eta$ is a bijection from $\CP^{(s)}(j)$ to $\CP^{(s-1)}$. We have $\forall j\in\{0,[J_{s}]\}$,
\begin{equation}\label{supersequence2eq2}
\sum_{\sigma\in\CP^{(s-1)}}\chi_{\eta(A_{1}(j))}(\sigma)\prod_{i=1}^{s-1}f_{i,\sigma_{i}}
=\sum_{\sigma\in\CP^{(s-1)}}\chi_{\eta(A_{2}(j))}(\sigma)\prod_{i=1}^{s-1}f_{i,\sigma_{i}}.
\end{equation}

In the following, we give our proof by induction on $s$. When $s=2$, (\ref{supersequence2eq2}) means that $\forall j\in\{0,[J_{2}]\}$,
$$
\sum_{k=0}^{J_{1}}\chi_{\eta(A_{1}(j))}(k)f_{1,k}=\sum_{k=0}^{J_{1}}\chi_{\eta(A_{2}(j))}(k)f_{1,k},
$$
which corresponds to the condition (2) of Lemma \ref{supersequence1}. Note that $\forall k\in\{0,[J_{1}]\}$, $$0\leq\chi_{\eta(A_{1}(j))}(k),\chi_{\eta(A_{2}(j))}(k)\leq1=\mathcal{P}^{(0)},$$
which corresponds to the condition (1) of Lemma \ref{supersequence1}. Recall the given condition that $\{f_{1,j}\}_{j=0}^{J_{1}}$ is a $\mathcal{P}^{(0)}$-super sequence. By Lemma \ref{supersequence1}, we have $\forall j\in\{0,[J_{2}]\}$, $\forall k\in\{0,[J_{1}]\}$, $\chi_{\eta(A_{1}(j))}(k)=\chi_{\eta(A_{2}(j))}(k)$ and so $\forall j\in\{0,[J_{2}]\}$, $A_{1}(j)=A_{2}(j)$. Then
$$A_{1}=\bigcup_{j=0}^{J_{2}}A_{1}(j)=\bigcup_{j=0}^{J_{2}}A_{2}(j)=A_{2}.$$
This completes the proof of the basis where $s=2$. Now assume that the assertion is true for $(s-1)$. For each $j\in\{0,[J_{s}]\}$, by (\ref{supersequence2eq2}),
using the induction hypothesis, we have $\eta(A_{1}(j))=\eta(A_{2}(j))$ and so $A_{1}(j)=A_{2}(j)$.
Then
$$A_{1}=\bigcup_{j=0}^{J_{s}}A_{1}(j)=\bigcup_{j=0}^{J_{s}}A_{2}(j)=A_{2}.$$ 
This completes the proof.\qed

\end{proof}







\begin{lemma}
\label{perphi1}
Let $n\geq1$ be an odd integer with prime factorisation $n=p_{1}^{J_{1}}p_{2}^{J_{2}}\cdots p_{s}^{J_{s}}$ where $s\geq2$. Set a mapping
$$
f:(i,j)\in\{(i,j):i\in[s],j\in\{0,[J_{i}]\}\}\mapsto\phi(p_{i}^{j})\in\mathbb{R}_{+}.
$$
Then
$$
\Per^{(s)}=n.
$$

\end{lemma}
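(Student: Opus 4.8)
The plan is to exploit the product structure of the defining sum. By (\ref{defper}) we have $\Per^{(s)}=\sum_{\tau\in\CP^{(s)}}\prod_{i=1}^{s}f_{i,\tau_{i}}$ with $f_{i,j}=\phi(p_{i}^{j})$, and since $\CP^{(s)}=\{0,[J_{1}]\}\times\cdots\times\{0,[J_{s}]\}$ is a Cartesian product, the generalized distributive law lets me factor this as
$$
\Per^{(s)}=\prod_{i=1}^{s}\left(\sum_{j=0}^{J_{i}}\phi(p_{i}^{j})\right).
$$
First I would justify this factorization by induction on $r$, using the recursion $\Per^{(r)}=\Per^{(r-1)}\cdot\big(\sum_{j=0}^{J_{r}}\phi(p_{r}^{j})\big)$, which comes from splitting each $\tau\in\CP^{(r)}$ into its first $r-1$ coordinates (ranging over $\CP^{(r-1)}$) and its last coordinate $\tau_{r}\in\{0,[J_{r}]\}$; the base case is $\Per^{(0)}=1$.

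Next I would evaluate each inner sum. The divisors of the prime power $p_{i}^{J_{i}}$ are exactly $1,p_{i},p_{i}^{2},\ldots,p_{i}^{J_{i}}$, that is, $\mathcal{D}_{[p_{i}^{J_{i}}]}=\{p_{i}^{j}:0\leq j\leq J_{i}\}$. Applying identity (\ref{phiproperty}) with $p_{i}^{J_{i}}$ in place of $n$ therefore gives $\sum_{j=0}^{J_{i}}\phi(p_{i}^{j})=\sum_{d\in\mathcal{D}_{[p_{i}^{J_{i}}]}}\phi(d)=p_{i}^{J_{i}}$. Substituting into the factorization above yields $\Per^{(s)}=\prod_{i=1}^{s}p_{i}^{J_{i}}=n$, as required.

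There is no serious obstacle here: the only step needing care is the passage from the sum over the Cartesian product to the product of one-dimensional sums, and even that is routine once phrased as the recursion above. I would note that the hypothesis that $n$ is odd plays no role in this particular computation — identity (\ref{phiproperty}) holds for every $n\geq1$ and the divisor structure of $p_{i}^{J_{i}}$ is the same when $p_{i}=2$ — so the statement in fact holds verbatim for an arbitrary prime power factorisation; the oddness is merely inherited from the setting of part (a) of Theorem \ref{main}.
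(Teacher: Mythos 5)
Your proof is correct, but it takes a slightly different route from the paper. The paper reindexes the defining sum via the bijection $\psi:p_{1}^{k_{1}}\cdots p_{s}^{k_{s}}\in\mathcal{D}_{[n]}\mapsto(k_{1},\ldots,k_{s})\in\CP^{(s)}$, uses multiplicativity of $\phi$ to identify $\prod_{i=1}^{s}\phi(p_{i}^{k_{i}})$ with $\phi(d)$, and then applies the identity (\ref{phiproperty}) once, to $n$ itself, getting $\Per^{(s)}=\sum_{d\in\mathcal{D}_{[n]}}\phi(d)=n$ in a single pass. You instead factor the multi-indexed sum by the distributive law, $\Per^{(s)}=\prod_{i=1}^{s}\bigl(\sum_{j=0}^{J_{i}}\phi(p_{i}^{j})\bigr)$, justified by the recursion $\Per^{(r)}=\Per^{(r-1)}\cdot\sum_{j=0}^{J_{r}}f_{r,j}$, and then apply (\ref{phiproperty}) only in the prime-power case $\sum_{j=0}^{J_{i}}\phi(p_{i}^{j})=p_{i}^{J_{i}}$. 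Your version buys something modest but real: it never invokes multiplicativity of $\phi$ (the factorization is purely combinatorial, since $f_{i,j}=\phi(p_{i}^{j})$ by definition), and the prime-power instance of (\ref{phiproperty}) is available directly by the telescoping computation already recorded in (\ref{propertyofphi1})-style arithmetic, so the argument is essentially self-contained; the paper's version is shorter and its reindexing-by-bijection template recurs elsewhere in the paper (e.g.\ in Lemma \ref{perphiodd}). Your closing observation is also accurate: neither argument uses oddness of $n$, which is genuinely needed only later, in Lemma \ref{perphi2}, where the strict inequality $\prod_{i=1}^{r-1}p_{i}^{J_{i}}<p_{r}-1$ is extracted from $\prod_{i=1}^{r-1}p_{i}^{J_{i}}<p_{r}$ by a parity argument.
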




\begin{proof}
Set a bijection
$$\psi:p_{1}^{k_{1}}p_{2}^{k_{2}}\cdots p_{s}^{k_{s}}\in\mathcal{D}_{[n]}\mapsto (k_{1},k_{2},\ldots,k_{s})\in\CP^{(s)}.$$
We have $\forall d\in\mathcal{D}_{[n]}$,
\begin{equation}\label{perphi1eq1}
\phi(d)=\prod_{i=1}^{s}\phi(p_{i}^{\psi(d)_{i}}),
\end{equation}
where $\psi(d)_{i}$ denotes the $i$-th entry of $\psi(d)$. Then
\begin{align*}
\Per^{(s)}
&=\sum_{\tau\in \CP^{(s)}}\prod_{i=1}^{s}f_{i,\tau_{i}}\tag{by (\ref{defper})}
\\&=\sum_{\tau\in\CP^{(s)}}\prod_{i=1}^{s}\phi(p_{i}^{\tau_{i}})
\\&=\sum_{d\in\mathcal{D}_{[n]}}\prod_{i=1}^{s}\phi(p_{i}^{\psi(d)_{i}})\tag{$\psi$ being a bijection}
\\&=\sum_{d\in\mathcal{D}_{[n]}}\phi(d)\tag{by (\ref{perphi1eq1})}
\\&=n\tag{by (\ref{phiproperty})}.
\end{align*}
This completes the proof.\qed

\end{proof}

Let $p$ be a prime and let $t\geq1$ be an integer. We have
\begin{equation}\label{propertyofphi1}
(p-1)\sum_{j=0}^{t-1}\phi(p^{j})=(p-1)[1+\sum_{j=1}^{t-1}p^{j-1}(p-1)]=(p-1)(1+p^{t-1}-1)=\phi(p^{t}).
\end{equation}

\begin{lemma}
\label{perphi2}
Let $n\geq1$ be an odd integer with prime factorisation $n=p_{1}^{J_{1}}p_{2}^{J_{2}}\cdots p_{s}^{J_{s}}$ where $s\geq2$ and $\forall r\in[s-1]$,
\begin{equation}\label{perphi2eq1}
\prod_{i=1}^{r}p_{i}^{J_{i}}<p_{r+1}.
\end{equation}
Set a mapping
$$
f:(i,j)\in\{(i,j):i\in[s],j\in\{0,[J_{i}]\}\}\mapsto\phi(p_{i}^{j})\in\mathbb{R}_{+}.
$$
Then $\forall r\in[s]$, $\{f_{r,j}\}_{j=0}^{J_{r}}$ is a $\mathcal{P}^{(r-1)}$-super sequence.
\end{lemma}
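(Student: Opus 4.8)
The plan is to reduce the super-sequence inequality to a single clean numerical condition and then discharge it using the hypothesis, exploiting the oddness of $n$ to obtain the little extra slack that is needed.

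First I would evaluate $\mathcal{P}^{(r-1)}$ in closed form. Running the same bijection argument as in the proof of Lemma \ref{perphi1}, but restricted to the divisors of the partial product $\prod_{i=1}^{r-1}p_{i}^{J_{i}}$ instead of $n$, gives
\[
\mathcal{P}^{(r-1)}=\sum_{d\mid \prod_{i=1}^{r-1}p_{i}^{J_{i}}}\phi(d)=\prod_{i=1}^{r-1}p_{i}^{J_{i}},
\]
by (\ref{phiproperty}). For $r=1$ this is the empty product, consistent with $\mathcal{P}^{(0)}=1$.

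Next, fix $r\in[s]$ and $t\in[J_{r}]$. By definition of a $\mathcal{P}^{(r-1)}$-super sequence I must verify $\phi(p_{r}^{t})>\mathcal{P}^{(r-1)}\sum_{j=0}^{t-1}\phi(p_{r}^{j})$. Formula (\ref{propertyofphi1}) yields $\sum_{j=0}^{t-1}\phi(p_{r}^{j})=\phi(p_{r}^{t})/(p_{r}-1)$, so dividing through by the positive quantity $\phi(p_{r}^{t})$ shows that the required inequality is equivalent to
\[
p_{r}-1>\mathcal{P}^{(r-1)}=\prod_{i=1}^{r-1}p_{i}^{J_{i}}.
\]
This reduction is the heart of the matter; everything now rests on this single estimate.

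The remaining task is to establish that estimate, and this is where I expect the only real subtlety to lie. For $r=1$ it reads $p_{1}-1>1$, which holds because $n$ odd forces $p_{1}\geq3$. For $r\geq2$, hypothesis (\ref{perphi2eq1}) applied with index $r-1\in[s-1]$ gives only $\prod_{i=1}^{r-1}p_{i}^{J_{i}}<p_{r}$, whereas I need the strictly stronger bound with $p_{r}-1$ on the right. The hard part is upgrading $<p_{r}$ to $<p_{r}-1$, and this is exactly where the oddness assumption is indispensable: since $n$ is odd, both $\prod_{i=1}^{r-1}p_{i}^{J_{i}}$ and $p_{r}$ are odd, hence an odd integer lying strictly below $p_{r}$ can be at most $p_{r}-2$, giving $\prod_{i=1}^{r-1}p_{i}^{J_{i}}\leq p_{r}-2<p_{r}-1$. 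Combining the base case and this parity argument establishes the displayed inequality for every $r\in[s]$, and with it the claim that $\{f_{r,j}\}_{j=0}^{J_{r}}$ is a $\mathcal{P}^{(r-1)}$-super sequence.
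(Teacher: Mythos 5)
Your proposal is correct and takes essentially the same route as the paper's proof: you compute $\mathcal{P}^{(r-1)}=\prod_{i=1}^{r-1}p_{i}^{J_{i}}$ via the divisor-sum identity (the paper invokes Lemma \ref{perphi1}), collapse $\sum_{j=0}^{t-1}\phi(p_{r}^{j})$ using (\ref{propertyofphi1}), and upgrade the hypothesis $\prod_{i=1}^{r-1}p_{i}^{J_{i}}<p_{r}$ to $\leq p_{r}-2<p_{r}-1$ by parity of odd integers. The only difference is presentational: you spell out the parity step that the paper compresses into the annotation ``by (\ref{perphi2eq1}) and the assumption that all primes $p_{i}$ are odd.''
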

\begin{proof}
We first give the proof of the case where $r=1$. For any $t\in[J_{1}]$, by (\ref{propertyofphi1}), we have
\begin{align*}
\mathcal{P}^{(0)}\sum_{j=0}^{t-1}f_{1,j}=\sum_{j=0}^{t-1}f_{1,j}=\sum_{j=0}^{t-1}\phi(p_{1}^{j})<(p_{1}-1)\sum_{j=0}^{t-1}\phi(p_{1}^{j})=\phi(p_{1}^{t})
=f_{1,t}.
\end{align*}
We now give the proof of the case where $2\leq r\leq s$. For any $2\leq r\leq s$ and any $t\in[J_{r}]$, we have
\begin{align*}
\Per^{(r-1)}\sum_{j=0}^{t-1}f_{r,j}
&=\prod_{i=1}^{r-1}p_{i}^{J_{i}}\sum_{j=0}^{t-1}\phi(p_{r}^{j})\tag{by Lemma \ref{perphi1}}
\\&<(p_{r}-1)\sum_{j=0}^{t-1}\phi(p_{r}^{j})\tag{by (\ref{perphi2eq1}) and the assumption that all primes $p_{i}$ are odd}
\\&=\phi(p_{r}^{t})\tag{by (\ref{propertyofphi1})}
\\&=f_{r,t}.
\end{align*}
This completes the proof.\qed

\end{proof}


\begin{lemma}
\label{perphiodd}
Let $n\geq1$ be an odd integer with prime factorisation $n=p_{1}^{J_{1}}p_{2}^{J_{2}}\cdots p_{s}^{J_{s}}$ where $s\geq2$ and $\forall r\in[s-1]$, $\prod_{i=1}^{r}p_{i}^{J_{i}}<p_{r+1}$. Let $\mathcal{D}_{S_{1}}$ and $\mathcal{D}_{S_{2}}$ be two subsets of $\mathcal{D}_{[n]}$ such that
\begin{equation}\label{perphioddeq1}
\sum_{d\in\mathcal{D}_{[n]}}\chi_{\mathcal{D}_{S_{1}}}(d)\phi(n/d)
=\sum_{d\in\mathcal{D}_{[n]}}\chi_{\mathcal{D}_{S_{2}}}(d)\phi(n/d).
\end{equation}
Then $\mathcal{D}_{S_{1}}=\mathcal{D}_{S_{2}}$.

\end{lemma}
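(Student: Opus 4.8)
The plan is to recast the hypothesis (\ref{perphioddeq1}) into the exact shape of (\ref{supersequence2eq1}) and then invoke Lemma \ref{supersequence2}. The whole argument hinges on the multiplicativity of $\phi$ across the prime-power factors of $n/d$, together with the super-sequence structure already established in Lemma \ref{perphi2}.

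First I would reuse the mapping $f:(i,j)\mapsto\phi(p_i^j)$ from Lemmas \ref{perphi1} and \ref{perphi2}. Lemma \ref{perphi2} tells us that for every $r\in[s]$ the sequence $\{f_{r,j}\}_{j=0}^{J_r}$ is a $\Per^{(r-1)}$-super sequence, which is precisely the hypothesis that Lemma \ref{supersequence2} demands. So the only remaining task is to exhibit subsets $A_1,A_2\subseteq\CP^{(s)}$ for which (\ref{perphioddeq1}) reads as (\ref{supersequence2eq1}).

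To this end I would introduce the bijection
$$
\psi':d=p_1^{k_1}p_2^{k_2}\cdots p_s^{k_s}\in\mathcal{D}_{[n]}\mapsto(J_1-k_1,J_2-k_2,\ldots,J_s-k_s)\in\CP^{(s)}.
$$
Since $n/d=p_1^{J_1-k_1}\cdots p_s^{J_s-k_s}$, multiplicativity of $\phi$ gives
$$
\phi(n/d)=\prod_{i=1}^{s}\phi\bigl(p_i^{J_i-k_i}\bigr)=\prod_{i=1}^{s}f_{i,\psi'(d)_i}.
$$
Putting $A_t=\psi'(\mathcal{D}_{S_t})$ for $t\in\{1,2\}$ and using that $\psi'$ is a bijection (so that $\chi_{\mathcal{D}_{S_t}}(d)=\chi_{A_t}(\psi'(d))$), the substitution $\tau=\psi'(d)$ turns each side of (\ref{perphioddeq1}) into $\sum_{\tau\in\CP^{(s)}}\chi_{A_t}(\tau)\prod_{i=1}^{s}f_{i,\tau_i}$. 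Thus (\ref{perphioddeq1}) is exactly (\ref{supersequence2eq1}), and Lemma \ref{supersequence2} yields $A_1=A_2$; applying $(\psi')^{-1}$ then gives $\mathcal{D}_{S_1}=\mathcal{D}_{S_2}$.

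I do not expect a genuine obstacle here: the combinatorial content of the lemma is entirely carried by Lemma \ref{supersequence2}, whose super-sequence hypothesis was prepared by Lemma \ref{perphi2}. The only point demanding care is the choice of bijection, since one must encode $\phi(n/d)$ rather than $\phi(d)$; this is why the exponents $J_i-k_i$, complementary to those used for $\psi$ in Lemma \ref{perphi1}, appear. Everything else is a routine change of summation variable.
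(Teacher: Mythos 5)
Your proposal is correct and matches the paper's own proof essentially verbatim: the paper also uses the bijection $d=p_{1}^{l_{1}}\cdots p_{s}^{l_{s}}\mapsto(J_{1}-l_{1},\ldots,J_{s}-l_{s})$, multiplicativity of $\phi$ to write $\phi(n/d)=\prod_{i}f_{i,\psi(d)_{i}}$, and then Lemmas \ref{perphi2} and \ref{supersequence2} to conclude $\psi(\mathcal{D}_{S_{1}})=\psi(\mathcal{D}_{S_{2}})$ and hence $\mathcal{D}_{S_{1}}=\mathcal{D}_{S_{2}}$. You correctly identified the one subtle point (complementary exponents encoding $\phi(n/d)$ rather than $\phi(d)$), so nothing further is needed.
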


\begin{proof}
Set a mapping
$$
f:(i,j)\in\{(i,j):i\in[s],j\in\{0,[J_{i}]\}\}\mapsto\phi(p_{i}^{j})\in\mathbb{R}_{+}.
$$
Set a bijection
$$\psi:p_{1}^{l_{1}}p_{2}^{l_{2}}\cdots p_{s}^{l_{s}}\in\mathcal{D}_{[n]}\mapsto (J_{1}-l_{1},J_{2}-l_{2},\ldots,J_{s}-l_{s})\in\CP^{(s)}.$$
We have $\forall d\in\mathcal{D}_{[n]}$,
\begin{equation}\label{perphioddeq2}
n/d=\prod_{i=1}^{s}p_{i}^{\psi(d)_{i}}.
\end{equation}
Then
\begin{align*}
\sum_{d\in\mathcal{D}_{[n]}}\chi_{\mathcal{D}_{S_{1}}}(d)\phi(n/d)
&=\sum_{d\in\mathcal{D}_{[n]}}\chi_{\mathcal{D}_{S_{1}}}(d)\phi(\prod_{i=1}^{s}p_{i}^{\psi(d)_{i}})\tag{by the above equation}
\\&=\sum_{d\in\mathcal{D}_{[n]}}\chi_{\psi(\mathcal{D}_{S_{1}})}(\psi(d))\phi(\prod_{i=1}^{s}p_{i}^{\psi(d)_{i}})\tag{$\psi$ being a bijection}
\\&=\sum_{d\in\mathcal{D}_{[n]}}\chi_{\psi(\mathcal{D}_{S_{1}})}(\psi(d))\prod_{i=1}^{s}\phi(p_{i}^{\psi(d)_{i}})\tag{$\phi$ being a multplicative arithmetic function}
\\&=\sum_{\tau\in\CP^{(s)}}\chi_{\psi(\mathcal{D}_{S_{1}})}(\tau)\prod_{i=1}^{s}\phi(p_{i}^{\tau_{i}}),\tag{$\psi$ being a bijection}
\end{align*}
where $\psi(\mathcal{D}_{S_{1}})$ denotes the image of $\mathcal{D}_{S_{1}}$ under $\psi$. Similarly,
$$
\sum_{d\in\mathcal{D}_{[n]}}\chi_{\mathcal{D}_{S_{2}}}(d)\phi(n/d)
=\sum_{\tau\in\CP^{(s)}}\chi_{\psi(\mathcal{D}_{S_{2}})}(\tau)\prod_{i=1}^{s}\phi(p_{i}^{\tau_{i}}).
$$
By (\ref{perphioddeq1}),
$$
\sum_{\tau\in\CP^{(s)}}\chi_{\psi(\mathcal{D}_{S_{1}})}(\tau)\prod_{i=1}^{s}\phi(p_{i}^{\tau_{i}})
=\sum_{\tau\in\CP^{(s)}}\chi_{\psi(\mathcal{D}_{S_{2}})}(\tau)\prod_{i=1}^{s}\phi(p_{i}^{\tau_{i}}).
$$
By Lemmas \ref{perphi2} and \ref{supersequence2}, $\psi(\mathcal{D}_{S_{1}})=\psi(\mathcal{D}_{S_{2}})$ and so $\mathcal{D}_{S_{1}}=\mathcal{D}_{S_{2}}$.\qed


\end{proof}

\begin{lemma}
\label{perphieven}
Let $n\geq1$ be an even integer with prime factorisation $n=2p_{1}^{J_{1}}p_{2}^{J_{2}}\cdots p_{s}^{J_{s}}$ where $s\geq2$ and $\forall r\in[s-1]$, $\prod_{i=1}^{r}p_{i}^{J_{i}}<p_{r+1}$. Let $\mathcal{D}_{S_{1}}$ and $\mathcal{D}_{S_{2}}$ be two subsets of $\mathcal{D}_{[n]}$ such that
\begin{itemize}
\item[\rm{(1)}]
$
\sum\limits_{d\in\mathcal{D}_{[n]}\cap\mathcal{O}}\chi_{\mathcal{D}_{S_{1}}}(d)\phi(n/d)
=\sum\limits_{d\in\mathcal{D}_{[n]}\cap\mathcal{O}}\chi_{\mathcal{D}_{S_{2}}}(d)\phi(n/d)
$; and
\item[\rm{(2)}]
$
\sum\limits_{d\in\mathcal{D}_{[n]}\cap\mathcal{E}}\chi_{D_{S_{1}}}(d)\phi(n/d)
=\sum\limits_{d\in\mathcal{D}_{[n]}\cap\mathcal{E}}\chi_{D_{S_{2}}}(d)\phi(n/d)
$.
\end{itemize}
Then $\mathcal{D}_{S_{1}}=\mathcal{D}_{S_{2}}$.
\end{lemma}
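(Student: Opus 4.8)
The plan is to reduce both hypotheses to the purely odd situation already settled in Lemma \ref{perphiodd}. Write $m=p_{1}^{J_{1}}p_{2}^{J_{2}}\cdots p_{s}^{J_{s}}$, so that $n=2m$ with $m$ odd. Since every $p_{i}$ is odd, $m$ is an odd integer with $s\geq2$ satisfying $\prod_{i=1}^{r}p_{i}^{J_{i}}<p_{r+1}$ for all $r\in[s-1]$; hence Lemma \ref{perphiodd} applies verbatim with $m$ in place of $n$. First I would record the clean splitting of divisors: the odd divisors $\mathcal{D}_{[n]}\cap\mathcal{O}$ are exactly the divisors of $m$, while the even divisors $\mathcal{D}_{[n]}\cap\mathcal{E}$ are exactly the integers $2d'$ with $d'\mid m$, the map $d'\mapsto 2d'$ being a bijection from $\mathcal{D}_{[m]}$ onto $\mathcal{D}_{[n]}\cap\mathcal{E}$.

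The step I expect to carry the weight of the argument is the observation that in \emph{both} sums the totient factor collapses to a totient of $m$. Indeed, for an odd divisor $d\mid m$ we have $n/d=2(m/d)$ with $m/d$ odd, so $\phi(n/d)=\phi(2)\phi(m/d)=\phi(m/d)$ because $\phi(2)=1$; and for an even divisor $d=2d'$ we have $n/d=m/d'$, so $\phi(n/d)=\phi(m/d')$. Once $\phi(n/d)$ is rewritten as a totient indexed by a divisor of $m$, each of hypotheses (1) and (2) becomes an instance of equation (\ref{perphioddeq1}) for the odd modulus $m$.

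Concretely, I would set $\mathcal{D}_{S_{i}}^{\mathcal{O}}=\mathcal{D}_{S_{i}}\cap\mathcal{O}\subseteq\mathcal{D}_{[m]}$ and $\mathcal{D}_{S_{i}}^{\mathcal{E}}=\{d':2d'\in\mathcal{D}_{S_{i}}\}\subseteq\mathcal{D}_{[m]}$ for $i\in\{1,2\}$. Using the collapse above, hypothesis (1) reads
\begin{equation*}
\sum_{d\mid m}\chi_{\mathcal{D}_{S_{1}}^{\mathcal{O}}}(d)\,\phi(m/d)=\sum_{d\mid m}\chi_{\mathcal{D}_{S_{2}}^{\mathcal{O}}}(d)\,\phi(m/d),
\end{equation*}
and hypothesis (2), after substituting $d=2d'$, reads the same with $\mathcal{D}_{S_{i}}^{\mathcal{E}}$ and summation variable $d'\mid m$ in place of $\mathcal{D}_{S_{i}}^{\mathcal{O}}$. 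Applying Lemma \ref{perphiodd} to $m$ twice then yields $\mathcal{D}_{S_{1}}^{\mathcal{O}}=\mathcal{D}_{S_{2}}^{\mathcal{O}}$ and $\mathcal{D}_{S_{1}}^{\mathcal{E}}=\mathcal{D}_{S_{2}}^{\mathcal{E}}$.

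Finally I would reassemble the pieces: $\mathcal{D}_{S_{1}}^{\mathcal{E}}=\mathcal{D}_{S_{2}}^{\mathcal{E}}$ gives $\mathcal{D}_{S_{1}}\cap\mathcal{E}=\mathcal{D}_{S_{2}}\cap\mathcal{E}$ since $d'\mapsto 2d'$ is a bijection, and $\mathcal{D}_{S_{1}}^{\mathcal{O}}=\mathcal{D}_{S_{2}}^{\mathcal{O}}$ gives equality of the odd parts, whence $\mathcal{D}_{S_{1}}=\mathcal{D}_{S_{2}}$. The only point requiring care is the bookkeeping of the two bijections (the identity on odd divisors and halving on even divisors), so that the super-sequence machinery behind Lemma \ref{perphiodd} is invoked on the correct subsets of $\mathcal{D}_{[m]}$; no new inequality beyond those furnished by Lemmas \ref{perphi2} and \ref{perphiodd} needs to be established.
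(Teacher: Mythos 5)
Your proposal is correct and follows essentially the same route as the paper's proof: both arguments split the divisors of $n$ into odd ones (divisors of $m=n/2$) and even ones ($2d'$ with $d'\mid m$), use $\phi(2)=1$ and multiplicativity to rewrite each $\phi(n/d)$ as a totient of a divisor of $m$, and then apply Lemma \ref{perphiodd} twice to the odd modulus $m$ before reassembling via the halving bijection. Your explicit remark that the hypothesis $\prod_{i=1}^{r}p_{i}^{J_{i}}<p_{r+1}$ is inherited by $m$, so that Lemma \ref{perphiodd} applies verbatim, is a point the paper leaves implicit but is handled correctly in both versions.
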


\begin{proof}
We first prove that $\mathcal{D}_{S_{1}}\cap\mathcal{O}=\mathcal{D}_{S_{2}}\cap\mathcal{O}$. We have
\begin{align*}
\sum_{d\in\mathcal{D}_{[n]}\cap\mathcal{O}}\chi_{\mathcal{D}_{S_{1}}}(d)\phi(n/d)
&=\sum_{d\in\mathcal{D}_{[n]}\cap\mathcal{O}}\chi_{\mathcal{D}_{S_{1}}}(d)\phi(2n/2d)
\\&=\sum_{d\in\mathcal{D}_{[n]}\cap\mathcal{O}}\chi_{\mathcal{D}_{S_{1}}}(d)\phi(2)\phi(n/2d)\tag{$\phi$ being a multiplicative arithmetic function}
\\&=\sum_{d\in\mathcal{D}_{[n]}\cap\mathcal{O}}\chi_{\mathcal{D}_{S_{1}}}(d)\phi(n/2d).
\end{align*}
Similarly, we have
$$
\sum_{d\in\mathcal{D}_{[n]}\cap\mathcal{O}}\chi_{\mathcal{D}_{S_{2}}}(d)\phi(n/d)
=\sum_{d\in\mathcal{D}_{[n]}\cap\mathcal{O}}\chi_{\mathcal{D}_{S_{2}}}(d)\phi(n/2d).
$$
By condition (1),
$$
\sum_{d\in\mathcal{D}_{[n]}\cap\mathcal{O}}\chi_{\mathcal{D}_{S_{1}}}(d)\phi(n/2d)
=\sum_{d\in\mathcal{D}_{[n]}\cap\mathcal{O}}\chi_{\mathcal{D}_{S_{2}}}(d)\phi(n/2d).
$$
Note that $\mathcal{D}_{[n]}\cap\mathcal{O}=\{d\in[n/2]:d|(n/2)\}$. By Lemma \ref{perphiodd}, $\mathcal{D}_{S_{1}}\cap\mathcal{O}=\mathcal{D}_{S_{2}}\cap\mathcal{O}$.

We now prove that $\mathcal{D}_{S_{1}}\cap\mathcal{E}=\mathcal{D}_{S_{2}}\cap\mathcal{E}$. We have
\begin{align*}
\sum_{d\in\mathcal{D}_{[n]}\cap\mathcal{E}}\chi_{\mathcal{D}_{S_{1}}}(d)\phi(n/d)
&=\sum_{d\in2\cdot(\mathcal{D}_{[n]}\cap\mathcal{O})}\chi_{\mathcal{D}_{S_{1}}}(d)\phi(n/d)
\\&=\sum_{d\in\mathcal{D}_{[n]}\cap\mathcal{O}}\chi_{\mathcal{D}_{S_{1}}}(2d)\phi(n/2d)
\\&=\sum_{d\in\mathcal{D}_{[n]}\cap\mathcal{O}}\chi_{\frac{1}{2}\cdot(\mathcal{D}_{S_{1}}\cap\mathcal{E})}(d)\phi(n/2d).
\end{align*}
Similarly, we have
$$
\sum_{d\in\mathcal{D}_{[n]}\cap\mathcal{E}}\chi_{\mathcal{D}_{S_{2}}}(d)\phi(n/d)
=\sum_{d\in\mathcal{D}_{[n]}\cap\mathcal{O}}\chi_{\frac{1}{2}\cdot(\mathcal{D}_{S_{2}}\cap\mathcal{E})}(d)\phi(n/2d).
$$
By condition (2),
$$
\sum_{d\in\mathcal{D}_{[n]}\cap\mathcal{O}}\chi_{\frac{1}{2}\cdot(\mathcal{D}_{S_{1}}\cap\mathcal{E})}(d)\phi(n/2d)
=\sum_{d\in\mathcal{D}_{[n]}\cap\mathcal{O}}\chi_{\frac{1}{2}\cdot(\mathcal{D}_{S_{2}}\cap\mathcal{E})}(d)\phi(n/2d).
$$
By Lemma \ref{perphiodd}, we have $\frac{1}{2}\cdot(\mathcal{D}_{S_{1}}\cap\mathcal{E})=\frac{1}{2}\cdot(\mathcal{D}_{S_{2}}\cap\mathcal{E})$ and so $\mathcal{D}_{S_{1}}\cap\mathcal{E}=\mathcal{D}_{S_{2}}\cap\mathcal{E}$.\qed

\end{proof}

\subsubsection{Proofs of (a) and (b) of Theorem \ref{main}}
Here we give proofs of (a) and (b) of Theorem \ref{main}.
\begin{theorem}\emph{(Theorem \ref{main} (a))}\label{maina}
Let $n\geq1$ be an odd integer with prime factorisation $n=p_{1}^{J_{1}}p_{2}^{J_{2}}\cdots p_{s}^{J_{s}}$ where $s\geq2$ and $\forall r\in[s-1]$, $\prod_{i=1}^{r}p_{i}^{J_{i}}<p_{r+1}$. Let $\mathcal{D}_{S_{1}}$ and $\mathcal{D}_{S_{2}}$ be two subsets of $\mathcal{D}_{[n]}\setminus\{n\}$. Then $\spec(\ICG(n,\mathcal{D}_{S_{1}}))=\spec(\ICG(n,\mathcal{D}_{S_{2}}))$ implies $\mathcal{D}_{S_{1}}=\mathcal{D}_{S_{2}}$.
\end{theorem}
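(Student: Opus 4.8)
The plan is to reduce the full isospectrality hypothesis to a single scalar identity and then invoke Lemma \ref{perphiodd}, which already packages the genuinely hard combinatorial work. The guiding observation is that, for the odd orders considered here, equality of spectra forces equality of the largest eigenvalue (the common valency), and for these special $n$ that one number already determines $\mathcal{D}_S$.

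Concretely, first I would extract from $\spec(\ICG(n,\mathcal{D}_{S_1}))=\spec(\ICG(n,\mathcal{D}_{S_2}))$ the equality $\lambda_n(S_1)=\lambda_n(S_2)$ via Lemma \ref{lambdan}. Next I would evaluate $\lambda_n(S)$ explicitly. Taking $k=n\equiv 0\pmod n$ in (\ref{ramanujan}) and (\ref{lambdaphimu}), each divisor $d\mid n$ contributes $\mathcal{R}_{n/d}(n)$; since $\gcd(n,n/d)=n/d$ we get $\tfrac{n/d}{\gcd(n,n/d)}=1$, so the coefficient collapses to $\tfrac{\phi(n/d)}{\phi(1)}\mu(1)=\phi(n/d)$, i.e. $\mathcal{R}_{n/d}(n)=\phi(n/d)$. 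Hence
$$
\lambda_n(S)=\sum_{d\in\mathcal{D}_S}\phi(n/d)=\sum_{d\in\mathcal{D}_{[n]}}\chi_{\mathcal{D}_S}(d)\,\phi(n/d),
$$
which is just the valency $|S|$ by (\ref{Scard}). The identity $\lambda_n(S_1)=\lambda_n(S_2)$ therefore reads verbatim as the hypothesis (\ref{perphioddeq1}) of Lemma \ref{perphiodd}.

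Finally, since $\mathcal{D}_{S_1},\mathcal{D}_{S_2}\subseteq\mathcal{D}_{[n]}\setminus\{n\}\subseteq\mathcal{D}_{[n]}$, I would apply Lemma \ref{perphiodd} directly to conclude $\mathcal{D}_{S_1}=\mathcal{D}_{S_2}$. No separate handling of the excluded divisor $n$ is required: $G_n(n)=\{0\}$ is the identity and never lies in a connection set, so discarding it from the index range changes nothing in the argument.

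The genuinely hard part is not in this theorem but in the lemma it rests on, namely the uniqueness of the $0/1$-representation $\sum_d\chi(d)\phi(n/d)$. This is precisely where the gap condition $\prod_{i=1}^r p_i^{J_i}<p_{r+1}$ is consumed, guaranteeing (Lemma \ref{perphi2}) that each $\{\phi(p_r^j)\}_{j=0}^{J_r}$ is a $\mathcal{P}^{(r-1)}$-super sequence and hence (Lemma \ref{supersequence2}) that the associated mixed-radix expansion is unique. It is worth emphasizing that the proof uses almost nothing about the spectrum beyond its top eigenvalue: for these orders the valency alone already distinguishes integral circulant graphs with distinct connection sets.
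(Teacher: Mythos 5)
Your proposal is correct and is essentially identical to the paper's proof: both extract $\lambda_{n}(S_{1})=\lambda_{n}(S_{2})$ from Lemma \ref{lambdan}, evaluate it via (\ref{ramanujan}) and (\ref{lambdaphimu}) as $\sum_{d\in\mathcal{D}_{S}}\phi(n/d)$, and then invoke Lemma \ref{perphiodd} to conclude $\mathcal{D}_{S_{1}}=\mathcal{D}_{S_{2}}$. Your added observations (the explicit computation $\mathcal{R}_{n/d}(n)=\phi(n/d)$, the harmlessness of excluding the divisor $n$, and the fact that the valency alone suffices here) are accurate glosses on the same argument.
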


\begin{proof}
By Lemma \ref{lambdan}, $\lambda_{n}(S_{1})=\lambda_{n}(S_{2})$. By (\ref{ramanujan}) and (\ref{lambdaphimu}),
$$
\sum_{d\in\mathcal{D}_{[n]}}\chi_{\mathcal{D}_{S_{1}}}(d)\phi(n/d)
=\sum_{d\in\mathcal{D}_{[n]}}\chi_{\mathcal{D}_{S_{2}}}(d)\phi(n/d).
$$
By Lemma \ref{perphiodd}, $\mathcal{D}_{S_{1}}=\mathcal{D}_{S_{2}}$.\qed
\end{proof}

\begin{theorem}\emph{(Theorem \ref{main} (b))}\label{mainb}
Let $n\geq1$ be an integer with prime factorisation $n=2p_{1}^{J_{1}}p_{2}^{J_{2}}\cdots p_{s}^{J_{s}}$ where $s\geq2$ and $\forall r\in[s-1]$, $\prod_{i=1}^{r}p_{i}^{J_{i}}<p_{r+1}$. Let $\mathcal{D}_{S_{1}}$ and $\mathcal{D}_{S_{2}}$ be two subsets of $\mathcal{D}_{[n]}\setminus\{n\}$. Then $\spec(\ICG(n,\mathcal{D}_{S_{1}}))=\spec(\ICG(n,\mathcal{D}_{S_{2}}))$ implies $\mathcal{D}_{S_{1}}=\mathcal{D}_{S_{2}}$.

\end{theorem}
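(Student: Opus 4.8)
The plan is to run the even case in exact parallel with the odd case (Theorem \ref{maina}), but to replace the single spectral constraint used there by a \emph{pair} of constraints. In the odd case it sufficed to extract one linear relation, namely $\lambda_n(S_1)=\lambda_n(S_2)$, because $\lambda_n(S)=\sum_{d\in\mathcal{D}_S}\phi(n/d)$ and the super-sequence machinery of Lemma \ref{perphiodd} could then disentangle the contribution of every divisor at once. For even $n$ this is no longer enough: the value $\lambda_n(S)$ blends together the contributions of the odd divisors and the even divisors, and a single linear relation cannot separate these two families. The key point is that, because $2\mid n$, a second independent invariant is available, namely the eigenvalue $\lambda_{n/2}$ furnished by Lemma \ref{lambdan/2}.

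First I would use isospectrality together with $2\mid n$ to invoke Corollary \ref{n/2cor}. This yields at once the two identities
$$\sum_{d\in\mathcal{D}_{[n]}\cap\mathcal{O}}\chi_{\mathcal{D}_{S_1}}(d)\phi(n/d)=\sum_{d\in\mathcal{D}_{[n]}\cap\mathcal{O}}\chi_{\mathcal{D}_{S_2}}(d)\phi(n/d)$$
and
$$\sum_{d\in\mathcal{D}_{[n]}\cap\mathcal{E}}\chi_{\mathcal{D}_{S_1}}(d)\phi(n/d)=\sum_{d\in\mathcal{D}_{[n]}\cap\mathcal{E}}\chi_{\mathcal{D}_{S_2}}(d)\phi(n/d),$$
which are precisely hypotheses (1) and (2) of Lemma \ref{perphieven}. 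Conceptually, writing the odd-divisor sum as $O$ and the even-divisor sum as $E$, one has $\lambda_n(S)=O+E$ and $\lambda_{n/2}(S)=-O+E$, so knowing both eigenvalues amounts to solving an invertible $2\times2$ system and thereby decouples $O$ and $E$ across $S_1$ and $S_2$. Second, since $n=2p_1^{J_1}\cdots p_s^{J_s}$ satisfies exactly the arithmetic hypothesis $\prod_{i=1}^{r}p_i^{J_i}<p_{r+1}$ demanded by Lemma \ref{perphieven}, I would apply that lemma directly to conclude $\mathcal{D}_{S_1}=\mathcal{D}_{S_2}$, which is the assertion.

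The genuine difficulty of the even case has therefore been absorbed into Corollary \ref{n/2cor} and Lemma \ref{perphieven}; once those are in hand, the theorem itself is immediate. Thus the main obstacle is conceptual rather than computational: one must recognise that a second spectral invariant (the value at $n/2$) is exactly what is needed, and one must check that the super-sequence hypothesis survives the passage from $n$ to $n/2$. This last point is what Lemma \ref{perphieven} handles internally, by reducing each of the two identities to a comparison over the odd modulus $n/2=p_1^{J_1}\cdots p_s^{J_s}$ through the maps $d\mapsto d$ on odd divisors and $d\mapsto d/2$ on even divisors, and then applying Lemma \ref{perphiodd}; this works because $2$ is the smallest prime and stripping it off leaves the odd-prime ordering and the chain condition $\prod_{i=1}^{r}p_i^{J_i}<p_{r+1}$ intact.
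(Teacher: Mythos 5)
Your proof is correct and matches the paper's own argument exactly: both invoke Corollary \ref{n/2cor} to obtain the two decoupled identities over odd and even divisors and then apply Lemma \ref{perphieven} to conclude $\mathcal{D}_{S_{1}}=\mathcal{D}_{S_{2}}$. Your added commentary on the $2\times2$ system $\lambda_n = O+E$, $\lambda_{n/2}=-O+E$ and on how Lemma \ref{perphieven} reduces to the odd modulus $n/2$ accurately describes the mechanics already contained in those auxiliary results.
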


\begin{proof}
By Corollary \ref{n/2cor}, we have
$$
\sum_{d\in\mathcal{D}_{[n]}\cap\mathcal{O}}\chi_{\mathcal{D}_{S_{1}}}(d)\phi(n/d)
=\sum_{d\in\mathcal{D}_{[n]}\cap\mathcal{O}}\chi_{\mathcal{D}_{S_{2}}}(d)\phi(n/d)
$$
and
$$
\sum_{d\in\mathcal{D}_{[n]}\cap\mathcal{E}}\chi_{\mathcal{D}_{S_{1}}}(d)\phi(n/d)
=\sum_{d\in\mathcal{D}_{[n]}\cap\mathcal{E}}\chi_{\mathcal{D}_{S_{2}}}(d)\phi(n/d).
$$
By Lemma \ref{perphieven}, $\mathcal{D}_{S_{1}}=\mathcal{D}_{S_{2}}$.\qed
\end{proof}

\subsection{(c) and (d) of Theorem \ref{main}}\label{sub2}
In this subsection, we give proofs of (c) and (d) of Theorem \ref{main}.

\subsubsection{Useful notations and lemmas}
The notations and lemmas introduced here are important in the proofs of (c) and (d) of Theorem \ref{main}.
\begin{lemma}
\label{n/2notintriangle}
Let $n\geq1$ be an integer. Let $\mathcal{D}_{S_{1}}$ and $\mathcal{D}_{S_{2}}$ be two subsets of $\mathcal{D}_{[n]}\setminus\{n\}$. If $\spec(\ICG(n,\mathcal{D}_{S_{1}}))=\spec(\ICG(n,\mathcal{D}_{S_{2}}))$, then $n/2\notin\mathcal{D}_{S_{1}}\triangle\mathcal{D}_{S_{2}}$, where $\triangle$ denotes the \emph{symmetric difference}.
\end{lemma}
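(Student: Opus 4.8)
The plan is to reduce the whole statement to a single congruence modulo $2$ coming from the eigenvalue $\lambda_{n}$. First I would dispose of the trivial case: if $n$ is odd then $n/2$ is not an integer and hence not a divisor of $n$, so it lies in neither $\mathcal{D}_{S_{1}}$ nor $\mathcal{D}_{S_{2}}$, and therefore certainly not in their symmetric difference. So I may assume $2\mid n$, in which case $n/2\in\mathcal{D}_{[n]}$, and the goal becomes showing $\chi_{\mathcal{D}_{S_{1}}}(n/2)=\chi_{\mathcal{D}_{S_{2}}}(n/2)$, that is, that $n/2$ belongs either to both or to neither of the two divisor sets.

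The main computation is a parity analysis of $\lambda_{n}$. By Lemma \ref{lambdan}, isospectrality gives $\lambda_{n}(S_{1})=\lambda_{n}(S_{2})$. By (\ref{ramanujan}) and (\ref{lambdaphimu}), since $\gcd(n,n/d)=n/d$ forces the totient/M\"obius factor to be $\phi(1)\mu(1)=1$, we have $\lambda_{n}(S_{i})=\sum_{d\in\mathcal{D}_{S_{i}}}\phi(n/d)$ for $i\in\{1,2\}$. Now reduce modulo $2$: the quantity $\phi(n/d)$ is odd precisely when $n/d\in\{1,2\}$, that is, when $d\in\{n,n/2\}$, because $2\mid\phi(k)$ for every $k\geq3$ while $\phi(1)=\phi(2)=1$. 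Since $\mathcal{D}_{S_{i}}\subseteq\mathcal{D}_{[n]}\setminus\{n\}$, the divisor $d=n$ never occurs, so the only term of odd parity that can appear is the one with $d=n/2$. Hence $\lambda_{n}(S_{i})\equiv\chi_{\mathcal{D}_{S_{i}}}(n/2)\pmod 2$.

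Putting these together, $\lambda_{n}(S_{1})=\lambda_{n}(S_{2})$ yields $\chi_{\mathcal{D}_{S_{1}}}(n/2)\equiv\chi_{\mathcal{D}_{S_{2}}}(n/2)\pmod 2$; as both characteristic values lie in $\{0,1\}$, they must in fact be equal. This is exactly the statement that $n/2\in\mathcal{D}_{S_{1}}$ if and only if $n/2\in\mathcal{D}_{S_{2}}$, which is the desired conclusion $n/2\notin\mathcal{D}_{S_{1}}\triangle\mathcal{D}_{S_{2}}$.

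I do not expect a genuine obstacle here; the only points requiring care are the bookkeeping of exactly which divisors make $\phi(n/d)$ odd (namely only $d=n$ and $d=n/2$) and the crucial use of the hypothesis $\mathcal{D}_{S_{i}}\subseteq\mathcal{D}_{[n]}\setminus\{n\}$ to discard the $d=n$ contribution, without which $\lambda_{n}$ would also pick up the odd term $\phi(1)=1$ and the parity argument would fail. One could alternatively run the same parity computation through $\lambda_{n/2}$ using Lemma \ref{lambdan/2}, but the $\lambda_{n}$ route is the shortest.
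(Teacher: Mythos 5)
Your proof is correct and takes essentially the same approach as the paper's: both reduce the statement to the parity of $\lambda_{n}(S_{i})=\sum_{d\in\mathcal{D}_{S_{i}}}\phi(n/d)$ via Lemma \ref{lambdan}, using that $2\mid\phi(k)$ for all $k\geq3$ while $\phi(2)=1$, and crucially discarding $d=n$ by the hypothesis $\mathcal{D}_{S_{i}}\subseteq\mathcal{D}_{[n]}\setminus\{n\}$. The only cosmetic difference is framing: the paper argues by contradiction, comparing the parities of $\sum_{d\in\mathcal{D}_{S_{1}}\setminus\mathcal{D}_{S_{2}}}\phi(n/d)$ and $\sum_{d\in\mathcal{D}_{S_{2}}\setminus\mathcal{D}_{S_{1}}}\phi(n/d)$, whereas you compute $\lambda_{n}(S_{i})\equiv\chi_{\mathcal{D}_{S_{i}}}(n/2)\pmod{2}$ directly (and you also make explicit the vacuous odd-$n$ case, which the paper leaves implicit).
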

\begin{proof}
We give our proof by contradiction. Assume that $n/2\in\mathcal{D}_{S_{1}}\triangle\mathcal{D}_{S_{2}}$, without loss of generality, we set $n/2\in\mathcal{D}_{S_{1}}\setminus\mathcal{D}_{S_{2}}$. Since $\forall d\in\mathcal{D}_{[n]}\setminus\{n,n/2\}$, $2|\phi(n/d)$, we have
\begin{align*}
\sum_{d\in\mathcal{D}_{S_{1}}\setminus\mathcal{D}_{S_{2}}}\phi(n/d)
&=\phi(2)+\sum_{d\in\mathcal{D}_{S_{1}}\setminus\mathcal{D}_{S_{2}}\setminus\{n/2\}}\phi(n/d)
\equiv1~(\mathrm{mod}\; 2)
\end{align*}
and
\begin{align*}
\sum_{d\in\mathcal{D}_{S_{2}}\setminus\mathcal{D}_{S_{1}}}\phi(n/d)
&\equiv0~(\mathrm{mod}\; 2).
\end{align*}
By Lemma \ref{lambdan}, $\lambda_{n}(S_{1})=\lambda_{n}(S_{2})$. By (\ref{ramanujan}) and (\ref{lambdaphimu}), $\sum_{d\in\mathcal{D}_{S_{1}}}\phi(n/d)=\sum_{d\in\mathcal{D}_{S_{2}}}\phi(n/d)$ and so $\sum_{d\in\mathcal{D}_{S_{1}}\setminus\mathcal{D}_{S_{2}}}\phi(n/d)
=\sum_{d\in\mathcal{D}_{S_{2}}\setminus\mathcal{D}_{S_{1}}}\phi(n/d)$, leading to
$$
\sum_{d\in\mathcal{D}_{S_{1}}\setminus\mathcal{D}_{S_{2}}}\phi(n/d)
\equiv\sum_{d\in\mathcal{D}_{S_{2}}\setminus\mathcal{D}_{S_{1}}}\phi(n/d)~(\mathrm{mod}\; 2),
$$
which is a contradiction.\qed

\end{proof}
\begin{lemma}
\label{n/pnotintriangle}\emph{(See\cite[Theorem~3.3.10]{n/p})}
Let $n\geq1$ be an integer. Let $\mathcal{D}_{S_{1}}$ and $\mathcal{D}_{S_{2}}$ be two subsets of $\mathcal{D}_{[n]}\setminus\{n\}$. If $\spec(\ICG(n,\mathcal{D}_{S_{1}}))=\spec(\ICG(n,\mathcal{D}_{S_{2}}))$, then for any odd prime divisor $p$ of $n$, $n/p\notin\mathcal{D}_{S_{1}}\triangle\mathcal{D}_{S_{2}}$.
\end{lemma}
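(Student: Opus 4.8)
The plan is to imitate the parity argument of Lemma~\ref{n/2notintriangle}, in which the single relation $\lambda_n(S_1)=\lambda_n(S_2)$ (equivalently $\sum_{d\in\mathcal{D}_{S_1}}\phi(n/d)=\sum_{d\in\mathcal{D}_{S_2}}\phi(n/d)$) detects $n/2$ because $\phi(n/d)$ is odd exactly when $d=n/2$. For an odd prime $p$ this mechanism fails: $\phi(p)=p-1$ is even, so the degree relation read modulo $2$ cannot see $n/p$, and in fact no single residue of $\phi(n/d)$ singles out $d=n/p$ among the divisors of $n$. I would therefore bring in a second spectral constraint, the eigenvalue at $k=n/p$, and use it together with $\lambda_n$.

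Assume for contradiction that, say, $n/p\in\mathcal{D}_{S_1}\setminus\mathcal{D}_{S_2}$. Using (\ref{ramanujan}) and (\ref{lambdaphimu}) I would compute $\lambda_{n/p}(S)$ by splitting $\mathcal{D}_S$ according to whether $p\mid d$. For $p\mid d$ one has $\gcd(n/p,n/d)=n/d$, so $(n/d)/\gcd=1$ and the term contributes $\phi(n/d)$; for $p\nmid d$ one gets $(n/d)/\gcd=p$, and since $\mu(p)=-1$, $\phi(p)=p-1$, the term contributes $-\phi(n/d)/(p-1)$. Writing $A(S)=\sum_{d\in\mathcal{D}_S,\,p\mid d}\phi(n/d)$ and $B(S)=\sum_{d\in\mathcal{D}_S,\,p\nmid d}\phi(n/d)$, this gives $\lambda_n(S)=A(S)+B(S)$ and $\lambda_{n/p}(S)=A(S)-B(S)/(p-1)$. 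Once the two equalities $\lambda_n(S_1)=\lambda_n(S_2)$ and $\lambda_{n/p}(S_1)=\lambda_{n/p}(S_2)$ are available, this $2\times2$ linear system (its determinant is $p/(p-1)\neq0$) forces $A(S_1)=A(S_2)$ and $B(S_1)=B(S_2)$ separately.

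It then remains to turn one split equality into a parity contradiction isolating $n/p$. When $p\parallel n$, the divisor $n/p$ lies in the $p\nmid d$ part, and every term of $B(S)$ is divisible by $p-1$ because $p\parallel(n/d)$; dividing out yields $\sum_{d\in\mathcal{D}_{S_i},\,p\nmid d}\phi(n/(dp))$, whose term $d=n/p$ equals $\phi(1)=1$, while every other term $\phi(n/(dp))$ is even unless $n/(dp)=2$, i.e.\ unless $d=n/(2p)$. For $n$ odd this exceptional divisor does not occur, so restricting to the symmetric difference makes the $S_1$-sum odd and the $S_2$-sum even, contradicting $B(S_1)=B(S_2)$.

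The principal obstacle is the step I passed over, namely establishing $\lambda_{n/p}(S_1)=\lambda_{n/p}(S_2)$: unlike the $n/2$ case (Lemma~\ref{lambdan/2}), there is no modulus and no removable set $T$ for which the multiplicities $|\mathcal{L}_{S}(\nu)\setminus T|$ single out the $\gcd$-class of $n/p$, since $\phi(n/d)$ can be nonzero modulo $p$ (and modulo $2$) for divisors other than $n/p$. I would attack this through the multiplicative (tensor) structure of the eigenvalues, each $\mathcal{R}_{n/d}(k)$ factoring over the prime powers of $n$, which localises the required equality at the single prime $p$ and meshes with the super-sequence machinery of Lemma~\ref{supersequence2}. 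The same difficulties reappear in the two remaining cases I deferred, the stray divisor $n/(2p)$ when $n$ is even and the higher-power situation $p^{a}\mid n$ with $a\ge2$ (where $n/p$ moves into the $A(S)$ part and $\phi(p)$ is again even, so the crude mod-$2$ test fails); I expect these to be controlled by induction on $v_p(n)$ using the auxiliary eigenvalues $\lambda_{n/p^{i}}$. This delicate bookkeeping is precisely where the argument of the cited source does its real work.
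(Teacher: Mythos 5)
First, a point of comparison: the paper does not prove this lemma at all --- it is imported verbatim from M\"onius's thesis \cite[Theorem~3.3.10]{n/p} --- so your proposal must be judged on its own, and on its own it is incomplete. The bookkeeping you do carry out is correct: for $x=n/d$ and $y=n/p$ one has $(n/d)/\gcd(n/p,n/d)=1$ when $p\mid d$ and $=p$ when $p\nmid d$, so by (\ref{ramanujan}) and (\ref{lambdaphimu}) indeed $\lambda_{n}(S)=A(S)+B(S)$ and $\lambda_{n/p}(S)=A(S)-B(S)/(p-1)$, the $2\times2$ system is nonsingular, and your parity count after dividing $B$ by $p-1$ correctly excludes $n/p\in\mathcal{D}_{S_{1}}\triangle\mathcal{D}_{S_{2}}$ when $n$ is odd and $p$ exactly divides $n$.

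The genuine gap is exactly the step you flag and then pass over: nothing establishes $\lambda_{n/p}(S_{1})=\lambda_{n/p}(S_{2})$. Isospectrality gives only equality of the multiset of eigenvalues; extracting equality at the specific index $n/p$ is the entire content of the lemma, and none of the paper's tools supplies it in this generality. Lemma~\ref{lambdan/2} works because $\phi(2)=1$ is the unique odd value of $\phi$ among $\phi(k)$, $k\geq2$, yielding a mod-$2$ invariant on the multiplicities $|\mathcal{L}_{S}(\nu_{j})\setminus\{n\}|$; for an odd prime $p$ there is no analogous universal congruence isolating the class $G_{n}(n/p)$, since $\phi(n/d)\not\equiv0\pmod{p-1}$ for many divisors $d\neq n/p$ (the mod-$(q-1)$ multiplicity count in the proof of Theorem~\ref{22q2} succeeds only because of the special shape $n=2^{2}q^{2}$, where every other relevant $\phi(n/d)$ is divisible by $q-1$). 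Lemma~\ref{mcontradiction} cannot be invoked either, as its hypothesis (2) is a cardinality condition that fails for general $n$. Your closing gesture toward the tensor structure of Ramanujan sums together with Lemma~\ref{supersequence2} does not rescue this: that lemma requires the super-sequence hypotheses, i.e.\ exactly the growth condition $\prod_{i\leq r}p_{i}^{J_{i}}<p_{r+1}$ that restricts Theorems~\ref{maina} and \ref{mainb}, and it is unavailable here. The two cases you defer ($2p\mid n$, where the stray term $\phi(2)=1$ breaks the parity count, and $p^{2}\mid n$, where $n/p$ moves into $A(S)$ and $\phi(p)=p-1$ is even) founder on the same missing ingredient, via equally unproven equalities $\lambda_{n/p^{i}}(S_{1})=\lambda_{n/p^{i}}(S_{2})$. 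So what you have is a correct conditional fragment: the pivotal step, where the cited thesis does its real work, is absent, and it is essentially as strong as the lemma itself.
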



\begin{lemma}\label{notincontradiction}
Let $A$ be a finite nonempty set. Let $f$ be a mapping $f:A\rightarrow\{0\}\cup\mathbb{R}_{+}$ with a nonempty subset $B\subseteq A$ such that
\begin{equation}\label{notincontradictioneq1}
\sum_{a\in B}f(a)>\sum_{a\in A\setminus B}f(a).
\end{equation}
Let $A_{1}$ and $A_{2}$ be two subsets of $A$ such that $A_{1}\cap A_{2}=\emptyset$ and
\begin{equation}\label{notincontradictioneq2}
\sum_{a\in A_{1}}f(a)=\sum_{a\in A_{2}}f(a).
\end{equation}
Then we have $B\nsubseteq A_{1}$ and $B\nsubseteq A_{2}$. In particular, if $B=\{b\}$ has only one element, then $b\notin A_{1}\cup A_{2}$.



\end{lemma}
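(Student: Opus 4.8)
The plan is to argue by contradiction, exploiting only the nonnegativity of $f$ and the disjointness of $A_1$ and $A_2$. Suppose, toward a contradiction and without loss of generality by the symmetry between $A_1$ and $A_2$ in the hypotheses, that $B\subseteq A_1$. The aim is to show that this inclusion alone forces $\sum_{a\in A_1}f(a)>\sum_{a\in A_2}f(a)$, contradicting (\ref{notincontradictioneq2}).

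First I would bound the left-hand side from below: since $f$ takes values in $\{0\}\cup\mathbb{R}_{+}$, every term is nonnegative, so the assumption $B\subseteq A_1$ gives $\sum_{a\in A_1}f(a)\geq\sum_{a\in B}f(a)$. Next I would bound the right-hand side from above. The crucial observation is that $A_1\cap A_2=\emptyset$ together with $B\subseteq A_1$ yields $A_2\cap B=\emptyset$, that is $A_2\subseteq A\setminus B$; again by nonnegativity of $f$ this gives $\sum_{a\in A_2}f(a)\leq\sum_{a\in A\setminus B}f(a)$.

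Chaining these two bounds through the strict inequality (\ref{notincontradictioneq1}) produces
\begin{align*}
\sum_{a\in A_1}f(a)\geq\sum_{a\in B}f(a)>\sum_{a\in A\setminus B}f(a)\geq\sum_{a\in A_2}f(a),
\end{align*}
which contradicts (\ref{notincontradictioneq2}). Hence $B\nsubseteq A_1$, and interchanging the roles of $A_1$ and $A_2$ gives $B\nsubseteq A_2$. For the final assertion, when $B=\{b\}$ the inclusion $B\subseteq A_i$ is equivalent to $b\in A_i$, so $B\nsubseteq A_1$ and $B\nsubseteq A_2$ translate directly into $b\notin A_1$ and $b\notin A_2$, i.e. $b\notin A_1\cup A_2$.

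The argument is short, and the only point requiring genuine care — the nearest thing to an obstacle — is the middle step, where the disjointness $A_1\cap A_2=\emptyset$ is converted into the inclusion $A_2\subseteq A\setminus B$. This is precisely what allows the hypothesis (\ref{notincontradictioneq1}), which compares $B$ against its complement $A\setminus B$, to interact with the disjoint pair $(A_1,A_2)$; everything else is just monotonicity of a sum of nonnegative terms.
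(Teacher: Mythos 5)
Your proof is correct and follows essentially the same route as the paper: assume $B\subseteq A_{1}$ without loss of generality, convert disjointness into $A_{2}\subseteq A\setminus B$, and chain $\sum_{a\in A_{1}}f(a)\geq\sum_{a\in B}f(a)>\sum_{a\in A\setminus B}f(a)\geq\sum_{a\in A_{2}}f(a)$ to contradict the equality hypothesis. The paper's argument is the same chain presented as a single displayed computation, so there is no substantive difference.
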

\begin{proof}
We give our proof by contradiction. Without loss of generality, assuming that $B\subseteq A_{1}$, we have
\begin{equation}\label{notincontradictioneq3}
A_{2}\subseteq A\setminus B.
\end{equation}
Then
\begin{align*}
\sum_{a\in A_{2}}f(a)&=\sum_{a\in A_{1}}f(a)\tag{by (\ref{notincontradictioneq2})}
\\&=\sum_{a\in B}f(a)+\sum_{a\in A_{1}\setminus B}f(a)
\\&\geq\sum_{a\in B}f(a)
\\&>\sum_{a\in A\setminus B}f(a)\tag{by (\ref{notincontradictioneq1})}
\\&\geq\sum_{a\in A_{2}}f(a),\tag{by (\ref{notincontradictioneq3})}
\end{align*}
which is a contradiction.\qed
\end{proof}

\begin{lemma}\label{notincontradictionnew}
Let $A=\{a_{1}\}$ be a set having only one element. Let $f:A\rightarrow\mathbb{R}_{+}$ be a mapping. Let $A_{1}$ and $A_{2}$ be two subsets of $A$ such that $A_{1}\cap A_{2}=\emptyset$ and
\begin{equation}\label{notincontradictionneweq1}
\sum_{a\in A_{1}}f(a)=\sum_{a\in A_{2}}f(a).
\end{equation}
Then $A_{1}\cup A_{2}=\emptyset$.
\end{lemma}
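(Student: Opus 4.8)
The plan is to recognize this statement as the single-element special case of Lemma \ref{notincontradiction}. First I would apply that lemma with the choice $B=A=\{a_{1}\}$. Since $B=A$, the complement $A\setminus B$ is empty, so the right-hand side of (\ref{notincontradictioneq1}) is the empty sum and equals $0$. The left-hand side is $\sum_{a\in B}f(a)=f(a_{1})$, and because $f$ takes values in $\mathbb{R}_{+}$, i.e.\ strictly positive reals, we have $f(a_{1})>0$. Hence $\sum_{a\in B}f(a)=f(a_{1})>0=\sum_{a\in A\setminus B}f(a)$, so the hypothesis (\ref{notincontradictioneq1}) of Lemma \ref{notincontradiction} holds for this $B$.

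The remaining hypotheses $A_{1}\cap A_{2}=\emptyset$ and (\ref{notincontradictionneweq1}) are precisely the conditions $A_{1}\cap A_{2}=\emptyset$ and (\ref{notincontradictioneq2}) demanded by Lemma \ref{notincontradiction}. Since $B=\{a_{1}\}$ is a singleton, the ``in particular'' clause of Lemma \ref{notincontradiction} gives $a_{1}\notin A_{1}\cup A_{2}$. As $A_{1},A_{2}\subseteq A=\{a_{1}\}$, this forces $A_{1}=A_{2}=\emptyset$, and therefore $A_{1}\cup A_{2}=\emptyset$, as required.

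A self-contained alternative avoids invoking the earlier lemma altogether: the only subsets of $A=\{a_{1}\}$ are $\emptyset$ and $\{a_{1}\}$, and the disjointness $A_{1}\cap A_{2}=\emptyset$ prevents both from being $\{a_{1}\}$. If exactly one of them were $\{a_{1}\}$, then (\ref{notincontradictionneweq1}) would read $f(a_{1})=0$, contradicting $f(a_{1})\in\mathbb{R}_{+}$; hence both are empty. Either route is entirely routine, and I anticipate no genuine obstacle: the whole content is that a strictly positive value cannot be balanced against an empty sum.
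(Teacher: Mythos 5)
Your proposal is correct, and your second, self-contained argument is essentially the paper's own proof: the paper assumes $A_{1}\cup A_{2}\neq\emptyset$, reduces (without loss of generality) to $A_{1}=\{a_{1}\}$ and $A_{2}=\emptyset$ via disjointness, and contradicts (\ref{notincontradictionneweq1}) by the strict positivity $f(a_{1})>0$ against the empty sum. Your primary route is genuinely different: you instead specialize Lemma \ref{notincontradiction} with $B=A=\{a_{1}\}$, checking that its hypothesis (\ref{notincontradictioneq1}) holds because $\sum_{a\in B}f(a)=f(a_{1})>0=\sum_{a\in A\setminus B}f(a)$, and then invoke its ``in particular'' clause to get $a_{1}\notin A_{1}\cup A_{2}$; this is a valid instantiation (note that $f:A\rightarrow\mathbb{R}_{+}$ is an admissible codomain since $\mathbb{R}_{+}\subseteq\{0\}\cup\mathbb{R}_{+}$, and $B$ is nonempty as required). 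What the lemma-based route buys is economy and a conceptual point --- it exhibits Lemma \ref{notincontradictionnew} as the degenerate one-element case of the general separation lemma rather than as an independent fact --- while the direct route the paper chose is marginally more elementary, avoiding the (mild) bookkeeping of verifying the hypotheses of Lemma \ref{notincontradiction}. Either version would serve the paper's purposes equally well.
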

\begin{proof}
We give our proof by contradiction. Assume that $A_{1}\cup A_{2}\neq\emptyset$. Then $A_{1}\cup A_{2}=\{a_{1}\}$. Without loss of generality, suppose that $A_{1}=\{a_{1}\}$. Since $A_{1}\cap A_{2}=\emptyset$, $A_{2}=\emptyset$. Then $\sum_{a\in A_{1}}f(a)=f(a_{1})>0=\sum_{a\in A_{2}}f(a)$, contradicting (\ref{notincontradictionneweq1}).\qed

\end{proof}

\begin{cor}\label{notincontradictioncor}
Let $A=\{a_{1},a_{2}\}$ be a set having only two elements. Let $f$ be a mapping $f:A\rightarrow\mathbb{R}_{+}$. Let $A_{1}$ and $A_{2}$ be two subsets of $A$ such that $A_{1}\cap A_{2}=\emptyset$ and
$$
\sum_{a\in A_{1}}f(a)=\sum_{a\in A_{2}}f(a).
$$
Then either
\begin{itemize}
\item[\rm{(a)}]$f(a_{1})=f(a_{2})$ and $|A_{1}|=|A_{2}|=1$; or
\item[\rm{(b)}]$A_{1}\cup A_{2}=\emptyset$.
\end{itemize}
\end{cor}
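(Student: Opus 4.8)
The plan is to reduce the whole statement to the two preceding lemmas by a short case analysis according to whether $f(a_1)$ and $f(a_2)$ coincide. First I would suppose $f(a_1)\neq f(a_2)$; by the symmetry of the roles of $a_1$ and $a_2$, assume $f(a_1)>f(a_2)$. Taking $B=\{a_1\}$ as a subset of $A$, the inequality $f(a_1)>f(a_2)$ is precisely $\sum_{a\in B}f(a)>\sum_{a\in A\setminus B}f(a)$, i.e.\ hypothesis (\ref{notincontradictioneq1}) of Lemma \ref{notincontradiction}. Since $B$ is a singleton, the ``in particular'' clause of that lemma gives $a_1\notin A_1\cup A_2$. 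Hence $A_1$ and $A_2$ are both subsets of the one-element set $\{a_2\}$, and Lemma \ref{notincontradictionnew} (applied with ground set $\{a_2\}$) forces $A_1\cup A_2=\emptyset$, which is conclusion (b).

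It then remains to treat the case $f(a_1)=f(a_2)$. If $A_1\cup A_2=\emptyset$ then (b) holds, so I would assume $A_1\cup A_2\neq\emptyset$ and aim for (a). First I would exclude $|A_1\cup A_2|=1$: if $A_1\cup A_2$ were a singleton $\{a_i\}$, then $A_1,A_2\subseteq\{a_i\}$ and Lemma \ref{notincontradictionnew} would again give $A_1\cup A_2=\emptyset$, a contradiction. Therefore $A_1\cup A_2=A$, and since $A_1\cap A_2=\emptyset$ the pair $(A_1,A_2)$ partitions $A$. The partitions $(A,\emptyset)$ and $(\emptyset,A)$ are impossible, as they would equate a strictly positive sum with $0$; the only remaining partitions are $(\{a_1\},\{a_2\})$ and $(\{a_2\},\{a_1\})$, each of which has $|A_1|=|A_2|=1$ and, together with $f(a_1)=f(a_2)$, yields conclusion (a).

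I do not expect any substantial obstacle: the corollary is a finite check, and essentially all of the content sits in Lemmas \ref{notincontradiction} and \ref{notincontradictionnew}. The only mildly delicate point is the equal-value branch, where both outcomes (a) and (b) genuinely occur, so one must enumerate the admissible disjoint pairs $(A_1,A_2)$ rather than hope for a single uniform conclusion. The real work is therefore just to present the hypotheses of the two lemmas in the correct form for the one-element ground sets that arise once the locations of $a_1$ and $a_2$ relative to $A_1\cup A_2$ have been pinned down.
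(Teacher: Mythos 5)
Your proof is correct and takes essentially the same approach as the paper's: both arguments rest entirely on Lemma \ref{notincontradiction} applied with the singleton $B=\{a_1\}$, Lemma \ref{notincontradictionnew} applied to singleton ground sets, and the positivity observation ruling out the partitions $(A,\emptyset)$ and $(\emptyset,A)$. The only difference is organizational — you case-split on whether $f(a_1)=f(a_2)$ whereas the paper case-splits on $|A_1\cup A_2|$ — and the same lemma applications occur in both arrangements.
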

\begin{proof}
We first rule out the case where $|A_{1}\cup A_{2}|=1$ by contradiction. Assume that $|A_{1}\cup A_{2}|=1$. Taking $A_{1}\cup A_{2}$, $f$, $A_{1}$, $A_{2}$, as $A$, $f$, $A_{1}$, $A_{2}$, in Lemma \ref{notincontradictionnew}, we have $A_{1}\cup A_{2}=\emptyset$, contradicting $|A_{1}\cup A_{2}|=1$.

Now, we have either $A_{1}\cup A_{2}=\{a_{1},a_{2}\}$ or $A_{1}\cup A_{2}=\emptyset$. It remains to prove that when $A_{1}\cup A_{2}=\{a_{1},a_{2}\}$, we have $f(a_{1})=f(a_{2})$ and $|A_{1}|=|A_{2}|=1$.

Suppose $A_{1}\cup A_{2}=\{a_{1},a_{2}\}$. We first prove that $f(a_{1})=f(a_{2})$. We give our proof by contradiction. Assume that $f(a_{1})\neq f(a_{2})$. Without loss of generality, suppose that $f(a_{1})>f(a_{2})$. Taking $A$, $f$, $\{a_{1}\}$, $A_{1}$, $A_{2}$, as $A$, $f$, $B$, $A_{1}$, $A_{2}$, in Lemma \ref{notincontradiction}, we have $a_{1}\notin A_{1}\cup A_{2}$, which is a contradiction.

We now prove that $|A_{1}|=|A_{2}|=1$. We give our proof by contradiction. Without loss of generality, assume that $|A_{1}|=2$. Then $A_{1}=\{a_{1},a_{2}\}$ and $A_{2}=\emptyset$. And so
$$
\sum_{a\in A_{1}}f(a)=f(a_{1})+f(a_{2})>0=\sum_{a\in A_{2}}f(a),
$$
contradicting (\ref{notincontradictioneq2}).\qed


\end{proof}



\begin{lemma}
\label{1notintriangle}
Let $n\geq1$ be an integer such that $\phi(n)\geq n/2$. Let $\mathcal{D}_{S_{1}}$ and $\mathcal{D}_{S_{2}}$ be two subsets of $\mathcal{D}_{[n]}\setminus\{n\}$. If $\spec(\ICG(n,\mathcal{D}_{S_{1}}))=\spec(\ICG(n,\mathcal{D}_{S_{2}}))$, 
then $1\notin\mathcal{D}_{S_{1}}\triangle\mathcal{D}_{S_{2}}$.
\end{lemma}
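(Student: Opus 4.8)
The plan is to isolate the divisor $d=1$, whose block $G_{n}(1)$ has the large cardinality $\phi(n)$, as a dominant term and then invoke Lemma \ref{notincontradiction}. First, from the isospectrality hypothesis together with Lemma \ref{lambdan}, we have $\lambda_{n}(S_{1})=\lambda_{n}(S_{2})$. Evaluating the eigenvalue formula (\ref{ramanujan})--(\ref{lambdaphimu}) at $k=n$, where each Ramanujan sum collapses to $\phi(n/d)$, this becomes $\sum_{d\in\mathcal{D}_{S_{1}}}\phi(n/d)=\sum_{d\in\mathcal{D}_{S_{2}}}\phi(n/d)$. Cancelling the divisors common to $\mathcal{D}_{S_{1}}$ and $\mathcal{D}_{S_{2}}$, this is equivalent to $\sum_{d\in\mathcal{D}_{S_{1}}\setminus\mathcal{D}_{S_{2}}}\phi(n/d)=\sum_{d\in\mathcal{D}_{S_{2}}\setminus\mathcal{D}_{S_{1}}}\phi(n/d)$.

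Next I would feed this into Lemma \ref{notincontradiction}. Take $A=\mathcal{D}_{[n]}\setminus\{n\}$, so that $A_{1}:=\mathcal{D}_{S_{1}}\setminus\mathcal{D}_{S_{2}}$ and $A_{2}:=\mathcal{D}_{S_{2}}\setminus\mathcal{D}_{S_{1}}$ are disjoint subsets of $A$ (here the hypothesis $\mathcal{D}_{S_{i}}\subseteq\mathcal{D}_{[n]}\setminus\{n\}$ guarantees $n\notin A_{1}\cup A_{2}$). Let $f(d)=\phi(n/d)$, which is strictly positive on $A$, and set $B=\{1\}$. The displayed identity above is precisely condition (\ref{notincontradictioneq2}), so it only remains to check the domination condition (\ref{notincontradictioneq1}).

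The crux is this inequality, and it is exactly where both the hypothesis $\phi(n)\geq n/2$ and the restriction to $A=\mathcal{D}_{[n]}\setminus\{n\}$ are used. The competing sum is $\sum_{d\in A\setminus\{1\}}\phi(n/d)=\sum_{d\in\mathcal{D}_{[n]}\setminus\{n,1\}}\phi(n/d)$; by the totient identity (\ref{phiproperty}), $\sum_{d\in\mathcal{D}_{[n]}}\phi(n/d)=n$, so this equals $n-\phi(1)-\phi(n)=n-1-\phi(n)$. The required bound $f(1)=\phi(n)>n-1-\phi(n)$ then reads $2\phi(n)>n-1$, which follows at once from $\phi(n)\geq n/2$, since $2\phi(n)\geq n>n-1$. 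Lemma \ref{notincontradiction}, in its single-element case $B=\{b\}$, now yields $1\notin A_{1}\cup A_{2}=\mathcal{D}_{S_{1}}\triangle\mathcal{D}_{S_{2}}$, as required.

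I do not anticipate a serious obstacle; the whole argument is the observation that, when $\phi(n)\geq n/2$, the block of units $G_{n}(1)$ is too large to be balanced by all the remaining blocks put together, so the divisor $1$ must lie in both $\mathcal{D}_{S_{1}}$ and $\mathcal{D}_{S_{2}}$ or in neither. The one point worth a word of care is that dropping the divisor $n$ (equivalently the single term $\phi(n/n)=1$) from the competing sum is exactly what upgrades the inequality from the non-strict $2\phi(n)\geq n$ to the strict $2\phi(n)>n-1$ that Lemma \ref{notincontradiction} demands; had we kept $A=\mathcal{D}_{[n]}$, the bound would fail in the borderline case. The degenerate case $n=1$, where $\mathcal{D}_{[n]}\setminus\{n\}=\emptyset$ and both sets are empty, is vacuous.
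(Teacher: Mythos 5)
Your proof is correct and is essentially identical to the paper's: both extract $\sum_{d\in\mathcal{D}_{S_{1}}\setminus\mathcal{D}_{S_{2}}}\phi(n/d)=\sum_{d\in\mathcal{D}_{S_{2}}\setminus\mathcal{D}_{S_{1}}}\phi(n/d)$ from $\lambda_{n}(S_{1})=\lambda_{n}(S_{2})$ and then apply Lemma \ref{notincontradiction} with $A=\mathcal{D}_{[n]}\setminus\{n\}$, $f=\phi(n/\cdot)$, $B=\{1\}$, using $\phi(n)\geq n/2$ to get the strict domination $\phi(n)>n-1-\phi(n)$. Your remark that excluding $n$ from $A$ is what makes the inequality strict matches the paper's chain $\phi(n/1)\geq n-\phi(n/1)>n-1-\phi(n/1)$ exactly.
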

\begin{proof}
Since $\phi(n)\geq n/2$, we have
$$\phi(n/1)\geq n-\phi(n/1)>n-1-\phi(n/1)
=\sum_{d\in(\mathcal{D}_{[n]}\setminus\{n\})\setminus\{1\}}\phi(n/d).$$
By Lemma \ref{lambdan}, $\lambda_{n}(S_{1})=\lambda_{n}(S_{2})$. By (\ref{ramanujan}) and (\ref{lambdaphimu}), $\sum_{d\in\mathcal{D}_{S_{1}}}\phi(n/d)
=\sum_{d\in\mathcal{D}_{S_{2}}}\phi(n/d)$ and so
$$
\sum_{d\in\mathcal{D}_{S_{1}}\setminus\mathcal{D}_{S_{2}}}\phi(n/d)
=\sum_{d\in\mathcal{D}_{S_{2}}\setminus\mathcal{D}_{S_{1}}}\phi(n/d).$$
Taking $\mathcal{D}_{[n]}\setminus\{n\}$, $\phi(n/\cdot)$, $\{1\}$, $\mathcal{D}_{S_{1}}\setminus\mathcal{D}_{S_{2}}$, $\mathcal{D}_{S_{2}}\setminus\mathcal{D}_{S_{1}}$, as $A$, $f$, $B$, $A_{1}$, $A_{2}$, in Lemma \ref{notincontradiction}, we have $1\notin\mathcal{D}_{S_{1}}\triangle\mathcal{D}_{S_{2}}$.\qed

\end{proof}

Let $n\geq1$ be an integer. Let $\mathcal{D}_{S}$ be a subset of $\mathcal{D}_{[n]}$. $\overline{\mathcal{D}_{S}}$ is defined as
$$
\overline{\mathcal{D}_{S}}=\mathcal{D}_{[n]}\setminus\{n\}\setminus\mathcal{D}_{S}.
$$


\begin{lemma}
\label{regulariso}
Let $n\geq1$ be an integer. Let $\mathcal{D}_{S_{1}}$ and $\mathcal{D}_{S_{2}}$ be two subsets of $\mathcal{D}_{[n]}\setminus\{n\}$. If $\spec(\ICG(n,\mathcal{D}_{S_{1}}))=\spec(\ICG(n,\mathcal{D}_{S_{2}}))$, then $\spec(\ICG(n,\overline{\mathcal{D}_{S_{1}}}))=\spec(\ICG(n,\overline{\mathcal{D}_{S_{2}}}))$.

\end{lemma}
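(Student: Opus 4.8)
The plan is to recognise that $\ICG(n,\overline{\mathcal{D}_S})$ is nothing but the complement of $\ICG(n,\mathcal{D}_S)$, and then to reprove, through the eigenvalue formula of Lemma~\ref{eigenvaluelambda}, the classical fact that complements of cospectral regular graphs are cospectral. First I would note that, viewed as a subset of $\mathbb{Z}_n$, the connection set $\overline{S}$ attached to $\overline{\mathcal{D}_S}$ is exactly $[n-1]\setminus S$: indeed $\bigcup_{d\in\mathcal{D}_{[n]}\setminus\{n\}}G_n(d)=[n]\setminus G_n(n)=[n-1]$ since $G_n(n)=\{n\}$ is the identity, and deleting the blocks $G_n(d)$ with $d\in\mathcal{D}_S$ removes precisely $S$. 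Hence by Lemma~\ref{eigenvaluelambda}, for every $k\in[n]$,
$$
\lambda_k(\overline{S})=\sum_{g\in[n-1]}\omega_n^{kg}-\sum_{g\in S}\omega_n^{kg}
=\Bigl(\sum_{g\in[n]}\omega_n^{kg}-1\Bigr)-\lambda_k(S),
$$
where I used $\omega_n^{kn}=1$. Since $\sum_{g\in[n]}\omega_n^{kg}$ equals $n$ when $k=n$ and $0$ otherwise, this gives
$$
\lambda_k(\overline{S})=-1-\lambda_k(S)\quad(k\in[n-1]),\qquad
\lambda_n(\overline{S})=n-1-\lambda_n(S).
$$

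The one point requiring care --- and the step I expect to be the main obstacle --- is that the index $k=n$ (the degree eigenvalue $\lambda_n(S)=|S|$) transforms by the rule $x\mapsto n-1-x$ while every other eigenvalue transforms by $x\mapsto -1-x$, so one cannot simply apply a single affine map to the whole spectrum. I would deal with this by a multiset bookkeeping argument. Regard $\spec(\ICG(n,\mathcal{D}_{S_i}))$ as the multiset $\{\lambda_k(S_i):k\in[n]\}$ and separate off the term $k=n$. By Lemma~\ref{lambdan} the hypothesis yields $\lambda_n(S_1)=\lambda_n(S_2)$; call this common value $\delta$. Removing one copy of $\delta$ from the equal multisets $\spec(\ICG(n,\mathcal{D}_{S_1}))=\spec(\ICG(n,\mathcal{D}_{S_2}))$ leaves the equal multisets $\{\lambda_k(S_1):k\in[n-1]\}=\{\lambda_k(S_2):k\in[n-1]\}$, since in each case the copy removed accounts for exactly the $k=n$ index.

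Finally I would assemble the two complement spectra. By the formulas above, $\spec(\ICG(n,\overline{\mathcal{D}_{S_i}}))$ consists of the single value $n-1-\delta$ together with the multiset $\{-1-\lambda_k(S_i):k\in[n-1]\}$. The isolated value $n-1-\delta$ is identical for $i=1,2$, and applying the injective map $x\mapsto -1-x$ to the two equal multisets $\{\lambda_k(S_i):k\in[n-1]\}$ produces equal multisets. Therefore $\spec(\ICG(n,\overline{\mathcal{D}_{S_1}}))=\spec(\ICG(n,\overline{\mathcal{D}_{S_2}}))$, which is the desired conclusion.
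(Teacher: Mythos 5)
Your proof is correct, and it is more self-contained than the paper's. The paper disposes of this lemma in one sentence: since $\ICG(n,\mathcal{D}_{S_i})$ is regular, the spectrum of its complement is determined by its own spectrum --- an appeal to the classical fact that cospectral regular graphs have cospectral complements (the complement of a $k$-regular graph on $n$ vertices has eigenvalues $n-1-k$ and $-1-\lambda$ for the remaining eigenvalues $\lambda$). You instead reprove that fact from scratch in the circulant setting: you identify $\overline{S}=[n-1]\setminus S$ correctly (using $G_n(n)=\{n\}$ and the fact that the $G_n(d)$ partition $[n]$), derive $\lambda_k(\overline{S})=-1-\lambda_k(S)$ for $k\in[n-1]$ and $\lambda_n(\overline{S})=n-1-\lambda_n(S)$ directly from Lemma~\ref{eigenvaluelambda}, and then handle the exceptional index $k=n$ via Lemma~\ref{lambdan} together with a multiset-removal argument. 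What your route buys is transparency: it needs no external spectral-graph-theory input beyond the two lemmas already in the paper, and it makes explicit exactly where the hypothesis is used --- the common degree $\lambda_n(S_1)=\lambda_n(S_2)$ is what lets the two differently transformed parts of the spectrum be matched. What the paper's route buys is brevity, at the cost of leaving the regularity argument (in particular, why the degree is spectrally determined and why the exceptional eigenvalue causes no ambiguity) implicit. Both arguments are sound; yours can be read as the fully written-out version of the paper's citation.
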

\begin{proof}
Since both $\ICG(n,\mathcal{D}_{S_{1}})$ and $\ICG(n,\mathcal{D}_{S_{2}})$ are regular, $\spec(\ICG(n,\overline{\mathcal{D}_{S_{1}}}))$ and $\spec(\ICG(n,\overline{\mathcal{D}_{S_{2}}}))$ are determined by $\ICG(n,\mathcal{D}_{S_{1}})$ and $\ICG(n,\mathcal{D}_{S_{2}})$, respectively.\qed
\end{proof}

\begin{cor}
\label{regularisocor}
Let $n\geq1$ be an integer. Set $d_{0}\in\mathcal{D}_{[n]}\setminus\{n\}$. Then $(a)$ implies $(b)$, where $(a)$ and $(b)$ are the two following statements.
\begin{itemize}
\item[\rm{(a)}]For any subsets $\mathcal{D}_{S_{1}},\mathcal{D}_{S_{2}}\subseteq\mathcal{D}_{[n]}\setminus\{n\}$ such that
\begin{itemize}
\item[\rm{(1)}]$d_{0}\in\mathcal{D}_{S_{1}}\cap\mathcal{D}_{S_{2}}$; and
\item[\rm{(2)}]$\spec(\ICG(n,\mathcal{D}_{S_{1}}))=\spec(\ICG(n,\mathcal{D}_{S_{2}}))$,
\end{itemize}
we have $\mathcal{D}_{S_{1}}=\mathcal{D}_{S_{2}}$.

\item[\rm{(b)}]For any subsets $\mathcal{D}_{S_{1}},\mathcal{D}_{S_{2}}\subseteq\mathcal{D}_{[n]}\setminus\{n\}$ such that
\begin{itemize}
\item[\rm{(1)}]$d_{0}\notin\mathcal{D}_{S_{1}}\triangle\mathcal{D}_{S_{2}}$; and
\item[\rm{(2)}]$\spec(\ICG(n,\mathcal{D}_{S_{1}}))=\spec(\ICG(n,\mathcal{D}_{S_{2}}))$,
\end{itemize}
we have $\mathcal{D}_{S_{1}}=\mathcal{D}_{S_{2}}$.
\end{itemize}
\end{cor}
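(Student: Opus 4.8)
The plan is to assume statement $(a)$ and deduce statement $(b)$ by a two-case analysis on the location of $d_0$, using the complementation principle of Lemma \ref{regulariso} to reduce the awkward case to the one already handled by $(a)$.

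First I would fix arbitrary subsets $\mathcal{D}_{S_1},\mathcal{D}_{S_2}\subseteq\mathcal{D}_{[n]}\setminus\{n\}$ satisfying the hypotheses of $(b)$, namely $d_0\notin\mathcal{D}_{S_1}\triangle\mathcal{D}_{S_2}$ and $\spec(\ICG(n,\mathcal{D}_{S_1}))=\spec(\ICG(n,\mathcal{D}_{S_2}))$. The condition $d_0\notin\mathcal{D}_{S_1}\triangle\mathcal{D}_{S_2}$ forces exactly one of two alternatives: either $d_0\in\mathcal{D}_{S_1}\cap\mathcal{D}_{S_2}$, or $d_0\notin\mathcal{D}_{S_1}\cup\mathcal{D}_{S_2}$. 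In the first case, conditions (1) and (2) of $(a)$ hold verbatim, so $(a)$ gives $\mathcal{D}_{S_1}=\mathcal{D}_{S_2}$ immediately.

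In the second case, where $d_0$ lies in neither set, I would pass to complements. Since $d_0\in\mathcal{D}_{[n]}\setminus\{n\}$ but $d_0\notin\mathcal{D}_{S_i}$, we obtain $d_0\in\overline{\mathcal{D}_{S_i}}$ for $i=1,2$; that is, $d_0\in\overline{\mathcal{D}_{S_1}}\cap\overline{\mathcal{D}_{S_2}}$. By Lemma \ref{regulariso} the complementary graphs remain isospectral, so $\overline{\mathcal{D}_{S_1}}$ and $\overline{\mathcal{D}_{S_2}}$ satisfy both hypotheses of $(a)$. Applying $(a)$ yields $\overline{\mathcal{D}_{S_1}}=\overline{\mathcal{D}_{S_2}}$, and since complementation within $\mathcal{D}_{[n]}\setminus\{n\}$ is an involution, taking complements once more returns $\mathcal{D}_{S_1}=\mathcal{D}_{S_2}$.

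There is no substantial obstacle here; the argument is a clean reduction rather than a computation. The only point that requires care is that the definition $\overline{\mathcal{D}_S}=\mathcal{D}_{[n]}\setminus\{n\}\setminus\mathcal{D}_S$ removes $n$ as well as the elements of $\mathcal{D}_S$, so that $\overline{\mathcal{D}_{S_i}}\subseteq\mathcal{D}_{[n]}\setminus\{n\}$ and complementation is genuinely involutive on subsets of $\mathcal{D}_{[n]}\setminus\{n\}$. This involution property is exactly what legitimizes the final step and what permits Lemma \ref{regulariso} to be invoked with both complements lying in the admissible family.
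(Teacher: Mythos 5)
Your proof is correct and matches the paper's own argument: the same case split on $d_{0}\in\mathcal{D}_{S_{1}}\cap\mathcal{D}_{S_{2}}$ versus $d_{0}\in\overline{\mathcal{D}_{S_{1}}}\cap\overline{\mathcal{D}_{S_{2}}}$, with Lemma \ref{regulariso} supplying isospectrality of the complements in the latter case so that statement $(a)$ applies. Your explicit remark that complementation within $\mathcal{D}_{[n]}\setminus\{n\}$ is an involution makes a step precise that the paper leaves implicit, but the route is the same.
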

\begin{proof}
Since $d_{0}\notin\mathcal{D}_{S_{1}}\triangle\mathcal{D}_{S_{2}}$, we have either $d_{0}\in\mathcal{D}_{S_{1}}\cap\mathcal{D}_{S_{2}}$ or $d_{0}\in\overline{\mathcal{D}_{S_{1}}}\cap\overline{\mathcal{D}_{S_{2}}}$. In the former case, by condition (a), we have $\mathcal{D}_{S_{1}}=\mathcal{D}_{S_{2}}$. In the latter case, by Lemma \ref{regulariso}, $\spec(\ICG(n,\overline{\mathcal{D}_{S_{1}}}))=\spec(\ICG(n,\overline{\mathcal{D}_{S_{2}}}))$. By condition (a), we have $\overline{\mathcal{D}_{S_{1}}}=\overline{\mathcal{D}_{S_{2}}}$ and so $\mathcal{D}_{S_{1}}=\mathcal{D}_{S_{2}}$.\qed

\end{proof}


\begin{lemma}\label{mcontradiction}
Let $n\geq1$ be an integer. Let $\ICG(n,\mathcal{D}_{S_{1}})$ and $\ICG(n,\mathcal{D}_{S_{2}})$ be isospectral integral circulant graphs. 
Let $\mathcal{D}_{R}$ be a subset of $\mathcal{D}_{[n]}$ such that $\forall d\in\mathcal{D}_{R}$, $\lambda_{d}(S_{1})=\lambda_{d}(S_{2})$. Let $\mathcal{D}_{T}\subseteq\mathcal{D}_{[n]}\setminus\mathcal{D}_{R}$ be a subset such that
\begin{itemize}
\item[\rm{(1)}]$\exists d_{0}\in\mathcal{D}_{T}$, s.t. $\forall d\in\mathcal{D}_{T}$, $\lambda_{d}(S_{1})=\lambda_{d_{0}}(S_{1})$; and
\item[\rm{(2)}]$\sum_{d\in\mathcal{D}_{T}}\phi(n/d)
>n-\sum_{d\in\mathcal{D}_{R}}\phi(n/d)-\sum_{d\in\mathcal{D}_{T}}\phi(n/d)$.
\end{itemize}
Then $\exists d_{0}'\in\mathcal{D}_{T}$, s.t. $\lambda_{d_{0}'}(S_{2})=\lambda_{d_{0}}(S_{1})$. In particular, if $\mathcal{D}_{T}=\{d_{0}\}$, then $\lambda_{d_{0}}(S_{1})=\lambda_{d_{0}}(S_{2})$.

\end{lemma}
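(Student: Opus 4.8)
The plan is to reformulate the hypothesis at the level of vertices rather than divisors and then finish with a pigeonhole count on the eigenvalue $\alpha:=\lambda_{d_{0}}(S_{1})$. Accordingly, set $R=\bigcup_{d\in\mathcal{D}_{R}}G_{n}(d)$ and $T=\bigcup_{d\in\mathcal{D}_{T}}G_{n}(d)$. Since $\mathcal{D}_{T}\subseteq\mathcal{D}_{[n]}\setminus\mathcal{D}_{R}$ and the sets $G_{n}(d)$ partition $[n]$ according to the value $\gcd(\cdot,n)$, we have $R\cap T=\emptyset$, $|R|=\sum_{d\in\mathcal{D}_{R}}\phi(n/d)$ and $|T|=\sum_{d\in\mathcal{D}_{T}}\phi(n/d)$.

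First I would check that $R$ is admissible for Lemma \ref{minusR}. For $k\in R$ put $d=\gcd(k,n)\in\mathcal{D}_{R}$; Lemma \ref{sameorbiteigen} gives $\lambda_{k}(S_{i})=\lambda_{d}(S_{i})$ for $i\in\{1,2\}$, and the hypothesis $\lambda_{d}(S_{1})=\lambda_{d}(S_{2})$ then forces $\lambda_{k}(S_{1})=\lambda_{k}(S_{2})$ for every $k\in R$. Since the two graphs are isospectral and $\alpha$ is one of their common eigenvalues, Lemma \ref{minusR} applied to the eigenvalue $\alpha$ with this $R$ yields
$$
|\mathcal{L}_{S_{1}}(\alpha)\setminus R|=|\mathcal{L}_{S_{2}}(\alpha)\setminus R|.
$$
Next I would bound the left-hand side from below: condition (1) says $\lambda_{d}(S_{1})=\alpha$ for all $d\in\mathcal{D}_{T}$, so by Lemma \ref{sameorbiteigen} every vertex of $T$ lies in $\mathcal{L}_{S_{1}}(\alpha)$; as $T\cap R=\emptyset$ this gives $T\subseteq\mathcal{L}_{S_{1}}(\alpha)\setminus R$, hence $|\mathcal{L}_{S_{2}}(\alpha)\setminus R|=|\mathcal{L}_{S_{1}}(\alpha)\setminus R|\geq|T|=\sum_{d\in\mathcal{D}_{T}}\phi(n/d)$.

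The final step is the pigeonhole count. Using $n=\sum_{d\in\mathcal{D}_{[n]}}\phi(n/d)$ from (\ref{phiproperty}) together with $R\cap T=\emptyset$, the number of vertices outside both $R$ and $T$ equals $n-\sum_{d\in\mathcal{D}_{R}}\phi(n/d)-\sum_{d\in\mathcal{D}_{T}}\phi(n/d)$, which by condition (2) is strictly less than $\sum_{d\in\mathcal{D}_{T}}\phi(n/d)\leq|\mathcal{L}_{S_{2}}(\alpha)\setminus R|$. Thus $\mathcal{L}_{S_{2}}(\alpha)\setminus R$ is a subset of $[n]\setminus R$ too large to be contained in $([n]\setminus R)\setminus T$, so it must meet $T$. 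Choosing $k\in\mathcal{L}_{S_{2}}(\alpha)\cap T$ and $d_{0}'=\gcd(k,n)\in\mathcal{D}_{T}$, Lemma \ref{sameorbiteigen} gives $\lambda_{d_{0}'}(S_{2})=\lambda_{k}(S_{2})=\alpha=\lambda_{d_{0}}(S_{1})$, which is the assertion; and when $\mathcal{D}_{T}=\{d_{0}\}$ necessarily $d_{0}'=d_{0}$, giving the ``in particular'' clause.

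The underlying mathematics is a transparent counting argument, so I expect no serious obstacle; the only place demanding care is the bookkeeping in the translation between the divisor sets $\mathcal{D}_{R},\mathcal{D}_{T}$ and their vertex unions $R,T$, namely invoking $|R|,|T|$ and $R\cap T=\emptyset$ correctly so that the right-hand side of condition (2) genuinely counts $|[n]\setminus(R\cup T)|$ and is therefore directly comparable to the lower bound on $|\mathcal{L}_{S_{2}}(\alpha)\setminus R|$.
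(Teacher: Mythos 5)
Your proof is correct and takes essentially the same route as the paper: both verify via Lemma \ref{sameorbiteigen} that $R$ qualifies for Lemma \ref{minusR}, bound $|\mathcal{L}_{S_{1}}(\alpha)\setminus R|$ from below by $\sum_{d\in\mathcal{D}_{T}}\phi(n/d)$ using condition (1), and then use condition (2) to force $\mathcal{L}_{S_{2}}(\alpha)$ to meet $T$, pulling $d_{0}'$ out as $\gcd(k,n)$ of a witness $k$. The only cosmetic difference is that the paper runs the final count as a proof by contradiction (assuming $\lambda_{d}(S_{2})\neq\lambda_{d_{0}}(S_{1})$ for all $d\in\mathcal{D}_{T}$), whereas you state it as a direct pigeonhole argument.
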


\begin{proof}
Set
\begin{align*}
\spec(\ICG(n,\mathcal{D}_{S_{1}}))
=\spec(\ICG(n,\mathcal{D}_{S_{2}}))=\left(
\begin{array}{cccc}
\nu_{1} & \nu_{2} & \ldots & \nu_{J} \\
m_{1} & m_{2} & \ldots & m_{J}
\end{array}
\right).
\end{align*}
Note that $\forall d\in\mathcal{D}_{R}$, $\lambda_{d}(S_{1})=\lambda_{d}(S_{2})$. By Lemma \ref{sameorbiteigen}, $\forall k\in R$, $\lambda_{k}(S_{1})=\lambda_{k}(S_{2})$. By Lemma \ref{minusR}, $\forall j\in[J]$,
\begin{equation}\label{mcontradictioneq1}
|\mathcal{L}_{S_{1}}(\nu_{j})\setminus R|=|\mathcal{L}_{S_{2}}(\nu_{j})\setminus R|.
\end{equation}
Set $\nu_{j_{0}}=\lambda_{d_{0}}(S_{1})$. We proceed our proof by contradiction. Assume that $\forall d\in\mathcal{D}_{T}$, $\lambda_{d}(S_{2})\neq\lambda_{d_{0}}(S_{1})=\nu_{j_{0}}$. By Lemma \ref{sameorbiteigen}, $\forall k\in T$, $\lambda_{k}(S_{2})\neq\nu_{j_{0}}$. Then we have
\begin{align*}
|\mathcal{L}_{S_{2}}(\nu_{j_{0}})\setminus R|
&=|\mathcal{L}_{S_{2}}(\nu_{j_{0}})\setminus R\setminus T|
\\&=\sum_{d\in\mathcal{D}_{\mathcal{L}_{S_{2}}(\nu_{j_{0}})}\setminus\mathcal{D}_{R}\setminus\mathcal{D}_{T}}\phi(n/d)\tag{by (\ref{Scard})}
\\&\leq\sum_{d\in\mathcal{D}_{[n]}\setminus\mathcal{D}_{R}\setminus\mathcal{D}_{T}}\phi(n/d)
\\&=n-\sum_{d\in\mathcal{D}_{R}}\phi(n/d)-\sum_{d\in\mathcal{D}_{T}}\phi(n/d)
\\&<\sum_{d\in\mathcal{D}_{T}}\phi(n/d)\tag{by condition (2)}.
\end{align*}
Note that $\mathcal{D}_{T}\subseteq\mathcal{D}_{[n]}\setminus\mathcal{D}_{R}$, that is, $\mathcal{D}_{T}\cap\mathcal{D}_{R}=\emptyset$. By condition (1), $\mathcal{D}_{T}\subseteq\mathcal{D}_{\mathcal{L}_{S_{1}}(\nu_{j_{0}})}\setminus\mathcal{D}_{R}$. Then
\begin{align*}
\sum_{d\in\mathcal{D}_{T}}\phi(n/d)
\leq\sum_{d\in\mathcal{D}_{\mathcal{L}_{S_{1}}(\nu_{j_{0}})}\setminus\mathcal{D}_{R}}\phi(n/d)
=|\mathcal{L}_{S_{1}}(\nu_{j_{0}})\setminus R|.\tag{by (\ref{Scard})}
\end{align*}
Combining the above two inequalities, we have
$$
|\mathcal{L}_{S_{2}}(\nu_{j_{0}})\setminus R|<|\mathcal{L}_{S_{1}}(\nu_{j_{0}})\setminus R|,
$$
which contradicts (\ref{mcontradictioneq1}).\qed


\end{proof}





\begin{cor}\label{1equal}
Let $n\geq1$ be an integer such that $\phi(n)\geq n/2$. Let $\ICG(n,\mathcal{D}_{S_{1}})$ and $\ICG(n,\mathcal{D}_{S_{2}})$ be isospectral integral circulant graphs. Then $\lambda_{1}(S_{1})=\lambda_{1}(S_{2})$.

\end{cor}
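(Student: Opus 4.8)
The plan is to derive this directly from Lemma \ref{mcontradiction}, instantiated at the single divisor $d_{0}=1$. The guiding observation is that the hypothesis $\phi(n)\geq n/2$ is exactly what makes the ``mass'' $\phi(n/1)=\phi(n)$ attached to the divisor $1$ large enough to outweigh everything else, so that the eigenvalue $\lambda_{1}(S_{2})$ is forced to agree with $\lambda_{1}(S_{1})$.

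First I would record, via Lemma \ref{lambdan}, that $\lambda_{n}(S_{1})=\lambda_{n}(S_{2})$. This lets me take $\mathcal{D}_{R}=\{n\}$, which then satisfies the hypothesis of Lemma \ref{mcontradiction} that $\lambda_{d}(S_{1})=\lambda_{d}(S_{2})$ for all $d\in\mathcal{D}_{R}$. Next I would set $\mathcal{D}_{T}=\{1\}$ and $d_{0}=1$; since $1\neq n$ we have $\mathcal{D}_{T}\subseteq\mathcal{D}_{[n]}\setminus\mathcal{D}_{R}$, and condition (1) of Lemma \ref{mcontradiction} holds trivially because $\mathcal{D}_{T}$ is a singleton.

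The only thing left to verify is condition (2) of Lemma \ref{mcontradiction}, namely $\sum_{d\in\mathcal{D}_{T}}\phi(n/d)>n-\sum_{d\in\mathcal{D}_{R}}\phi(n/d)-\sum_{d\in\mathcal{D}_{T}}\phi(n/d)$. Here the left-hand side equals $\phi(n)$, while $\sum_{d\in\mathcal{D}_{R}}\phi(n/d)=\phi(n/n)=\phi(1)=1$, so the inequality reduces to $\phi(n)>n-1-\phi(n)$, i.e.\ $2\phi(n)>n-1$. Since $\phi(n)\geq n/2$ gives $2\phi(n)\geq n>n-1$, this holds. With condition (2) confirmed, Lemma \ref{mcontradiction} applied to the singleton $\mathcal{D}_{T}=\{1\}$ yields $\lambda_{1}(S_{1})=\lambda_{1}(S_{2})$, which is the desired conclusion.

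There is no genuine obstacle here: the whole content is selecting the right instantiation of Lemma \ref{mcontradiction} and checking a one-line inequality. The one subtlety worth flagging is the boundary case $\phi(n)=n/2$ (for instance when $n$ is a power of $2$), where the strict inequality $2\phi(n)>n$ fails; keeping $n$ inside $\mathcal{D}_{R}$ rather than taking $\mathcal{D}_{R}=\emptyset$ is precisely what supplies the extra $+1$ (from $\phi(1)=1$) needed to cover that case.
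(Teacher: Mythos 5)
Your proposal is correct and matches the paper's own proof essentially verbatim: the paper likewise takes $\mathcal{D}_{R}=\{n\}$ (justified by Lemma \ref{lambdan}) and $\mathcal{D}_{T}=\{1\}$ in Lemma \ref{mcontradiction}, verifying condition (2) via the same chain $\phi(n/1)\geq n-\phi(n/1)>n-1-\phi(n/1)=n-\phi(n/n)-\phi(n/1)$. Your remark about the boundary case $\phi(n)=n/2$ being covered by the extra $\phi(1)=1$ from keeping $n$ in $\mathcal{D}_{R}$ is a fair observation but adds nothing beyond the paper's argument.
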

\begin{proof}
By Lemma \ref{lambdan}, $\lambda_{n}(S_{1})=\lambda_{n}(S_{2})$. Since $\phi(n)\geq n/2$, we have
$$\phi(n/1)\geq n-\phi(n/1)>n-1-\phi(n/1)=n-\phi(n/n)-\phi(n/1).$$
Taking $\{n\}$, $\{1\}$, as $\mathcal{D}_{R}$, $\mathcal{D}_{T}$, in Lemma \ref{mcontradiction}, we have $\lambda_{1}(S_{1})=\lambda_{1}(S_{2})$.\qed

\end{proof}
\subsubsection{Proof of (c) of Theorem \ref{main}}
\begin{lemma}\label{2331}
Set $n=2^{3}3$. Let $\mathcal{D}_{S_{1}}$ and $\mathcal{D}_{S_{2}}$ be two subsets of $\mathcal{D}_{[n]}\setminus\{n\}$ such that $2^{3}\in\mathcal{D}_{S_{1}}\cap\mathcal{D}_{S_{2}}$. Then $\spec(\ICG(n,\mathcal{D}_{S_{1}}))
=\spec(\ICG(n,\mathcal{D}_{S_{2}}))$ implies $\mathcal{D}_{S_{1}}=\mathcal{D}_{S_{2}}$.
\end{lemma}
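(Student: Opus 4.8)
The plan is to shrink the symmetric difference $\mathcal{D}_{S_1}\triangle\mathcal{D}_{S_2}$ to the empty set one divisor at a time. Here $n=24$ and $\mathcal{D}_{[n]}\setminus\{n\}=\{1,2,3,4,6,8,12\}$, with $\phi(n/d)$ equal to $8,4,4,2,2,2,1$ for $d=1,2,3,4,6,8,12$ respectively. I would first record the divisors already forced out of the symmetric difference: $12=n/2\notin\mathcal{D}_{S_1}\triangle\mathcal{D}_{S_2}$ by Lemma \ref{n/2notintriangle}, and $8=n/3\notin\mathcal{D}_{S_1}\triangle\mathcal{D}_{S_2}$ by Lemma \ref{n/pnotintriangle} (indeed $8\in\mathcal{D}_{S_1}\cap\mathcal{D}_{S_2}$ by hypothesis). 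Next, since $2\mid n$, Corollary \ref{n/2cor} applies; its part (a), restricted to the only odd divisors $1,3$ of $n$, reads $8\chi_{\mathcal{D}_{S_1}}(1)+4\chi_{\mathcal{D}_{S_1}}(3)=8\chi_{\mathcal{D}_{S_2}}(1)+4\chi_{\mathcal{D}_{S_2}}(3)$. Dividing by $4$ and using that $(a,b)\mapsto 2a+b$ is injective on $\{0,1\}^{2}$ gives $\chi_{\mathcal{D}_{S_1}}(1)=\chi_{\mathcal{D}_{S_2}}(1)$ and $\chi_{\mathcal{D}_{S_1}}(3)=\chi_{\mathcal{D}_{S_2}}(3)$, so $1,3\notin\mathcal{D}_{S_1}\triangle\mathcal{D}_{S_2}$. (Note that Lemma \ref{1notintriangle} is unavailable here, since $\phi(24)=8<12=n/2$.) At this point $\mathcal{D}_{S_1}\triangle\mathcal{D}_{S_2}\subseteq\{2,4,6\}$, and part (b) of Corollary \ref{n/2cor} (equivalently Lemma \ref{lambdan}) collapses to the single relation $2\delta_2+\delta_4+\delta_6=0$, where $\delta_d:=\chi_{\mathcal{D}_{S_1}}(d)-\chi_{\mathcal{D}_{S_2}}(d)\in\{-1,0,1\}$.

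The crux is then to break the tie among $\{2,4,6\}$. Writing $\Delta_g:=\lambda_g(S_1)-\lambda_g(S_2)=\delta_2\mathcal{R}_{12}(g)+\delta_4\mathcal{R}_6(g)+\delta_6\mathcal{R}_4(g)$, which follows from (\ref{ramanujan}) together with the fact that $\delta$ is supported on $\{2,4,6\}$, a short evaluation of Ramanujan sums gives $\Delta_1=\delta_4$, $\Delta_3=-2\delta_4$, $\Delta_2=2\delta_2-\delta_4-2\delta_6$, $\Delta_4=\Delta_8=-2\delta_2-\delta_4+2\delta_6$, and $\Delta_6=-4\delta_2+2\delta_4-2\delta_6$. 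The key linear-algebra observation is that combining any one equality $\Delta_g=0$ (for $g\in\{1,2,3,4,6,8\}$) with $2\delta_2+\delta_4+\delta_6=0$ forces, over $\delta\in\{-1,0,1\}^{3}$, the unique solution $\delta_2=\delta_4=\delta_6=0$: for instance $\Delta_1=0$ gives $\delta_4=0$, whence $\delta_6=-2\delta_2$ forces $\delta_2=\delta_6=0$. Hence it suffices to establish \emph{one} further pointwise identity $\lambda_g(S_1)=\lambda_g(S_2)$ at some orbit $g\in\{1,2,3,4,6,8\}$.

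To secure such an identity I would work at the orbit $g=1$, whose multiplicity $\phi(n)=8$ is the largest. By Lemmas \ref{lambdan} and \ref{lambdan/2} we have $\lambda_n(S_1)=\lambda_n(S_2)$ and $\lambda_{n/2}(S_1)=\lambda_{n/2}(S_2)$, so taking $R=\{12,24\}$ (the orbits $G_n(n)$ and $G_n(n/2)$) Lemma \ref{minusR} yields, for every eigenvalue $\nu$, the balance $|\mathcal{L}_{S_1}(\nu)\setminus R|=|\mathcal{L}_{S_2}(\nu)\setminus R|$, where each side is a sum of the weights $\phi(n/g)\in\{8,4,4,2,2,2\}$ over the orbits $g\in\{1,2,3,4,6,8\}$ sent to $\nu$. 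Applying this at $\nu=\lambda_1(S_1)$: if $\delta_4=\Delta_1\neq0$, then $\lambda_1(S_2)\neq\lambda_1(S_1)$, so the weight $8$ carried by the orbit $g=1$ of $S_1$ must be reproduced in $S_2$ by strictly smaller orbits summing to at least $8$. Since $\chi_{\mathcal{D}_S}(8)=1$ pins down $\lambda_1(S)=\chi_{\mathcal{D}_S}(4)-1-\chi_{\mathcal{D}_S}(12)\le 0$ (and similarly constrains the remaining orbit values to a short list), one can rule this out by checking the finitely many admissible value patterns; alternatively one invokes Lemma \ref{mcontradiction} with a set $\mathcal{D}_T$ of $\phi$-mass at least $12$ whenever some eigenvalue of $S_1$ coincides with $\lambda_1(S_1)$.

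The main obstacle is precisely this final step. Unlike the cases with $\phi(n)\ge n/2$, here Corollary \ref{1equal} does not apply (as $\phi(24)=8<12$), so $\lambda_1(S_1)=\lambda_1(S_2)$ is not automatic; moreover the mass inequality needed to run Lemma \ref{mcontradiction} on a singleton requires $\phi$-mass exceeding $11$, while even the top orbit $g=1$ contributes only $8$. Breaking the $\{2,4,6\}$ degeneracy therefore demands the finer multiplicity bookkeeping at the top orbit described above, and it is exactly here that the hypothesis $2^{3}\in\mathcal{D}_{S_1}\cap\mathcal{D}_{S_2}$ is indispensable: it fixes the contribution of the divisor $8$ to every eigenvalue, thereby pinning $\lambda_1$ (and the neighbouring orbit values) tightly enough to close the finite analysis.
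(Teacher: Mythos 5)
Your reductions are correct and track the paper's proof exactly up to the decisive point: you remove $12$ and $8$ from $\mathcal{D}_{S_1}\triangle\mathcal{D}_{S_2}$ via Lemmas \ref{n/2notintriangle} and \ref{n/pnotintriangle}, remove $1,3$ via Corollary \ref{n/2cor}(a) (your injectivity-of-$2a+b$ argument is equivalent to the paper's use of Corollary \ref{notincontradictioncor}), and arrive at $\delta$ supported on $\{2,4,6\}$ with $2\delta_2+\delta_4+\delta_6=0$. Your Ramanujan-sum table and the observation that any single further equality $\Delta_g=0$ forces $\delta=0$ over $\{-1,0,1\}^3$ are both correct, and they are a genuinely nicer packaging of the case structure: the residual solutions $\pm(1,-1,-1)$ and $\pm(0,1,-1)$ are precisely the paper's Subcase 1.4 and its Case 2.

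The genuine gap is the final step, which you yourself flag as "the main obstacle" but do not carry out. Establishing $\lambda_1(S_1)=\lambda_1(S_2)$ (or any other $\Delta_g=0$) in the residual cases is not a routine consequence of the multiplicity bookkeeping you sketch: with $R=\{12,24\}$ the balance $|\mathcal{L}_{S_1}(\nu)\setminus R|=|\mathcal{L}_{S_2}(\nu)\setminus R|$ from Lemma \ref{minusR} can a priori be satisfied by rebalancing the weight $8$ of the orbit $g=1$ against smaller orbits (e.g.\ $4+4$ from $g=2,3$, or $4+2+2$), so nothing short of evaluating the actual orbit values $\lambda_g(S_i)$ for every admissible common part $\mathcal{D}_{S_1}\cap\mathcal{D}_{S_2}$ closes the argument --- and in the residual case $(\delta_2,\delta_4,\delta_6)=\pm(0,1,-1)$ one has $\Delta_g\neq0$ for \emph{all} $g\in\{1,2,3,4,6,8\}$, so every eigenvalue class must be re-examined, not just the one at $g=1$. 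Your fallback via Lemma \ref{mcontradiction} also does not apply unconditionally: with $\mathcal{D}_R=\{12,24\}$ condition (2) demands $\sum_{d\in\mathcal{D}_T}\phi(n/d)>11$, which no singleton meets (the largest weight is $\phi(24)=8$), and the needed coincidence $\lambda_1(S_1)=\lambda_2(S_1)$ that would give mass $8+4=12$ holds only for some choices of the common part. This finite enumeration over $\mathcal{D}_{S_1}\cap\mathcal{D}_{S_2}\subseteq\{8\}\cup\{1,3,12\}$ (resp.\ $\{8\}\cup\{1,2,3,12\}$) is exactly what the paper performs in Tables \ref{table2331a} and \ref{table2331b}, where the hypothesis $2^3\in\mathcal{D}_{S_1}\cap\mathcal{D}_{S_2}$ keeps the table to $8$ (resp.\ $16$) rows; until you exhibit that check (or a genuinely value-free argument replacing it), the proof is incomplete at precisely the step where all the difficulty is concentrated.
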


\begin{proof}
By Lemma \ref{lambdan}, $\lambda_{n}(S_{1})=\lambda_{n}(S_{2})$. By (\ref{ramanujan}) and (\ref{lambdaphimu}), $\sum_{d\in\mathcal{D}_{S_{1}}}\phi(n/d)=\sum_{d\in\mathcal{D}_{S_{2}}}\phi(n/d)$ and so
\begin{equation}\label{2331eq1}
\sum_{d\in\mathcal{D}_{S_{1}}\setminus\mathcal{D}_{S_{2}}}\phi(n/d)
=\sum_{d\in\mathcal{D}_{S_{2}}\setminus\mathcal{D}_{S_{1}}}\phi(n/d).
\end{equation}
By Lemmas \ref{n/2notintriangle} and \ref{n/pnotintriangle}, $2^{2}3,2^{3}\notin\mathcal{D}_{S_{1}}\triangle\mathcal{D}_{S_{2}}$ and so $\mathcal{D}_{S_{1}}\triangle\mathcal{D}_{S_{2}}\subseteq\{1,3,2,2\cdot3,2^{2}\}$. By (a) of Corollary \ref{n/2cor}, $\sum_{d\in\mathcal{D}_{S_{1}}\cap\{1,3\}}\phi(n/d)
=\sum_{d\in\mathcal{D}_{S_{2}}\cap\{1,3\}}\phi(n/d)$ and so
$$
\sum_{d\in(\mathcal{D}_{S_{1}}\setminus\mathcal{D}_{S_{2}})\cap\{1,3\}}\phi(n/d)
=\sum_{d\in(\mathcal{D}_{S_{2}}\setminus\mathcal{D}_{S_{1}})\cap\{1,3\}}\phi(n/d).
$$
Taking $\{1,3\}$, $\phi(n/\cdot)$, $(\mathcal{D}_{S_{1}}\setminus\mathcal{D}_{S_{2}})\cap\{1,3\}$, $(\mathcal{D}_{S_{2}}\setminus\mathcal{D}_{S_{1}})\cap\{1,3\}$, $\phi(n/\cdot)$, as $A$, $f$, $A_{1}$, $A_{2}$, in Corollary \ref{notincontradictioncor}, we have either $\phi(n/1)=\phi(n/3)$, which is impossible, or $(\mathcal{D}_{S_{1}}\triangle\mathcal{D}_{S_{2}})\cap\{1,3\}=(\mathcal{D}_{S_{1}}\cap\{1,3\})\triangle(\mathcal{D}_{S_{2}}\cap\{1,3\})=\emptyset$. Therefore,
\begin{equation}\label{2331eq4}
\mathcal{D}_{S_{1}}\triangle\mathcal{D}_{S_{2}}\subseteq\{2,2\cdot3,2^{2}\}
\end{equation}
To prove $\mathcal{D}_{S_{1}}=\mathcal{D}_{S_{2}}$, we rule out the following 3 cases.
\begin{itemize}
\item Case 1: $|\mathcal{D}_{S_{1}}\triangle\mathcal{D}_{S_{2}}|=3$\\
Then $\mathcal{D}_{S_{1}}\triangle\mathcal{D}_{S_{2}}=\{2,2\cdot3,2^{2}\}$. Note that $\phi(n/2)>\phi(n/(2\cdot3))=\phi(n/2^{2})$ and recall (\ref{2331eq1}). Taking $\{2,2\cdot3,2^{2}\}$, $\phi(n/\cdot)$, $\{2,2\cdot3\}$, $\mathcal{D}_{S_{1}}\setminus\mathcal{D}_{S_{2}}$, $\mathcal{D}_{S_{2}}\setminus\mathcal{D}_{S_{1}}$, as $A$, $f$, $B$, $A_{1}$, $A_{2}$, in Lemma \ref{notincontradiction}, we have
    \begin{equation}\label{2331eq2}
    \{2,2\cdot3\}\nsubseteq\mathcal{D}_{S_{1}}\setminus\mathcal{D}_{S_{2}}\quad\text{and}\quad \{2,2\cdot3\}\nsubseteq\mathcal{D}_{S_{2}}\setminus\mathcal{D}_{S_{1}}.
    \end{equation}
Taking $\{2,2\cdot3,2^{2}\}$, $\phi(n/\cdot)$, $\{2,2^{2}\}$, $\mathcal{D}_{S_{1}}\setminus\mathcal{D}_{S_{2}}$, $\mathcal{D}_{S_{2}}\setminus\mathcal{D}_{S_{1}}$, as $A$, $f$, $B$, $A_{1}$, $A_{2}$, in Lemma \ref{notincontradiction}, we have
    \begin{equation}\label{2331eq3}
    \{2,2^{2}\}\nsubseteq\mathcal{D}_{S_{1}}\setminus\mathcal{D}_{S_{2}}\quad\text{and}\quad \{2,2^{2}\}\nsubseteq\mathcal{D}_{S_{2}}\setminus\mathcal{D}_{S_{1}}.
    \end{equation}
With out loss of generality, we have
\begin{itemize}
\item Subcase 1.1: $\mathcal{D}_{S_{1}}\setminus\mathcal{D}_{S_{2}}=\{2,2\cdot3,2^{2}\}$ and $\mathcal{D}_{S_{2}}\setminus\mathcal{D}_{S_{1}}=\emptyset$\\
    This contradicts (\ref{2331eq2}) and (\ref{2331eq3}).
\item Subcase 1.2: $\mathcal{D}_{S_{1}}\setminus\mathcal{D}_{S_{2}}=\{2,2\cdot3\}$ and $\mathcal{D}_{S_{2}}\setminus\mathcal{D}_{S_{1}}=\{2^{2}\}$\\
    This contradicts (\ref{2331eq2}).
\item Subcase 1.3: $\mathcal{D}_{S_{1}}\setminus\mathcal{D}_{S_{2}}=\{2,2^{2}\}$ and $\mathcal{D}_{S_{2}}\setminus\mathcal{D}_{S_{1}}=\{2\cdot3\}$\\
    This contradicts (\ref{2331eq3}).
\item Subcase 1.4: $\mathcal{D}_{S_{1}}\setminus\mathcal{D}_{S_{2}}=\{2\}$ and $\mathcal{D}_{S_{2}}\setminus\mathcal{D}_{S_{1}}=\{2\cdot3,2^{2}\}$\\
    Recall $2^{3}\in\mathcal{D}_{S_{1}}\cap\mathcal{D}_{S_{2}}$. By Table \ref{table2331a}, $\spec(\ICG(n,\mathcal{D}_{S_{1}}))\neq\spec(\ICG(n,\mathcal{D}_{S_{2}}))$, which is a contradiction.

\end{itemize}
\end{itemize}
    \begin{table}[!h]
    \centering
{\tiny
    \caption{$n=2^{3}3$, $\mathcal{D}_{S_{1}}\setminus\mathcal{D}_{S_{2}}=\{2\}$, $\mathcal{D}_{S_{2}}\setminus\mathcal{D}_{S_{1}}=\{2\cdot3,2^{2}\}$ and $2^{3}\in\mathcal{D}_{S_{1}}\cap\mathcal{D}_{S_{2}}$\label{table2331a}}
    \begin{tabular}{|c|c|c|}
    \hline
    $\mathcal{D}_{S_{1}}\cap\mathcal{D}_{S_{2}}$ &   $\spec(\ICG(n,\mathcal{D}_{S_{1}}))$    &
    $\spec(\ICG(n,\mathcal{D}_{S_{2}}))$  \\\hline

 $\{2^{3}\}$

    &

$\left(
\begin{array}{cccccc}
6 & 2 & 1 & -1 & -2 &-3 \\
2 & 4 & 4 & 8 & 2 & 4
\end{array}
\right)$

    &

$\left(
\begin{array}{cccc}
6 & 2 & 0 & -4\\
2 & 2 & 16 & 4
\end{array}
\right)$\\\hline

 $\{2^{2}\cdot3,2^{3}\}$

    &

$\left(
\begin{array}{ccccc}
7 & 2 & 1 & -1 & -2 \\
2 & 4 & 4 & 2 & 12
\end{array}
\right)$

    &

$\left(
\begin{array}{ccccc}
7 & 3 & 1 & -1 & -3\\
2 & 2 & 4 & 12 &4
\end{array}
\right)$\\\hline

 $\{3,2^{3}\}$

    &

$\left(
\begin{array}{cccccc}
10 & 2 & 1 & -1 & -2 & -7 \\
1 & 5 & 6 & 8 & 2 & 2
\end{array}
\right)$

    &

$\left(
\begin{array}{ccccc}
10 & 4 & 2 & 0 & -4\\
1 & 2 & 3 & 12 &6
\end{array}
\right)$\\\hline

 $\{3,2^{2}\cdot3,2^{3}\}$

    &

$\left(
\begin{array}{ccccccc}
11 & 3 & 2 & 1 & -1 & -2 & -6 \\
1 & 1 & 6 & 4 & 2 & 8 & 2
\end{array}
\right)$

    &

$\left(
\begin{array}{ccccc}
11 & 5 & 3 & -1 & -3\\
1 & 2 & 3 & 12 &6
\end{array}
\right)$\\\hline

 $\{1,2^{3}\}$

    &

$\left(
\begin{array}{cccccc}
14 & 2 & 1 & -1 & -2 & -7 \\
1 & 4 & 6 & 8 & 3 & 2
\end{array}
\right)$

    &

$\left(
\begin{array}{cccccc}
14 & 4 & 2 & 0 & -2 & -4\\
1 & 2 & 2 & 12 & 1& 6
\end{array}
\right)$\\\hline

 $\{1,2^{2}\cdot3,2^{3}\}$

    &

$\left(
\begin{array}{cccccc}
15 & 2 & 1 & -1 & -2 & -6 \\
1 & 6 & 4 & 3 & 8 & 2
\end{array}
\right)$

    &

$\left(
\begin{array}{ccccc}
15 & 5 & 3 & -1 & -3\\
1 & 2 & 2 & 13 &6
\end{array}
\right)$\\\hline

 $\{1,3,2^{3}\}$

    &

$\left(
\begin{array}{ccccccc}
18 & 2 & 1 & -1 & -2 & -3 & -6 \\
1 & 4 & 4 & 8 & 2 & 4 &1
\end{array}
\right)$

    &

$\left(
\begin{array}{ccccc}
18 & 2 & 0 & -4 & -6\\
1 & 2 & 16 & 4 & 1
\end{array}
\right)$\\\hline

 $\{1,3,2^{2}\cdot3,2^{3}\}$

    &

$\left(
\begin{array}{cccccc}
19 & 2 & 1 & -1 & -2 & -5 \\
1 & 4 & 4 & 2 & 12 & 1
\end{array}
\right)$

    &

$\left(
\begin{array}{cccccc}
19 & 3 & 1 & -1 & -3 &-5\\
1 & 2 & 4 & 12 &4 &1
\end{array}
\right)$\\\hline

    \end{tabular}

 }
 \end{table}
\begin{itemize}
\item Case 2: $|\mathcal{D}_{S_{1}}\triangle\mathcal{D}_{S_{2}}|=2$\\
Then $\mathcal{D}_{S_{1}}\triangle\mathcal{D}_{S_{2}}=\{2,2\cdot3\}$, $\{2,2^{2}\}$ or $\{2\cdot3,2^{2}\}$. Set $\mathcal{D}_{S_{1}}\triangle\mathcal{D}_{S_{2}}=\{a_{1},a_{2}\}$. Taking $\mathcal{D}_{S_{1}}\triangle\mathcal{D}_{S_{2}}$, $\phi(n/\cdot)$, $\mathcal{D}_{S_{1}}\setminus\mathcal{D}_{S_{2}}$, $\mathcal{D}_{S_{2}}\setminus\mathcal{D}_{S_{1}}$, as $A$, $f$, $A_{1}$, $A_{2}$, in Corollary \ref{notincontradictioncor}, we have either $\phi(n/a_{1})=\phi(n/a_{2})$ while $|\mathcal{D}_{S_{1}}\setminus\mathcal{D}_{S_{2}}|=|\mathcal{D}_{S_{2}}\setminus\mathcal{D}_{S_{1}}|=1$, or $\mathcal{D}_{S_{1}}\triangle\mathcal{D}_{S_{2}}=\emptyset$ leading to a contradiction. Note that $\phi(n/2)\neq\phi(n/(2\cdot3))$, that $\phi(n/2)\neq\phi(n/(2^{2}))$, and that $\phi(n/(2\cdot3))=\phi(n/2^{2})$. Without loss of generality, we have $\mathcal{D}_{S_{1}}\setminus\mathcal{D}_{S_{2}}=\{2\cdot3\}$ and $\mathcal{D}_{S_{2}}\setminus\mathcal{D}_{S_{1}}=\{2^{2}\}$. Recall $2^{3}\in\mathcal{D}_{S_{1}}\cap\mathcal{D}_{S_{2}}$. By Table \ref{table2331b}, $\spec(\ICG(n,\mathcal{D}_{S_{1}}))\neq\spec(\ICG(n,\mathcal{D}_{S_{2}}))$, which is a contradiction.

\item Case 3: $|\mathcal{D}_{S_{1}}\triangle\mathcal{D}_{S_{2}}|=1$\\
Recall (\ref{2331eq1}). Taking $\mathcal{D}_{S_{1}}\triangle\mathcal{D}_{S_{2}}$, $\phi(n/\cdot)$, $\mathcal{D}_{S_{1}}\setminus\mathcal{D}_{S_{2}}$, $\mathcal{D}_{S_{2}}\setminus\mathcal{D}_{S_{1}}$, as $A$, $f$, $A_{1}$, $A_{2}$, in Lemma \ref{notincontradictionnew}, we have $\mathcal{D}_{S_{1}}\triangle\mathcal{D}_{S_{2}}=\emptyset$, which is a contradiction.


\end{itemize}
\begin{table}[!h]
\centering
{\tiny

\caption{$n=2^{3}3$, $\mathcal{D}_{S_{1}}\setminus\mathcal{D}_{S_{2}}=\{2\cdot3\}$, $\mathcal{D}_{S_{2}}\setminus\mathcal{D}_{S_{1}}=\{2^{2}\}$ and $2^{3}\in\mathcal{D}_{S_{1}}\cap\mathcal{D}_{S_{2}}$\label{table2331b}}
\begin{tabular}{|c|c|c|}\hline
$\mathcal{D}_{S_{1}}\cap\mathcal{D}_{S_{2}}$ &   $\spec(\ICG(n,\mathcal{D}_{S_{1}}))$    &
    $\spec(\ICG(n,\mathcal{D}_{S_{2}}))$  \\\hline

 $\{2^{3}\}$

    &

$\left(
\begin{array}{cccccc}
4 & 2 & 1 & 0 & -1 & -3 \\
2 & 4 & 4 & 2 & 8 & 4
\end{array}
\right)$

    &

$\left(
\begin{array}{ccc}
4& 0 & -2\\
4 & 12 & 8
\end{array}
\right)$\\\hline

 $\{2^{2}\cdot3,2^{3}\}$

    &

$\left(
\begin{array}{cccc}
5 & 2 & 1 &-2 \\
2 & 4 & 6 & 12
\end{array}
\right)$

    &

$\left(
\begin{array}{cc}
5 & -1\\
4 & 20
\end{array}
\right)$\\\hline

     $\{2,2^{3}\}$

    &

$\left(
\begin{array}{cccc}
8 & 2 & -1 &-4 \\
2 & 4 & 16 & 2
\end{array}
\right)$

    &

$\left(
\begin{array}{ccc}
8 & 0 & -4\\
2 & 18 & 4
\end{array}
\right)$\\\hline

$\{2,2^{2}\cdot3,2^{3}\}$

    &

$\left(
\begin{array}{ccccc}
9 & 1 & 0 &-2  &-3\\
2 & 4 & 8 & 8 &2
\end{array}
\right)$

    &

$\left(
\begin{array}{cccc}
9 & 1 & -1 & -3\\
2 & 6 & 12 & 4
\end{array}
\right)$\\\hline

     $\{3,2^{3}\}$

    &

$\left(
\begin{array}{cccccc}
8& 5 & 2 & 0 & -1 & -3\\
1 & 2 & 4 & 3 & 8 & 6
\end{array}
\right)$

    &

$\left(
\begin{array}{cccccc}
8& 4 & 2 & 0 & -2 & -6\\
1 & 2 & 2 & 13 & 4 &2
\end{array}
\right)$\\\hline

  $\{3, 2^{2}\cdot3,2^{3}\}$

    &

$\left(
\begin{array}{cccc}
9 & 6 & 1 &-2 \\
1 & 2 & 7 & 14
\end{array}
\right)$

    &

$\left(
\begin{array}{cccccc}
9 & 5 & 3 & 1 & -1 &-5\\
1 & 2 & 2 & 1 & 16 & 2
\end{array}
\right)$\\\hline

    $\{3,2,2^{3}\}$

    &

$\left(
\begin{array}{ccccccc}
12 & 4 & 3 & 2 &-1 &-4 & -5\\
1 & 1 & 2& 4 & 12 & 2& 2
\end{array}
\right)$

    &

$\left(
\begin{array}{cccc}
12 & 4 & 0 & -8\\
1& 1 & 20 & 2
\end{array}
\right)$\\\hline

      $\{3,2,2^{2}\cdot3,2^{3}\}$

    &

$\left(
\begin{array}{cccccccc}
13& 5 & 4 & 1 & 0 &-2 & -3 & -4\\
1& 1 & 2 & 4 & 4 & 8 &2 & 2
\end{array}
\right)$

    &

$\left(
\begin{array}{ccccc}
13 & 5 & 1 & -1 & -7\\
1 & 1 & 8& 12 & 2
\end{array}
\right)$\\\hline

      $\{1,2^{3}\}$

    &

$\left(
\begin{array}{ccccccc}
12 & 5 & 2 & 0 & -1 & -3 & -4\\
1 & 2 & 4 & 2& 8 & 6 & 1
\end{array}
\right)$

    &

$\left(
\begin{array}{ccccccc}
12 & 4 & 2 & 0 & -2 & -4 & -6\\
1& 2 & 2 &12 & 4 & 1 & 2
\end{array}
\right)$\\\hline

        $\{1,2^{2}\cdot3,2^{3}\}$

    &

$\left(
\begin{array}{ccccc}
13 & 6 & 1 & -2 & -3\\
1 & 2& 6 & 14 & 1
\end{array}
\right)$

    &

$\left(
\begin{array}{cccccc}
13 & 5 & 3 & -1 & -3 & -5\\
1 & 2 & 2 & 16 & 1 & 2
\end{array}
\right)$\\\hline

        $\{1,2,2^{3}\}$

    &

$\left(
\begin{array}{ccccccc}
16 & 3 & 2 & 0 & -1 & -4 & -5\\
1 & 2 & 4 & 1 & 12 & 2& 2
\end{array}
\right)$

    &

$\left(
\begin{array}{ccc}
16 & 0 & -8\\
1 & 21 & 2
\end{array}
\right)$\\\hline

          $\{1,2,2^{2}\cdot3,2^{3}\}$

    &

$\left(
\begin{array}{ccccccc}
17 & 4 & 1 & 0 & -2 & -3 & -4\\
1 &2 & 5 & 4 & 8 & 2 & 2
\end{array}
\right)$

    &

$\left(
\begin{array}{cccc}
17 & 1 & -1 & -7\\
1 & 9 & 12 & 2
\end{array}
\right)$\\\hline

          $\{1,3,2^{3}\}$

    &

$\left(
\begin{array}{ccccccc}
16 & 2 & 1 & 0 & -1 & -3 & -8\\
1 & 4 & 4 & 2 & 8 & 4& 1
\end{array}
\right)$

    &

$\left(
\begin{array}{ccccc}
16 & 4 & 0 & -2 & -8\\
1 & 2 & 12 & 8 & 1
\end{array}
\right)$\\\hline

             $\{1,3,2^{2}\cdot3,2^{3}\}$

    &

$\left(
\begin{array}{ccccc}
17& 2 & 1 & -2 & -7\\
1& 4& 6 & 12 &1
\end{array}
\right)$

    &

$\left(
\begin{array}{cccc}
17& 5 & -1 & -7\\
1 & 2 & 20 & 1
\end{array}
\right)$\\\hline

             $\{1,3,2,2^{3}\}$

    &

$\left(
\begin{array}{cccc}
20 & 2 & -1 & -4\\
1 & 4 & 16 & 3
\end{array}
\right)$

    &

$\left(
\begin{array}{ccc}
20 & 0 & -4\\
1 & 18 & 5
\end{array}
\right)$\\\hline

             $\{1,3,2,2^{2}\cdot3,2^{3}\}$

    &

$\left(
\begin{array}{ccccc}
21 & 1 & 0 & -2 & -3\\
1 & 4 & 8 & 8 & 3
\end{array}
\right)$

    &

$\left(
\begin{array}{cccc}
21 & 1 & -1 & -3\\
1 & 6 & 12 & 5
\end{array}
\right)$\\\hline

        \end{tabular}
}
        \end{table}
This completes the proof.\qed



\end{proof}

\begin{theorem}\label{233}
Set $n=2^{3}3$. Let $\mathcal{D}_{S_{1}}$ and $\mathcal{D}_{S_{2}}$ be two subsets of $\mathcal{D}_{[n]}\setminus\{n\}$. Then $\spec(\ICG(n,\mathcal{D}_{S_{1}}))
=\spec(\ICG(n,\mathcal{D}_{S_{2}}))$ implies $\mathcal{D}_{S_{1}}=\mathcal{D}_{S_{2}}$.

\end{theorem}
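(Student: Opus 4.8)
The plan is to reduce the general statement to the special case already settled in Lemma~\ref{2331}, which assumes $2^{3}\in\mathcal{D}_{S_{1}}\cap\mathcal{D}_{S_{2}}$. The bridge between the two is provided by the complementation principle of Corollary~\ref{regularisocor} together with the structural constraint of Lemma~\ref{n/pnotintriangle}; the key observation specific to $n=2^{3}3$ is that the distinguished divisor $2^{3}$ appearing in Lemma~\ref{2331} is precisely $n/3$, where $3$ is the unique odd prime divisor of $n$.

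First I would invoke Lemma~\ref{n/pnotintriangle} with the odd prime $p=3$. Since $n/3=2^{3}$, this immediately yields
$$
2^{3}\notin\mathcal{D}_{S_{1}}\triangle\mathcal{D}_{S_{2}}.
$$
Next I would observe that Lemma~\ref{2331} is exactly statement (a) of Corollary~\ref{regularisocor} taken with $d_{0}=2^{3}$: it asserts that whenever $2^{3}\in\mathcal{D}_{S_{1}}\cap\mathcal{D}_{S_{2}}$ and the two graphs are isospectral, one has $\mathcal{D}_{S_{1}}=\mathcal{D}_{S_{2}}$. Hence Corollary~\ref{regularisocor} delivers statement (b) for $d_{0}=2^{3}$, namely that any two isospectral integral circulant graphs $\ICG(n,\mathcal{D}_{S_{1}})$ and $\ICG(n,\mathcal{D}_{S_{2}})$ with $2^{3}\notin\mathcal{D}_{S_{1}}\triangle\mathcal{D}_{S_{2}}$ satisfy $\mathcal{D}_{S_{1}}=\mathcal{D}_{S_{2}}$. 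Combining this with the first step---where I have just verified that $2^{3}\notin\mathcal{D}_{S_{1}}\triangle\mathcal{D}_{S_{2}}$---completes the argument and forces $\mathcal{D}_{S_{1}}=\mathcal{D}_{S_{2}}$.

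Essentially all of the genuine difficulty has already been absorbed into Lemma~\ref{2331}, whose proof carries out the delicate case analysis (ruling out $|\mathcal{D}_{S_{1}}\triangle\mathcal{D}_{S_{2}}|\in\{1,2,3\}$) and the exhaustive spectrum computations recorded in Tables~\ref{table2331a} and~\ref{table2331b}. Consequently, the only new idea needed for Theorem~\ref{233} is the arithmetic coincidence $2^{3}=n/3$, which lets the complementation reduction of Corollary~\ref{regularisocor} dispose of the remaining case $2^{3}\in\mathcal{D}_{S_{1}}\triangle\mathcal{D}_{S_{2}}$ at no extra cost. I therefore do not anticipate any real obstacle at this stage; the proof is a short assembly of Lemma~\ref{n/pnotintriangle}, Lemma~\ref{2331}, and Corollary~\ref{regularisocor}, with the recognition of $2^{3}$ as $n/3$ being the single conceptual link.
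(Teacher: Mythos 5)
Your proposal is correct and coincides with the paper's own proof: the paper likewise applies Lemma~\ref{n/pnotintriangle} with the odd prime $3$ to conclude $2^{3}=n/3\notin\mathcal{D}_{S_{1}}\triangle\mathcal{D}_{S_{2}}$, and then combines Lemma~\ref{2331} (as hypothesis~(a) of Corollary~\ref{regularisocor} with $d_{0}=2^{3}$) with the complementation reduction of that corollary. Your identification of the coincidence $2^{3}=n/3$ as the single conceptual link is exactly the mechanism the paper uses.
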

\begin{proof}
By Lemma \ref{n/pnotintriangle}, $2^{3}\notin\mathcal{D}_{S_{1}}\triangle\mathcal{D}_{S_{2}}$. By Lemma \ref{2331} and Corollary \ref{regularisocor}, we obtain the result.\qed

\end{proof}




\begin{theorem}\label{23q}
Set $n=2^{3}q$ with prime $q>3$. Let $\mathcal{D}_{S_{1}}$ and $\mathcal{D}_{S_{2}}$ be two subsets of $\mathcal{D}_{[n]}\setminus\{n\}$. Then $\spec(\ICG(n,\mathcal{D}_{S_{1}}))=\spec(\ICG(n,\mathcal{D}_{S_{2}}))$ implies $\mathcal{D}_{S_{1}}=\mathcal{D}_{S_{2}}$.

\end{theorem}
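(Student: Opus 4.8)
The plan is to prove that the symmetric difference $\mathcal{D}_{S_{1}}\triangle\mathcal{D}_{S_{2}}$ is empty by ruling out each divisor in $\mathcal{D}_{[n]}\setminus\{n\}=\{1,2,4,8,q,2q,4q\}$ one at a time. The central device is the parity splitting of Corollary \ref{n/2cor}, which separates the contributions of the odd divisors $\{1,q\}$ from those of the even divisors $\{2,4,8,2q,4q\}$. The quantitative input is the list of values $\phi(n/d)$: namely $\phi(n/1)=4(q-1)$, $\phi(n/2)=2(q-1)$, $\phi(n/4)=\phi(n/8)=q-1$, $\phi(n/q)=4$, $\phi(n/2q)=2$, $\phi(n/4q)=1$. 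The hypothesis $q>3$ is what keeps the relevant values strictly separated; this is precisely the separation that collapses when $q=3$ (there $\phi(n/4)=\phi(n/2q)=2$), forcing the table-based argument of Theorem \ref{233}.

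First I would remove two divisors immediately: Lemma \ref{n/pnotintriangle} applied to the odd prime $p=q$ gives $n/q=8\notin\mathcal{D}_{S_{1}}\triangle\mathcal{D}_{S_{2}}$, and Lemma \ref{n/2notintriangle} gives $n/2=4q\notin\mathcal{D}_{S_{1}}\triangle\mathcal{D}_{S_{2}}$. Next, for the odd divisors, Corollary \ref{n/2cor}(a) yields
\[
\sum_{d\in\{1,q\}}\chi_{\mathcal{D}_{S_{1}}}(d)\phi(n/d)=\sum_{d\in\{1,q\}}\chi_{\mathcal{D}_{S_{2}}}(d)\phi(n/d).
\]
Setting $A_{i}=(\mathcal{D}_{S_{i}}\setminus\mathcal{D}_{S_{3-i}})\cap\{1,q\}$, these are disjoint subsets of $\{1,q\}$ with equal $\phi(n/\cdot)$-sums, so Corollary \ref{notincontradictioncor} applies; since $\phi(n/1)=4(q-1)\neq 4=\phi(n/q)$ for $q>2$, the alternative (a) of that corollary is excluded, leaving $A_{1}\cup A_{2}=\emptyset$, i.e.\ $1,q\notin\mathcal{D}_{S_{1}}\triangle\mathcal{D}_{S_{2}}$.

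Then, for the even divisors, Corollary \ref{n/2cor}(b) gives an equality summed over $\{2,4,8,2q,4q\}$ (the divisor $8q=n$ contributes nothing since $\mathcal{D}_{S_{1}},\mathcal{D}_{S_{2}}\subseteq\mathcal{D}_{[n]}\setminus\{n\}$). Cancelling the two terms $d=8$ and $d=4q$, on which $\chi_{\mathcal{D}_{S_{1}}}$ and $\chi_{\mathcal{D}_{S_{2}}}$ already agree by the previous step, reduces it to
\[
\sum_{d\in\{2,4,2q\}}\chi_{\mathcal{D}_{S_{1}}}(d)\phi(n/d)=\sum_{d\in\{2,4,2q\}}\chi_{\mathcal{D}_{S_{2}}}(d)\phi(n/d).
\]
Ordered by size, the sequence $\phi(n/2q)=2<\phi(n/4)=q-1<\phi(n/2)=2(q-1)$ is a $1$-super sequence: one checks $q-1>2$ and $2(q-1)>2+(q-1)$, both equivalent to $q>3$. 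Taking $a_{j}=\chi_{\mathcal{D}_{S_{1}}}(d_{j})$ and $b_{j}=\chi_{\mathcal{D}_{S_{2}}}(d_{j})$ in $\{0,1\}$, Lemma \ref{supersequence1} with $c=1$ forces agreement on all of $\{2,4,2q\}$. Combining with the earlier steps, $\chi_{\mathcal{D}_{S_{1}}}$ and $\chi_{\mathcal{D}_{S_{2}}}$ agree on every divisor in $\mathcal{D}_{[n]}\setminus\{n\}$, so $\mathcal{D}_{S_{1}}=\mathcal{D}_{S_{2}}$.

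The main obstacle is the even-part comparison: verifying that $\{2,\,q-1,\,2(q-1)\}$ is genuinely super-increasing is exactly where $q>3$ enters, and it is also the exact point at which the argument would fail for $q=3$, so I would flag this comparison explicitly. A secondary care point is the bookkeeping in the even split, making sure that $8q=n$ is correctly discarded and that only the $d=8$ and $d=4q$ terms are cancelled before invoking Lemma \ref{supersequence1}; otherwise the reduction to $\{2,4,2q\}$ is purely routine.
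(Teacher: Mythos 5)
Your proof is correct and takes essentially the same route as the paper: the paper likewise reduces $\mathcal{D}_{S_{1}}\triangle\mathcal{D}_{S_{2}}$ to $\{2,2^{2},2q\}$ (importing, with $3$ replaced by $q$, the reduction from the proof of Lemma \ref{2331}, which rests on the same Lemmas \ref{n/2notintriangle} and \ref{n/pnotintriangle} together with Corollaries \ref{n/2cor} and \ref{notincontradictioncor}) and then applies Lemma \ref{supersequence1} to exactly your $1$-super sequence $(\phi(n/2q),\phi(n/2^{2}),\phi(n/2))=(2,\,q-1,\,2(q-1))$. Your only deviations are immaterial: you obtain the even-part equality from Corollary \ref{n/2cor}(b) by cancelling the $d=2^{3}$ and $d=2^{2}q$ terms rather than directly from $\lambda_{n}(S_{1})=\lambda_{n}(S_{2})$ restricted to the symmetric difference, and your remark that $q>3$ is precisely the super-sequence condition correctly pinpoints why the paper handles $n=2^{3}3$ separately in Theorem \ref{233}.
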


\begin{proof}
By Lemma \ref{lambdan}, $\lambda_{n}(S_{1})=\lambda_{n}(S_{2})$. By (\ref{ramanujan}) and (\ref{lambdaphimu}), $\sum_{d\in\mathcal{D}_{S_{1}}}\phi(n/d)
=\sum_{d\in\mathcal{D}_{S_{2}}}\phi(n/d)$ and so
$$
\sum_{d\in\mathcal{D}_{S_{1}}\setminus\mathcal{D}_{S_{2}}}\phi(n/d)
=\sum_{d\in\mathcal{D}_{S_{2}}\setminus\mathcal{D}_{S_{1}}}\phi(n/d).
$$
Similar to (\ref{2331eq4}) in the proof of Lemma \ref{2331}, replacing $3$ by $q$, we have $\mathcal{D}_{S_{1}}\triangle\mathcal{D}_{S_{2}}\subseteq\{2,2q,2^{2}\}$. Therefore,
$$
\sum_{d\in\{2,2^{2},2q\}}\chi_{\mathcal{D}_{S_{1}}}(d)\phi(n/d)
=\sum_{d\in\{2,2^{2},2q\}}\chi_{\mathcal{D}_{S_{2}}}(d)\phi(n/d),
$$
which is the condition (2) of Lemma \ref{supersequence1}. Set $(x_{0},x_{1},x_{2})=(\phi(n/2q),\phi(n/2^{2}),\phi(n/2))$, which is a $1$-super sequence. Set $(a_{0},a_{1},a_{2})=(\chi_{\mathcal{D}_{S_{1}}}(2q),\chi_{\mathcal{D}_{S_{1}}}(2^{2}),\chi_{\mathcal{D}_{S_{1}}}(2))$, $(b_{0},b_{1},b_{2})=(\chi_{\mathcal{D}_{S_{2}}}(2q),\chi_{\mathcal{D}_{S_{2}}}(2^{2}),\chi_{\mathcal{D}_{S_{2}}}(2))$, which clearly satisfies the condition (1) of Lemma \ref{supersequence1}. By Lemma \ref{supersequence1}, $(a_{0},a_{1},a_{2})=(b_{0},b_{1},b_{2})$, that is, $\mathcal{D}_{S_{1}}\cap\{2,2^{2},2q\}=\mathcal{D}_{S_{2}}\cap\{2,2^{2},2q\}$. Thus, $\mathcal{D}_{S_{1}}=\mathcal{D}_{S_{2}}$. This completes the proof.\qed

\end{proof}

\begin{lemma}\label{p3q1}
Set $n=p^{3}q$ with primes $3\leq p<q$. Let $\mathcal{D}_{S_{1}}$ and $\mathcal{D}_{S_{2}}$ be two subsets of $\mathcal{D}_{[n]}\setminus\{n\}$ such that $1\in\mathcal{D}_{S_{1}}\cap\mathcal{D}_{S_{2}}$. Then $\spec(\ICG(n,\mathcal{D}_{S_{1}}))
=\spec(\ICG(n,\mathcal{D}_{S_{2}}))$ implies $\mathcal{D}_{S_{1}}=\mathcal{D}_{S_{2}}$.

\end{lemma}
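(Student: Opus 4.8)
The plan is to trap the symmetric difference $\Delta=\mathcal{D}_{S_1}\triangle\mathcal{D}_{S_2}$ inside the three ``awkward'' divisors $\{p,q,pq\}$ and then annihilate it using two further eigenvalue identities. Record that $\mathcal{D}_{[n]}=\{1,p,p^2,p^3,q,pq,p^2q,n\}$, and abbreviate $u=\chi_{\mathcal{D}_{S_1}}(p)-\chi_{\mathcal{D}_{S_2}}(p)$, $v=\chi_{\mathcal{D}_{S_1}}(q)-\chi_{\mathcal{D}_{S_2}}(q)$, $w=\chi_{\mathcal{D}_{S_1}}(pq)-\chi_{\mathcal{D}_{S_2}}(pq)$, each in $\{-1,0,1\}$. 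First, since $p$ and $q$ are odd primes dividing $n$, Lemma \ref{n/pnotintriangle} gives $p^2q=n/p\notin\Delta$ and $p^3=n/q\notin\Delta$. Because $3\le p<q$ forces $\phi(n)/n=(1-\tfrac1p)(1-\tfrac1q)\ge\tfrac23\cdot\tfrac45>\tfrac12$, Corollary \ref{1equal} applies and yields $\lambda_1(S_1)=\lambda_1(S_2)$; by (\ref{ramanujan}) and (\ref{lambdaphimu}) one has $\lambda_1(S)=\sum_{d\in\mathcal{D}_S}\mu(n/d)$, whose only nonzero summands (for $d\ne n$) occur at $d=p^2,p^3,p^2q$ with values $+1,-1,-1$. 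Since $p^3,p^2q\notin\Delta$, this equality forces $p^2\notin\Delta$. Together with $1\notin\Delta$, immediate from the hypothesis $1\in\mathcal{D}_{S_1}\cap\mathcal{D}_{S_2}$, we obtain $\Delta\subseteq\{p,q,pq\}$.

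Next I would collect linear constraints on $(u,v,w)$. Lemma \ref{lambdan} gives $\lambda_n(S_1)=\lambda_n(S_2)$, i.e. $\sum_{d\in\mathcal{D}_{S_1}}\phi(n/d)=\sum_{d\in\mathcal{D}_{S_2}}\phi(n/d)$; restricting to $\Delta$ and dividing by $p(p-1)$ this becomes
\[
(q-1)u+pv+w=0.
\]
For a second relation I compute $\lambda_q$: a direct evaluation of (\ref{ramanujan})--(\ref{lambdaphimu}) on the class $\gcd(k,n)=q$ gives $\lambda_q(S)=(q-1)\chi_{\mathcal{D}_S}(p^3)-(q-1)\chi_{\mathcal{D}_S}(p^2)-\chi_{\mathcal{D}_S}(p^2q)$, which depends only on the membership of $p^2,p^3,p^2q$; as all three lie outside $\Delta$, we get $\lambda_q(S_1)=\lambda_q(S_2)$ for free.

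The decisive step is to promote this to an identity at $d=p$. I would apply Lemma \ref{mcontradiction} with $\mathcal{D}_R=\{1,q,n\}$ (on which the eigenvalues already agree) and $\mathcal{D}_T=\{p\}$. Its condition (1) is vacuous for a singleton, and condition (2), namely $2\phi(n/p)+\phi(n)+\phi(n/q)+1>n$, simplifies to $pq(p-2)-2p(p-1)+1>0$, which holds for every $3\le p<q$ since the term $pq(p-2)$ dominates (the tightest case being $p=3$, where it reads $3q-11>0$). Hence $\lambda_p(S_1)=\lambda_p(S_2)$; evaluating this difference on the class $\gcd(k,n)=p$, where $\mathcal{R}_{p^2q}(k)=p$, $\mathcal{R}_{p^3}(k)=0$ and $\mathcal{R}_{p^2}(k)=-p$, gives $p(u-w)=0$, that is, $u=w$. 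Substituting $u=w$ into $(q-1)u+pv+w=0$ yields $qu+pv=0$; since $\gcd(p,q)=1$ and $|u|,|v|\le1<p<q$, this forces $u=v=0$, hence also $w=0$. Therefore $\Delta=\emptyset$ and $\mathcal{D}_{S_1}=\mathcal{D}_{S_2}$.

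The main obstacle is precisely the trio $\{p,q,pq\}$: all three Möbius values $\mu(n/d)$ vanish there, so the cheap identity $\lambda_1$ gives no information on them, and Lemma \ref{n/pnotintriangle} never touches them either; thus the elementary arguments only bound $\Delta$ without removing it. The crux is to manufacture two independent relations among $u,v,w$. A tempting shortcut --- applying the super-sequence Lemma \ref{supersequence1} to $(\phi(n/pq),\phi(n/q),\phi(n/p))$ --- breaks down exactly for twin primes $q=p+2$, where $\phi(n/p)=\phi(n/pq)+\phi(n/q)$ and the defining inequality degenerates; this is what leaves the spurious solution $(u,v,w)=(1,-1,-1)$ open. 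Routing instead through the automatic equality $\lambda_q(S_1)=\lambda_q(S_2)$ and the counting Lemma \ref{mcontradiction} for $\lambda_p$ dispatches all cases uniformly, and I would verify the condition-(2) inequality most carefully in the boundary case $p=3$.
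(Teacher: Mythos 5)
Your proof is correct, and after the common opening it takes a genuinely different route from the paper's. Both arguments coincide up to the reduction $\mathcal{D}_{S_1}\triangle\mathcal{D}_{S_2}\subseteq\{p,q,pq\}$ (Lemma \ref{n/pnotintriangle} for $p^{3},p^{2}q$; Corollary \ref{1equal} plus the M\"obius values $\mu(n/p^{2})=1$, $\mu(n/p^{3})=\mu(n/p^{2}q)=-1$ for $p^{2}$), but then they diverge. The paper runs an exhaustive case analysis on $|\mathcal{D}_{S_1}\triangle\mathcal{D}_{S_2}|\in\{1,2,3\}$ via Lemmas \ref{notincontradiction}, \ref{notincontradictionnew} and Corollary \ref{notincontradictioncor}; since $\phi(n/p)>\phi(n/q)>\phi(n/pq)$ are pairwise distinct this kills everything except the single subcase $\mathcal{D}_{S_1}\setminus\mathcal{D}_{S_2}=\{p\}$, $\mathcal{D}_{S_2}\setminus\mathcal{D}_{S_1}=\{q,pq\}$, which forces $q=p+2$ --- exactly the twin-prime degeneracy you identified as the obstruction to a pure super-sequence argument. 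The paper dispatches it by first proving $\lambda_{p}(S_1)=\lambda_{p^{2}}(S_1)$ from the structure of $\mathcal{D}_{S_1}\subseteq\{1,p,p^{2},p^{3},p^{2}q\}$, then applying Lemma \ref{mcontradiction} with $\mathcal{D}_{R}=\{1,n\}$ and the two-element $\mathcal{D}_{T}=\{p,p^{2}\}$, and finally contradicting both alternatives by strict Ramanujan-sum inequalities ($\lambda_p(S_1)>\lambda_p(S_2)$ yet $\lambda_{p^2}(S_1)<\lambda_{p^2}(S_2)$). You instead avoid all casework: your observation that $\lambda_{q}(S)=(q-1)\chi_{\mathcal{D}_S}(p^{3})-(q-1)\chi_{\mathcal{D}_S}(p^{2})-\chi_{\mathcal{D}_S}(p^{2}q)$ sees only divisors outside the symmetric difference gives $\lambda_q(S_1)=\lambda_q(S_2)$ for free, which lets you take the larger $\mathcal{D}_{R}=\{1,q,n\}$ and the singleton $\mathcal{D}_{T}=\{p\}$ in Lemma \ref{mcontradiction} --- I checked your condition-(2) inequality $pq(p-2)-2p(p-1)+1>0$, which indeed holds with the tight case $p=3$, $q=5$ giving $4>0$, and note that enlarging $\mathcal{D}_R$ by $q$ is genuinely necessary, since with $\mathcal{D}_R=\{1,n\}$ alone the inequality fails for $p=3$, $q\in\{5,7\}$. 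The resulting linear system $u=w$ and $(q-1)u+pv+w=0$, i.e. $qu+pv=0$ with $|u|,|v|\le 1<p<q$, forces $u=v=w=0$ uniformly. What each approach buys: yours is shorter, table-free, and handles the twin-prime case with no special pleading, at the cost of one sharp inequality verification; the paper's casework is more mechanical and reuses the same template (Lemmas \ref{notincontradiction}--\ref{notincontradictioncor}) deployed throughout Section \ref{sub2}, which is presumably why the authors preferred it.
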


\begin{proof}
By Lemma \ref{lambdan}, $\lambda_{n}(S_{1})=\lambda_{n}(S_{2})$. By (\ref{ramanujan}) and (\ref{lambdaphimu}), $\sum_{d\in\mathcal{D}_{S_{1}}}\phi(n/d)
=\sum_{d\in\mathcal{D}_{S_{2}}}\phi(n/d)$ and so
\begin{equation}\label{p3q1eq2}
\sum_{d\in\mathcal{D}_{S_{1}}\setminus\mathcal{D}_{S_{2}}}\phi(n/d)
=\sum_{d\in\mathcal{D}_{S_{2}}\setminus\mathcal{D}_{S_{1}}}\phi(n/d).
\end{equation}
By Lemma \ref{n/pnotintriangle}, $p^{2}q,p^{3}\notin\mathcal{D}_{S_{1}}\triangle\mathcal{D}_{S_{2}}$ and so $\mathcal{D}_{S_{1}}\triangle\mathcal{D}_{S_{2}}\subseteq\{1,q,p,pq,p^{2}\}$. Since $1\in\mathcal{D}_{S_{1}}\cap\mathcal{D}_{S_{2}}$ and so $1\notin\mathcal{D}_{S_{1}}\triangle\mathcal{D}_{S_{2}}$, we have $\mathcal{D}_{S_{1}}\triangle\mathcal{D}_{S_{2}}\subseteq\{q,p,pq,p^{2}\}$. Besides, by Corollary \ref{1equal}, $\lambda_{1}(S_{1})=\lambda_{1}(S_{2})$. Hence, by $(\ref{ramanujan})$ and $(\ref{lambdaphimu})$, $\sum_{d\in\mathcal{D}_{S_{1}}}\mu(n/d)=\sum_{d\in\mathcal{D}_{S_{2}}}\mu(n/d)$ and so
$$
\sum_{d\in\mathcal{D}_{S_{1}}\setminus\mathcal{D}_{S_{2}}}\mu(n/d)
=\sum_{d\in\mathcal{D}_{S_{2}}\setminus\mathcal{D}_{S_{1}}}\mu(n/d).
$$
Note that $\mu(n/p^{2})=1$ and that $\forall d\in\{q,p,pq\}$, $\mu(n/d)=0$. Taking $\{q,p,pq,p^{2}\}$, $\mu(n/\cdot)$, $\{p^{2}\}$, $\mathcal{D}_{S_{1}}\setminus\mathcal{D}_{S_{2}}$, $\mathcal{D}_{S_{2}}\setminus\mathcal{D}_{S_{1}}$, as $A$, $f$, $B$, $A_{1}$, $A_{2}$, in Lemma \ref{notincontradiction}, we have $p^{2}\notin\mathcal{D}_{S_{1}}\triangle\mathcal{D}_{S_{2}}$ and so
$$\mathcal{D}_{S_{1}}\triangle\mathcal{D}_{S_{2}}\subseteq\{q,p,pq\}.$$
To prove $\mathcal{D}_{S_{1}}=\mathcal{D}_{S_{2}}$, we rule out the following 3 cases.
\begin{itemize}
\item Case 1: $|\mathcal{D}_{S_{1}}\triangle\mathcal{D}_{S_{2}}|=3$\\
Then $\mathcal{D}_{S_{1}}\triangle\mathcal{D}_{S_{2}}=\{q,p,pq\}$. Note that $\phi(n/p)>\phi(n/q)>\phi(n/pq)$ and recall (\ref{p3q1eq2}). Similar to Case 1 in the proof of Lemma \ref{2331}, replacing $2,2\cdot3,2^{2}$ by $p,q,pq$, without loss of generality, we only need to rule out the subcase where $\mathcal{D}_{S_{1}}\setminus\mathcal{D}_{S_{2}}=\{p\}$ and $\mathcal{D}_{S_{2}}\setminus\mathcal{D}_{S_{1}}=\{q,pq\}$. In this subcase, (\ref{p3q1eq2}) implies that
\begin{equation}\label{p3q1eq1}
q=p+2.
\end{equation}
Recalling that $1\in\mathcal{D}_{S_{1}}\cap\mathcal{D}_{S_{2}}$, we have $\{1,p\}\subseteq\mathcal{D}_{S_{1}}\subseteq\{1,p,p^{2},p^{3},p^{2}q\}$. Therefore,
\begin{table}[!h]\centering
{\tiny
\caption{Some Ramanujan sums when $n=p^{3}q$\label{ramanujanp3q}}
\begin{tabular}{|c|c|c|c|c|c|c|c|}
\hline
$d$ takes (values) & $1$ & $p$ & $p^{2}$ & $p^{3}$ & $q$ & $pq$ & $p^{2}q$\\
\hline
$n/d$ equals to &$p^{3}q$ & $p^{2}q$ & $pq$ &$q$ & $p^{3}$ & $p^{2}$& $p$\\
\hline
$\mathcal{R}_{n/d}(p)$ equals to &$0$ & $p$ & $-(p-1)$ & $-1$ & $0$ & $-p$ &$p-1$\\
\hline
$\mathcal{R}_{n/d}(p^{2})$ equals to &$p^{2}$ & $-p(p-1)$ &$-(p-1)$ &$-1$ &$-p^{2}$ &$p(p-1)$ &$p-1$\\
\hline
\end{tabular}
}
\end{table}
\begin{align*}
\lambda_{p}(S_{1})
&=\mathcal{R}_{n/1}(p)+\mathcal{R}_{n/p}(p)
+\sum_{d\in\mathcal{D}_{S_{1}}\cap\{p^{2},p^{3},p^{2}q\}}\mathcal{R}_{n/d}(p)\tag{by (\ref{ramanujan})}
\\&=\mathcal{R}_{n/1}(p^{2})+\mathcal{R}_{n/p}(p^{2})
+\sum_{d\in\mathcal{D}_{S_{1}}\cap\{p^{2},p^{3},p^{2}q\}}\mathcal{R}_{n/d}(p^{2})\tag{by Table \ref{ramanujanp3q}}
\\&=\lambda_{p^{2}}(S_{1})\tag{by (\ref{ramanujan})}.
\end{align*}
Since $p\geq3$, we have $p^{3}+p^{2}-p-1>p^{3}+p$, which by (\ref{p3q1eq1}), implies
$$
\sum_{d\in\{p,p^{2}\}}\phi(n/d)>n-\sum_{d\in\{1,p^{3}q\}}\phi(n/d)-\sum_{d\in\{p,p^{2}\}}\phi(n/d).
$$
Taking $\{1,p^{3}q\}$, $\{p,p^{2}\}$, as $\mathcal{D}_{R}$, $\mathcal{D}_{T}$, in Lemma \ref{mcontradiction}, we have either
$$\lambda_{p}(S_{1})=\lambda_{p}(S_{2})\qquad\mathrm{or}\qquad \lambda_{p^{2}}(S_{1})=\lambda_{p^{2}}(S_{2}).$$
Note that $\mathcal{D}_{S_{1}}=(\mathcal{D}_{S_{1}}\setminus\mathcal{D}_{S_{2}})\cup(\mathcal{D}_{S_{1}}\cap\mathcal{D}_{S_{2}})$ and that $\mathcal{D}_{S_{2}}=(\mathcal{D}_{S_{2}}\setminus\mathcal{D}_{S_{1}})\cup(\mathcal{D}_{S_{1}}\cap\mathcal{D}_{S_{2}})$. We have
\begin{align*}
\lambda_{p}(S_{1})&=\mathcal{R}_{n/p}(p)
+\sum_{d\in\mathcal{D}_{S_{1}}\cap\mathcal{D}_{S_{2}}}\mathcal{R}_{n/d}(p)\tag{by (\ref{ramanujan})}
\\&>\mathcal{R}_{n/q}(p)+\mathcal{R}_{n/pq}(p)
+\sum_{d\in\mathcal{D}_{S_{1}}\cap\mathcal{D}_{S_{2}}}\mathcal{R}_{n/d}(p)\tag{by Table \ref{ramanujanp3q}}
\\&=\lambda_{p}(S_{2})\tag{by (\ref{ramanujan})}
\end{align*}
and
\begin{align*}
\lambda_{p^{2}}(S_{1})&=\mathcal{R}_{n/p}(p^{2})
+\sum_{d\in\mathcal{D}_{S_{1}}\cap\mathcal{D}_{S_{2}}}\mathcal{R}_{n/d}(p^{2})\tag{by (\ref{ramanujan})}
\\&<\mathcal{R}_{n/q}(p^{2})+\mathcal{R}_{n/pq}(p^{2})
+\sum_{d\in\mathcal{D}_{S_{1}}\cap\mathcal{D}_{S_{2}}}\mathcal{R}_{n/d}(p^{2})\tag{by Table \ref{ramanujanp3q}}
\\&=\lambda_{p^{2}}(S_{2})\tag{by (\ref{ramanujan})},
\end{align*}
which is a contradiction.

\item Case 2: $|\mathcal{D}_{S_{1}}\triangle\mathcal{D}_{S_{2}}|=2$\\
Then $\mathcal{D}_{S_{1}}\triangle\mathcal{D}_{S_{2}}=\{p,pq\}$, $\{p,p^{2}\}$, or $\{pq,p^{2}\}$. Recall (\ref{p3q1eq2}). Note that $\phi(n/p)>\phi(n/q)>\phi(n/pq)$. Taking $\mathcal{D}_{S_{1}}\triangle\mathcal{D}_{S_{2}}$, $\phi(n/\cdot)$, $\mathcal{D}_{S_{1}}\setminus\mathcal{D}_{S_{2}}$, $\mathcal{D}_{S_{2}}\setminus\mathcal{D}_{S_{1}}$, as $A$, $f$, $A_{1}$, $A_{2}$, in Corollary \ref{notincontradictioncor}, we have $\mathcal{D}_{S_{1}}\triangle\mathcal{D}_{S_{2}}=\emptyset$, which is a contradiction.
\item Case 3: $|\mathcal{D}_{S_{1}}\triangle\mathcal{D}_{S_{2}}|=1$\\
Recall (\ref{p3q1eq2}). Taking $\mathcal{D}_{S_{1}}\triangle\mathcal{D}_{S_{2}}$, $\phi(n/\cdot)$, $\mathcal{D}_{S_{1}}\setminus\mathcal{D}_{S_{2}}$, $\mathcal{D}_{S_{2}}\setminus\mathcal{D}_{S_{1}}$, as $A$, $f$, $A_{1}$, $A_{2}$, in Lemma \ref{notincontradictionnew}, we have $\mathcal{D}_{S_{1}}\triangle\mathcal{D}_{S_{2}}=\emptyset$, which is a contradiction.

\end{itemize}
This completes the proof.\qed
\end{proof}
\begin{theorem}\label{p3q}
Set $n=p^{3}q$ with primes $3\leq p<q$. Let $\mathcal{D}_{S_{1}}$ and $\mathcal{D}_{S_{2}}$ be two subsets of $\mathcal{D}_{[n]}\setminus\{n\}$. Then $\spec(\ICG(n,\mathcal{D}_{S_{1}}))
=\spec(\ICG(n,\mathcal{D}_{S_{2}}))$ implies $\mathcal{D}_{S_{1}}=\mathcal{D}_{S_{2}}$.

\end{theorem}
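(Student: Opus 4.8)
The plan is to deduce Theorem \ref{p3q} from Lemma \ref{p3q1} by removing the extra hypothesis $1\in\mathcal{D}_{S_1}\cap\mathcal{D}_{S_2}$, using the complementation machinery of Corollary \ref{regularisocor} together with the fact that $\phi(n)\ge n/2$ for $n=p^3q$ in this parameter range.

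First I would record the numerical fact $\phi(n)\ge n/2$. Since $\phi(n)/n=(1-1/p)(1-1/q)$ and the hypothesis $3\le p<q$ forces $p\ge3$ and $q\ge5$, this ratio is at least $(1-\tfrac13)(1-\tfrac15)=\tfrac{8}{15}>\tfrac12$; hence $\phi(n)\ge n/2$ holds for every admissible $n=p^3q$. With this in hand, Lemma \ref{1notintriangle} applies to the isospectral pair $\ICG(n,\mathcal{D}_{S_1})$, $\ICG(n,\mathcal{D}_{S_2})$ and yields $1\notin\mathcal{D}_{S_1}\triangle\mathcal{D}_{S_2}$; that is, the divisor $1$ lies either in both of $\mathcal{D}_{S_1},\mathcal{D}_{S_2}$ or in neither.

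Next I would match this against Corollary \ref{regularisocor} taken with $d_0=1$. Its statement (a) --- that isospectrality forces $\mathcal{D}_{S_1}=\mathcal{D}_{S_2}$ whenever $1\in\mathcal{D}_{S_1}\cap\mathcal{D}_{S_2}$ --- is exactly Lemma \ref{p3q1}. The corollary therefore delivers its statement (b): isospectrality forces $\mathcal{D}_{S_1}=\mathcal{D}_{S_2}$ whenever $1\notin\mathcal{D}_{S_1}\triangle\mathcal{D}_{S_2}$. Since the previous step guarantees precisely the hypothesis $1\notin\mathcal{D}_{S_1}\triangle\mathcal{D}_{S_2}$, we conclude $\mathcal{D}_{S_1}=\mathcal{D}_{S_2}$, which is the assertion of the theorem.

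The main obstacle is not located in this theorem: the substantive combinatorial work has already been carried out in Lemma \ref{p3q1}, where the symmetric difference $\mathcal{D}_{S_1}\triangle\mathcal{D}_{S_2}$ was pared down to a subset of $\{q,p,pq\}$ and the only surviving configuration ($q=p+2$) was eliminated by directly comparing $\lambda_p$ and $\lambda_{p^2}$ through the Ramanujan-sum table. What remains here is purely a bookkeeping reduction, and the only points requiring care are verifying $\phi(n)\ge n/2$ uniformly (so that Lemma \ref{1notintriangle} and, behind the scenes, Corollary \ref{1equal} are available) and checking that the hypotheses of Corollary \ref{regularisocor} are met, so that the complementation argument is legitimately invoked.
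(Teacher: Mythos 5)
Your proposal is correct and follows essentially the same route as the paper: verify $\phi(n)/n=\frac{(p-1)(q-1)}{pq}\geq\frac{8}{15}>\frac{1}{2}$, invoke Lemma \ref{1notintriangle} to obtain $1\notin\mathcal{D}_{S_{1}}\triangle\mathcal{D}_{S_{2}}$, and then apply Corollary \ref{regularisocor} with $d_{0}=1$, its hypothesis (a) being exactly Lemma \ref{p3q1}. You even silently correct the paper's typographical slip of citing ``Lemma \ref{p3q}'' where Lemma \ref{p3q1} is meant.
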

\begin{proof}
Since $\phi(n)/n=\frac{(p-1)(q-1)}{pq}\geq\frac{2\cdot4}{3\cdot5}>\frac{1}{2}$, we have $\phi(n)>n/2$. By Lemma \ref{1notintriangle}, $1\notin\mathcal{D}_{S_{1}}\triangle\mathcal{D}_{S_{2}}$. By Lemma \ref{p3q} and Corollary \ref{regularisocor}, we obtain the result.\qed

\end{proof}

\subsubsection{Proof of (d) of Theorem \ref{main}}
\begin{lemma}\label{22321}
Set $n=2^{2}3^{2}$. Let $\mathcal{D}_{S_{1}}$ and $\mathcal{D}_{S_{2}}$ be two subsets of $\mathcal{D}_{[n]}\setminus\{n\}$ such that $2^{2}3\in\mathcal{D}_{S_{1}}\cap\mathcal{D}_{S_{2}}$. Then $\spec(\ICG(n,\mathcal{D}_{S_{1}}))
=\spec(\ICG(n,\mathcal{D}_{S_{2}}))$ implies $\mathcal{D}_{S_{1}}=\mathcal{D}_{S_{2}}$.
\end{lemma}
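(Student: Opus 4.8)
The plan is to pin the symmetric difference $\mathcal{D}_{S_1}\triangle\mathcal{D}_{S_2}$ down to a two-element set and then kill the one surviving configuration by hand. Here $n=2^2 3^2=36$, so $\mathcal{D}_{[n]}\setminus\{n\}=\{1,2,3,4,6,9,12,18\}$, and the corresponding values $\phi(n/d)$ are $12,6,4,6,2,2,2,1$; note in particular that $\phi(n/2)=\phi(n/4)=6$ coincide, which is what will make the argument delicate. As in Lemmas \ref{2331} and \ref{p3q1}, Lemma \ref{lambdan} together with (\ref{ramanujan}) and (\ref{lambdaphimu}) gives the balance
\[
\sum_{d\in\mathcal{D}_{S_1}\setminus\mathcal{D}_{S_2}}\phi(n/d)=\sum_{d\in\mathcal{D}_{S_2}\setminus\mathcal{D}_{S_1}}\phi(n/d).
\]

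First I would shrink the symmetric difference. Lemma \ref{n/2notintriangle} removes $18=n/2$, and Lemma \ref{n/pnotintriangle} (with $p=3$, consistent with the hypothesis $2^2 3=n/3\in\mathcal{D}_{S_1}\cap\mathcal{D}_{S_2}$) removes $12=n/3$. For the odd divisors, Corollary \ref{n/2cor}(a) gives $\sum_{d\in\{1,3,9\}}\chi_{\mathcal{D}_{S_1}}(d)\phi(n/d)=\sum_{d\in\{1,3,9\}}\chi_{\mathcal{D}_{S_2}}(d)\phi(n/d)$; since $(\phi(n/9),\phi(n/3),\phi(n/1))=(2,4,12)$ is a $1$-super sequence, Lemma \ref{supersequence1} forces $\mathcal{D}_{S_1}\cap\{1,3,9\}=\mathcal{D}_{S_2}\cap\{1,3,9\}$. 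Hence $\mathcal{D}_{S_1}\triangle\mathcal{D}_{S_2}\subseteq\{2,4,6\}$, where the relevant values are $\phi(n/2)=\phi(n/4)=6$ and $\phi(n/6)=2$.

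Next I would run the short case analysis against the balance equation. A singleton symmetric difference is impossible by Lemma \ref{notincontradictionnew}. The cases $\{2,6\}$ and $\{4,6\}$ are excluded by Corollary \ref{notincontradictioncor}, since $\phi(n/6)=2\neq6=\phi(n/2)=\phi(n/4)$. The full set $\{2,4,6\}$ is excluded by parity: the balance would split the total $6+6+2=14$ into two equal halves of $7$, impossible because every value here is even. This leaves only $\mathcal{D}_{S_1}\triangle\mathcal{D}_{S_2}=\{2,4\}$, and Corollary \ref{notincontradictioncor}(a) (applicable precisely because $\phi(n/2)=\phi(n/4)$) lets me assume $2\in\mathcal{D}_{S_1}$, $4\in\mathcal{D}_{S_2}$, with $\mathcal{D}_{S_1}$ and $\mathcal{D}_{S_2}$ agreeing elsewhere.

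The hard part will be this last configuration, where the balance equation is powerless because $2$ and $4$ contribute equal $\phi(n/d)$. Here I would go back to the eigenvalues themselves: since the two divisor sets differ only by $2$ versus $4$, (\ref{ramanujan}) gives $\lambda_k(S_1)-\lambda_k(S_2)=\mathcal{R}_{n/2}(k)-\mathcal{R}_{n/4}(k)=\mathcal{R}_{18}(k)-\mathcal{R}_{9}(k)$ for every $k$, which by (\ref{lambdaphimu}) and Lemma \ref{sameorbiteigen} depends only on $\gcd(k,n)$. A short computation with (\ref{lambdaphimu}) shows this difference vanishes for all $\gcd(k,n)\notin\{3,9\}$, equals $+6$ when $\gcd(k,n)=3$, and equals $-12$ when $\gcd(k,n)=9$. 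By Lemma \ref{sameorbiteigen} the spectrum splits into classes indexed by $d=\gcd(k,n)$, the class of $d$ contributing $\lambda_d(S)$ with multiplicity $\phi(n/d)$. Writing $a=\lambda_{3}(S_1)$ and $b=\lambda_{9}(S_1)$, the two spectra then share every eigenvalue from classes $d\notin\{3,9\}$, while the remaining parts are the multisets $\{a^{(4)},b^{(2)}\}$ for $S_1$ and $\{(a-6)^{(4)},(b+12)^{(2)}\}$ for $S_2$, with multiplicities $\phi(n/3)=4$ and $\phi(n/9)=2$. Cancelling the common part, isospectrality would force these two multisets to coincide; but the value $a$ has multiplicity at least $4$ on the left and at most $2$ on the right (it can only be matched by $b+12$, of multiplicity $2$, never by $a-6$). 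This contradiction gives $\mathcal{D}_{S_1}=\mathcal{D}_{S_2}$. As an alternative to this multiplicity count, one could, as in Lemma \ref{2331}, simply tabulate the two spectra over all admissible $\mathcal{D}_{S_1}\cap\mathcal{D}_{S_2}$, though that forces a $32$-row enumeration that the argument above avoids.
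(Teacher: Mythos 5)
Your proposal is correct, and its reduction to $\mathcal{D}_{S_{1}}\triangle\mathcal{D}_{S_{2}}\subseteq\{2,2\cdot3,2^{2}\}$ is exactly the paper's (Lemmas \ref{n/2notintriangle} and \ref{n/pnotintriangle}, then Corollary \ref{n/2cor}(a) with Lemma \ref{supersequence1} on the odd divisors), but you diverge in two places, both to your advantage. For $|\mathcal{D}_{S_{1}}\triangle\mathcal{D}_{S_{2}}|=3$ the paper applies Lemma \ref{notincontradiction} three times to obtain (\ref{22321eq2})--(\ref{22321eq4}), whereas your parity observation --- the balance equation would split $6+6+2=14$ into two equal sums of $7$, impossible since every $\phi(n/d)$ involved is even --- is a one-line replacement. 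More substantially, for the surviving configuration $\mathcal{D}_{S_{1}}\setminus\mathcal{D}_{S_{2}}=\{2\}$, $\mathcal{D}_{S_{2}}\setminus\mathcal{D}_{S_{1}}=\{2^{2}\}$ the paper resorts to brute force: Table \ref{table2232} enumerates all $2^{5}=32$ admissible sets $\mathcal{D}_{S_{1}}\cap\mathcal{D}_{S_{2}}$ (the hypothesis $2^{2}3\in\mathcal{D}_{S_{1}}\cap\mathcal{D}_{S_{2}}$ serving only to cap the table at $32$ rows) and exhibits a spectral discrepancy row by row. Your argument instead computes $\lambda_{k}(S_{1})-\lambda_{k}(S_{2})=\mathcal{R}_{18}(k)-\mathcal{R}_{9}(k)$, which by (\ref{lambdaphimu}) equals $6$ on $G_{36}(3)$, $-12$ on $G_{36}(9)$, and $0$ on the other seven gcd classes (I checked all nine), so after cancelling the common sub-multiset, isospectrality would force the multiset identity $\{a^{(4)},b^{(2)}\}=\{(a-6)^{(4)},(b+12)^{(2)}\}$ with multiplicities $\phi(n/3)=4$ and $\phi(n/9)=2$; this fails because $a$ occurs at least $4$ times on the left but at most $2$ times on the right (only $b+12$ can equal $a$), exactly as you say. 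This buys a short, fully hand-checkable proof that never uses the hypothesis $2^{2}3\in\mathcal{D}_{S_{1}}\cap\mathcal{D}_{S_{2}}$ in the final step, and it is the natural $q=3$ analogue of the counting argument the paper itself deploys in Theorem \ref{22q2} for $n=2^{2}q^{2}$ with $q>3$ --- a technique that degenerates at $q=3$ because there $\phi(n/q^{2})=2\equiv 0 \pmod{q-1}$, which is presumably why the paper fell back on the table; your multiplicity count does not suffer from that degeneracy. The paper's table requires no cleverness and is self-verifying, but your route is shorter and more robust.
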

\begin{proof}
By Lemma \ref{n/2notintriangle} and \ref{n/pnotintriangle}, $2\cdot3^{2},2^{2}3\notin\mathcal{D}_{S_{1}}\triangle\mathcal{D}_{S_{2}}$ and so $\mathcal{D}_{S_{1}}\triangle\mathcal{D}_{S_{2}}\subseteq\{1,3,3^{2},2,2\cdot3,2^{2}\}$. By Lemma \ref{lambdan}, $\lambda_{n}(S_{1})=\lambda_{n}(S_{2})$. By (\ref{ramanujan}) and (\ref{lambdaphimu}), we have $\sum_{d\in\mathcal{D}_{S_{1}}}\phi(n/d)
=\sum_{d\in\mathcal{D}_{S_{2}}}\phi(n/d)$ and so
\begin{equation}\label{22321eq1}
\sum_{d\in\mathcal{D}_{S_{1}}\setminus\mathcal{D}_{S_{2}}}\phi(n/d)
=\sum_{d\in\mathcal{D}_{S_{2}}\setminus\mathcal{D}_{S_{1}}}\phi(n/d).
\end{equation}

We first use the method in proof of Lemma \ref{23q} to prove that $\mathcal{D}_{S_{1}}\cap\{1,3,3^{2}\}=\mathcal{D}_{S_{2}}\cap\{1,3,3^{2}\}$. By (a) of Corollary \ref{n/2cor},
$$
\sum_{d\in\{1,3,3^{2}\}}\chi_{\mathcal{D}_{S_{1}}}(d)\phi(n/d)
=\sum_{d\in\{1,3,3^{2}\}}\chi_{\mathcal{D}_{S_{2}}}(d)\phi(n/d),
$$
which is the condition (2) of Lemma \ref{supersequence1}. Set $(x_{0},x_{1},x_{2})=(\phi(n/3^{2}),\phi(n/3),\phi(n/1))$, which is a $1$-super sequence. Set $(a_{0},a_{1},a_{2})
=(\chi_{\mathcal{D}_{S_{1}}}(3^{2}),\chi_{\mathcal{D}_{S_{1}}}(3),\chi_{\mathcal{D}_{S_{1}}}(1))$ and correspondingly, $(b_{0},b_{1},b_{2})
=(\chi_{\mathcal{D}_{S_{2}}}(3^{2}),\chi_{\mathcal{D}_{S_{2}}}(3),\chi_{\mathcal{D}_{S_{2}}}(1))$, which clearly satisfies the condition (1) of Lemma \ref{supersequence1}. By Lemma \ref{supersequence1}, $\mathcal{D}_{S_{1}}\cap\{1,3,3^{2}\}=\mathcal{D}_{S_{2}}\cap\{1,3,3^{2}\}$.
So
\begin{equation}\label{22321eq5}
\mathcal{D}_{S_{1}}\triangle\mathcal{D}_{S_{2}}\subseteq\{2,2\cdot3,2^{2}\}.
\end{equation}
To prove $\mathcal{D}_{S_{1}}=\mathcal{D}_{S_{2}}$, we rule out the following 3 cases.
\begin{itemize}
\item Case 1: $|\mathcal{D}_{S_{1}}\triangle\mathcal{D}_{S_{2}}|=3$\\
Then $\mathcal{D}_{S_{1}}\triangle\mathcal{D}_{S_{2}}=\{2,2\cdot3,2^{2}\}$. Note that $\phi(n/2)=\phi(n/2^{2})>\phi(n/(2\cdot3))$ and recall (\ref{22321eq1}). Taking $\{2,2\cdot3,2^{2}\}$, $\phi(n/\cdot)$, $\{2,2\cdot3\}$, $\mathcal{D}_{S_{1}}\setminus\mathcal{D}_{S_{2}}$, $\mathcal{D}_{S_{2}}\setminus\mathcal{D}_{S_{1}}$, as $A$, $f$, $B$, $A_{1}$, $A_{2}$, in Lemma \ref{notincontradiction}, we have
\begin{equation}\label{22321eq2}
\{2,2\cdot3\}\nsubseteq\mathcal{D}_{S_{1}}\setminus\mathcal{D}_{S_{2}}\quad\text{and}\quad
\{2,2\cdot3\}\nsubseteq\mathcal{D}_{S_{2}}\setminus\mathcal{D}_{S_{1}}.
\end{equation}
Taking $\{2,2\cdot3,2^{2}\}$, $\phi(n/\cdot)$, $\{2\cdot3,2^{2}\}$, $\mathcal{D}_{S_{1}}\setminus\mathcal{D}_{S_{2}}$, $\mathcal{D}_{S_{2}}\setminus\mathcal{D}_{S_{1}}$, as $A$, $f$, $B$, $A_{1}$, $A_{2}$, in Lemma \ref{notincontradiction}, we have
\begin{equation}\label{22321eq3}
\{2\cdot3,2^{2}\}\nsubseteq\mathcal{D}_{S_{1}}\setminus\mathcal{D}_{S_{2}}\quad\text{and}\quad
\{2\cdot3,2^{2}\}\nsubseteq\mathcal{D}_{S_{2}}\setminus\mathcal{D}_{S_{1}}.
\end{equation}
Taking $\{2,2\cdot3,2^{2}\}$, $\phi(n/\cdot)$, $\{2,2^{2}\}$, $\mathcal{D}_{S_{1}}\setminus\mathcal{D}_{S_{2}}$, $\mathcal{D}_{S_{2}}\setminus\mathcal{D}_{S_{1}}$, as $A$, $f$, $B$, $A_{1}$, $A_{2}$, in Lemma \ref{notincontradiction}, we have
\begin{equation}\label{22321eq4}
\{2,2^{2}\}\nsubseteq\mathcal{D}_{S_{1}}\setminus\mathcal{D}_{S_{2}}\quad\text{and}\quad \{2,2^{2}\}\nsubseteq\mathcal{D}_{S_{2}}\setminus\mathcal{D}_{S_{1}}.
\end{equation}
(\ref{22321eq2}), (\ref{22321eq3}) and (\ref{22321eq4}) imply $\mathcal{D}_{S_{1}}\triangle\mathcal{D}_{S_{2}}\neq\{2,2\cdot3,2^{2}\}$, which is a contradiction.
\end{itemize}
\begin{table}[!h]
    \centering
{\tiny
    \caption{$n=2^{2}3^{2}$, $\mathcal{D}_{S_{1}}\setminus\mathcal{D}_{S_{2}}=\{2\}$, $\mathcal{D}_{S_{2}}\setminus\mathcal{D}_{S_{1}}=\{2^{2}\}$ and $2^{2}3\in\mathcal{D}_{S_{1}}\cap\mathcal{D}_{S_{2}}$\label{table2232}}
    \begin{tabular}{|c|c|c|}
    \hline
    $\mathcal{D}_{S_{1}}\cap\mathcal{D}_{S_{2}}$ &   $\spec(\ICG(n,\mathcal{D}_{S_{1}}))$    &
    $\spec(\ICG(n,\mathcal{D}_{S_{2}}))$  \\\hline

$\{2^{2}3\}$     &

$\left(
\begin{array}{cccc}
8 & 5 & -1 & -4\\
2 & 4 & 28 & 2
\end{array}
\right)$

    &

$\left(
\begin{array}{cc}
8 & -1\\
4 & 32
\end{array}
\right)$\\\hline

$\{2\cdot3^{2},2^{2}3\}$     &

$\left(
\begin{array}{ccccc}
9 & 4 & 0 & -2 & -5\\
2 & 4 & 16 & 12 & 2
\end{array}
\right)$

    &

$\left(
\begin{array}{cccc}
9 & 7 & 0 & -2\\
2 & 2 & 16 & 16
\end{array}
\right)$\\\hline

$\{2\cdot3,2^{2}3\}$     &

$\left(
\begin{array}{cccccc}
10 & 3 & 1 & 0 & -2 & -6\\
2 & 4 & 4 & 12 & 12 & 2
\end{array}
\right)$

    &

$\left(
\begin{array}{cccccc}
10 & 6 & 1 & 0 & -2 & -3\\
2 & 2 & 4 & 12 & 12 & 4
\end{array}
\right)$\\\hline

$\{2\cdot3,2\cdot3^{2},2^{2}3\}$     &

$\left(
\begin{array}{cccc}
11 & 2 & -1 & -7\\
2 & 8 & 24 & 2
\end{array}
\right)$

    &

$\left(
\begin{array}{ccccc}
11 & 5 & 2 & -1 & -4\\
2 & 2 & 4 & 24 & 4
\end{array}
\right)$\\\hline

$\{3^{2},2^{2}3\}$     &

$\left(
\begin{array}{ccccccc}
10 & 6 & 5 & 1 & -1 & -3 & -4\\
1 & 1 & 4 & 8 & 12 & 8 & 2
\end{array}
\right)$

    &

$\left(
\begin{array}{cccccc}
10 & 8 & 6 & 1 & -1 & -3\\
1 & 2 & 1 & 8 & 16 & 8
\end{array}
\right)$\\\hline

$\{3^{2},2\cdot3^{2},2^{2}3\}$     &

$\left(
\begin{array}{cccccc}
11 & 7 & 4 & 2 & -2 & -5\\
1 & 1 & 4 & 8 & 20 & 2
\end{array}
\right)$

    &

$\left(
\begin{array}{cccc}
11 & 7 & 2& -2\\
1 & 3 & 8 & 24
\end{array}
\right)$\\\hline

$\{3^{2},2\cdot3,2^{2}3\}$     &

$\left(
\begin{array}{ccccccc}
12 & 8 & 3 & 0 & -1 & -4 & -6\\
1 & 1 & 6 & 18 & 2 & 6 & 2
\end{array}
\right)$

    &

$\left(
\begin{array}{cccccccc}
12 & 8 & 6 & 3 & 0 & -1 & -3 & -4\\
1 & 1 & 2 & 2 & 18 & 2 & 4 & 6
\end{array}
\right)$\\\hline

$\{3^{2},2\cdot3,2\cdot3^{2},2^{2}3\}$     &

$\left(
\begin{array}{ccccccccc}
13 & 9 & 4 & 2 & 1 & 0 & -1 & -3 & -7\\
1 & 1 & 2 & 4 & 6 & 2 & 12 & 6 & 2
\end{array}
\right)$

    &

$\left(
\begin{array}{ccccccccc}
13 & 9 & 5 & 4 & 1 & 0 & -1 & -3 & -4\\
1 & 1 & 2 & 2 & 6 & 2 & 12 & 6 & 4
\end{array}
\right)$\\\hline

$\{3,2^{2}3\}$     &

$\left(
\begin{array}{ccccccccc}
12 & 5 & 4 & 3 & 1 & -1 & -3 & -4 & -5\\
1 & 4 & 1 & 2 & 6 & 12 & 6 & 2 & 2
\end{array}
\right)$

    &

$\left(
\begin{array}{cccccccc}
12 & 8 & 4 & 3 & 1 & -1 & -3 & -5\\
1& 2 & 1 & 2 & 6 & 16 & 6 & 2
\end{array}
\right)$\\\hline

$\{3,2\cdot3^{2},2^{2}3\}$     &

$\left(
\begin{array}{ccccccc}
13 & 5 & 4 & 2 & -2 & -4 & -5\\
1 & 1 & 6 & 6 & 18 & 2 & 2
\end{array}
\right)$

    &

$\left(
\begin{array}{ccccccc}
13 & 7 & 5 & 4 & 2 & -2 & -4\\
1 & 2 & 1 & 2 & 6 & 22 & 2
\end{array}
\right)$\\\hline

$\{3,2\cdot3,2^{2}3\}$     &

$\left(
\begin{array}{cccccccc}
14 & 6 & 5 & 3 & 0 & -3 & -4 & -6\\
1 & 1 & 2 & 4 & 18 & 2 & 6 & 2
\end{array}
\right)$

    &

$\left(
\begin{array}{cccccc}
14 & 6 & 5 & 0 & -3 & -4\\
1 & 3 & 2 & 18 & 6 & 6
\end{array}
\right)$\\\hline

$\{3,2\cdot3,2\cdot3^{2},2^{2}3\}$     &

$\left(
\begin{array}{ccccccccc}
15 & 7 & 6 & 2 & 1 & -1 & -2 & -3 & -7\\
1 & 1 & 2 & 4 & 6 & 12 & 2 & 6 & 2
\end{array}
\right)$

    &

$\left(
\begin{array}{ccccccccc}
15 & 7 & 6 & 5 & 1 & -1 & -2 & -3 & -4\\
1 & 1 & 2 & 2 & 6 & 12 & 2 & 6 & 4
\end{array}
\right)$\\\hline

$\{3,3^{2},2^{2}3\}$     &

$\left(
\begin{array}{cccccc}
14 & 5 & 2 & -1 & -4 & -7\\
1 & 6 & 1 & 24 & 2 & 2
\end{array}
\right)$

    &

$\left(
\begin{array}{cccccc}
14 & 8 & 5 & 2 & -1 & -7\\
1 & 2 & 2 & 1 & 28 & 2
\end{array}
\right)$\\\hline

$\{3,3^{2},2\cdot3^{2},2^{2}3\}$     &

$\left(
\begin{array}{cccccccc}
15 & 6 & 4 & 3 & 0 & -2 & -5 & -6\\
1 & 2 & 4 & 1 & 12 & 12 & 2 & 2
\end{array}
\right)$

    &

$\left(
\begin{array}{ccccccc}
15 & 7 & 6 & 3 & 0 & -2 & -6\\
1 & 2 & 2 & 1 & 12 & 16 & 2
\end{array}
\right)$\\\hline

$\{3,3^{2},2\cdot3,2^{2}3\}$     &

$\left(
\begin{array}{cccccccc}
16 & 7 & 4 & 3 & 0 & -2 & -5 & -6\\
1 & 2 & 1 & 4 & 12 & 12 & 2 & 2
\end{array}
\right)$

    &

$\left(
\begin{array}{cccccccc}
16 & 7 & 6 & 4 & 0 & -2 & -3 & -5\\
1 & 2 & 2 & 1 & 12 & 12 & 4 & 2
\end{array}
\right)$\\\hline

$\{3,3^{2},2\cdot3,2\cdot3^{2},2^{2}3\}$     &

$\left(
\begin{array}{ccccccc}
17 & 8 & 5 & 2 & -1 & -4 & -7\\
1 & 2 & 1 & 4 & 24 & 2 & 2
\end{array}
\right)$

    &

$\left(
\begin{array}{ccccc}
17 & 8 & 5 & -1 & -4\\
1 & 2 & 3& 24 & 6
\end{array}
\right)$\\\hline

$\{1,2^{2}3\}$     &

$\left(
\begin{array}{ccccc}
20 & 5 & -1 & -4 & -7\\
1 & 6 & 24 & 3 & 2
\end{array}
\right)$

    &

$\left(
\begin{array}{cccccc}
20 & 8 & 5 & -1 & -4 & -7\\
1 & 2 & 2 & 28 & 1 & 2
\end{array}
\right)$\\\hline

$\{1,2\cdot3^{2},2^{2}3\}$     &

$\left(
\begin{array}{cccccccc}
21& 6 & 4 & 0 & -2 & -3 & -5 & -6\\
1 & 2 & 4 & 12 & 12 & 1 & 2 & 2
\end{array}
\right)$

    &

$\left(
\begin{array}{ccccccc}
21 & 7 & 6 & 0 & -2 & -3 & -6\\
1 & 2 & 2 & 12 & 16 & 1 & 2
\end{array}
\right)$\\\hline

$\{1,2\cdot3,2^{2}3\}$     &

$\left(
\begin{array}{ccccccc}
22 & 7 & 3 & 0 & -2 & -5 & -6\\
1 & 2 & 4 & 12 & 13 & 2 & 2
\end{array}
\right)$

    &

$\left(
\begin{array}{ccccccc}
22 & 7  & 6 & 0 & -2 & -3 & -5\\
1 & 2 & 2 & 12 & 13 & 4 & 2
\end{array}
\right)$\\\hline

$\{1,2\cdot3,2\cdot3^{2},2^{2}3\}$     &

$\left(
\begin{array}{cccccc}
23& 8 & 2 & -1 & -4 & -7\\
1 & 2 & 4 & 25 & 2 & 2
\end{array}
\right)$

    &

$\left(
\begin{array}{ccccc}
23& 8 & 5 & -1 & -4\\
1 & 2& 2& 25 & 6
\end{array}
\right)$\\\hline

$\{1,3^{2},2^{2}3\}$     &

$\left(
\begin{array}{ccccccccc}
22 & 5 & 3 & 1 & -1 & -3 & -4 & -5 & -6\\
1 & 4 & 2 & 6 & 12 & 6 & 2 & 2 & 1
\end{array}
\right)$

    &

$\left(
\begin{array}{cccccccc}
22 & 8 & 3 & 1 & -1 & -3 & -5 & -6\\
1 & 2& 2 & 6 & 16 & 6 & 2 & 1
\end{array}
\right)$\\\hline

$\{1,3^{2},2\cdot3^{2},2^{2}3\}$     &

$\left(
\begin{array}{cccccc}
23 & 4 & 2 & -2 & -4 & -5\\
1 & 6 & 6 & 18 & 2 & 3
\end{array}
\right)$

    &

$\left(
\begin{array}{ccccccc}
23& 7 & 4 & 2 & -2 & -4 & -5\\
1 & 2 & 2 & 6 & 22 & 2 & 1
\end{array}
\right)$\\\hline

$\{1,3^{2},2\cdot3,2^{2}3\}$     &

$\left(
\begin{array}{ccccccc}
24 & 5 & 3 & 0 & -3 & -4 & -6\\
1 & 2 & 4 & 18 & 2 & 7 & 2
\end{array}
\right)$

    &

$\left(
\begin{array}{cccccc}
24 & 6 & 5 & 0 & -3 & -4\\
1 & 2& 2 & 18 & 6 & 7
\end{array}
\right)$\\\hline

$\{1,3^{2},2\cdot3,2\cdot3^{2},2^{2}3\}$     &

$\left(
\begin{array}{cccccccc}
25 & 6 & 2 & 1 & -1 & -2 & -3 & -7\\
1 & 2 & 4 & 6 & 12 & 2 & 7 &2
\end{array}
\right)$

    &

$\left(
\begin{array}{cccccccc}
25 & 6 & 5 & 1 & -1 & -2 & -3 & -4\\
1 & 2& 2 & 6 & 12 & 2 & 7 &4
\end{array}
\right)$\\\hline

$\{1,3,2^{2}3\}$     &

$\left(
\begin{array}{cccccccc}
24 & 5 & 1 & -1 & -3 & -4 & -8\\
1 & 4 & 8 & 12 & 8 & 2 & 1
\end{array}
\right)$

    &

$\left(
\begin{array}{cccccc}
24 & 8 & 1 & -1 & -3 & -8\\
1 & 2 & 8 & 16 & 8 & 1
\end{array}
\right)$\\\hline

$\{1,3,2\cdot3^{2},2^{2}3\}$     &

$\left(
\begin{array}{cccccc}
25 & 4 & 2 & -2 & -5 & -7\\
1 & 4 & 8 & 20 & 2 & 1
\end{array}
\right)$

    &

$\left(
\begin{array}{ccccc}
25 & 7 & 2 & -2 & -7\\
1 & 2 & 8 & 24 & 1
\end{array}
\right)$\\\hline

$\{1,3,2\cdot3,2^{2}3\}$     &

$\left(
\begin{array}{cccccc}
26 & 3 & 0 & -1 & -4 & -6\\
1 & 6 & 18 & 2 & 6 & 3
\end{array}
\right)$

    &

$\left(
\begin{array}{cccccccc}
26 & 6 & 3 & 0 & -1 & -3 & -4 & -6\\
1 & 2 & 2 & 18 & 2 & 4 & 6 & 1
\end{array}
\right)$\\\hline

$\{1,3,2\cdot3,2\cdot3^{2},2^{2}3\}$     &

$\left(
\begin{array}{ccccccccc}
27 & 4 & 2 & 1 & 0 & -1 & -3 & -5 & -7\\
1 & 2& 4 & 6 & 2 & 12 & 6 & 1 & 2
\end{array}
\right)$

    &

$\left(
\begin{array}{ccccccccc}
27 & 5 & 4 & 1 & 0 & -1 & -3 & -4 & -5\\
1 & 2 & 2 & 6 & 2 & 12 & 6 & 4 & 1
\end{array}
\right)$\\\hline

$\{1,3,3^{2},2^{2}3\}$     &

$\left(
\begin{array}{ccccc}
26 & 5 & -1 & -4 & -10\\
1 & 4 & 28 & 2 & 1
\end{array}
\right)$

    &

$\left(
\begin{array}{cccc}
26 & 8 & -1 & -10\\
1 & 2 & 32 & 1
\end{array}
\right)$\\\hline

$\{1,3,3^{2},2\cdot3^{2},2^{2}3\}$     &

$\left(
\begin{array}{cccccc}
27 & 4 & 0 & -2 & -5 & -9\\
1 & 4 &  16 &12 & 2 & 1
\end{array}
\right)$

    &

$\left(
\begin{array}{ccccc}
27 & 7 & 0 & -2 & -9\\
1 & 2 & 16 & 16 & 1
\end{array}
\right)$\\\hline

$\{1,3,3^{2},2\cdot3,2^{2}3\}$     &

$\left(
\begin{array}{ccccccc}
28 & 3 & 1 & 0 & -2 & -6 & -8\\
1 & 4 & 4 & 12 & 12 & 2 & 1
\end{array}
\right)$

    &

$\left(
\begin{array}{ccccccc}
28 & 6 & 1 & 0 & -2 & -3 & -8\\
1 & 2 & 4 & 12 & 12 & 4 & 1
\end{array}
\right)$\\\hline

$\{1,3,3^{2},2\cdot3,2\cdot3^{2},2^{2}3\}$     &

$\left(
\begin{array}{cccc}
29 & 2 & -1 & -7\\
1 & 8 & 24 & 3
\end{array}
\right)$

    &

$\left(
\begin{array}{cccccc}
29 & 5 & 2 & -1 & -4 & -7\\
1 & 2 & 4 & 24 & 4 & 1
\end{array}
\right)$\\\hline

    \end{tabular}

 }
 \end{table}
\begin{itemize}
\item Case 2: $|\mathcal{D}_{S_{1}}\triangle\mathcal{D}_{S_{2}}|=2$\\
Then $\mathcal{D}_{S_{1}}\triangle\mathcal{D}_{S_{2}}=\{2,2\cdot3\}$, $\{2,2^{2}\}$ or $\{2\cdot3,2^{2}\}$. Set $\mathcal{D}_{S_{1}}\triangle\mathcal{D}_{S_{2}}=\{a_{1},a_{2}\}$. Taking $\mathcal{D}_{S_{1}}\triangle\mathcal{D}_{S_{2}}$, $\phi(n/\cdot)$, $\mathcal{D}_{S_{1}}\setminus\mathcal{D}_{S_{2}}$, $\mathcal{D}_{S_{2}}\setminus\mathcal{D}_{S_{1}}$, as $A$, $f$, $A_{1}$, $A_{2}$, in Corollary \ref{notincontradictioncor}, we have either $\phi(n/a_{1})=\phi(n/a_{2})$ while $|\mathcal{D}_{S_{1}}\setminus\mathcal{D}_{S_{2}}|=|\mathcal{D}_{S_{2}}\setminus\mathcal{D}_{S_{1}}|=1$, or $\mathcal{D}_{S_{1}}\triangle\mathcal{D}_{S_{2}}=\emptyset$ leading to a contradiction. Note that $\phi(n/2)\neq\phi(n/(2\cdot3))$, that $\phi(n/2)=\phi(n/2^{2})$, and that $\phi(n/(2\cdot3))\neq\phi(n/2^{2})$. Without loss of generality, we have $\mathcal{D}_{S_{1}}\setminus\mathcal{D}_{S_{2}}=\{2\}$ and $\mathcal{D}_{S_{2}}\setminus\mathcal{D}_{S_{1}}=\{2^{2}\}$. Recall $2^{2}3\in\mathcal{D}_{S_{1}}\cap\mathcal{D}_{S_{2}}$. By Table \ref{table2232}, $\spec(\ICG(n,\mathcal{D}_{S_{1}}))\neq\spec(\ICG(n,\mathcal{D}_{S_{2}}))$, which is a contradiction.


\item Case 3: $|\mathcal{D}_{S_{1}}\triangle\mathcal{D}_{S_{2}}|=1$\\
Recall (\ref{22321eq1}). Taking $\mathcal{D}_{S_{1}}\triangle\mathcal{D}_{S_{2}}$, $\phi(n/\cdot)$, $\mathcal{D}_{S_{1}}\setminus\mathcal{D}_{S_{2}}$, $\mathcal{D}_{S_{2}}\setminus\mathcal{D}_{S_{1}}$, as $A$, $f$, $A_{1}$, $A_{2}$, in Lemma \ref{notincontradictionnew}, we have $\mathcal{D}_{S_{1}}\triangle\mathcal{D}_{S_{2}}=\emptyset$, which is a contradiction.
\end{itemize}
This completes the proof.\qed
\end{proof}

\begin{theorem}\label{2232}
Set $n=2^{2}3^{2}$. Let $\mathcal{D}_{S_{1}}$ and $\mathcal{D}_{S_{2}}$ be two subsets of $\mathcal{D}_{[n]}\setminus\{n\}$. Then $\spec(\ICG(n,\mathcal{D}_{S_{1}}))
=\spec(\ICG(n,\mathcal{D}_{S_{2}}))$ implies $\mathcal{D}_{S_{1}}=\mathcal{D}_{S_{2}}$.

\end{theorem}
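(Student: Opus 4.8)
The plan is to invoke the already-established Lemma~\ref{22321} together with the symmetric-difference reduction of Corollary~\ref{regularisocor}, in exactly the same way that Theorem~\ref{233} was deduced from Lemma~\ref{2331}. The only genuine case analysis---ruling out each possible value of $\mathcal{D}_{S_{1}}\triangle\mathcal{D}_{S_{2}}$ under the extra hypothesis $2^{2}3\in\mathcal{D}_{S_{1}}\cap\mathcal{D}_{S_{2}}$, which is where the eigenvalue tables come in---has already been absorbed into Lemma~\ref{22321}, so what remains for this theorem is short.

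First I would remove the standing assumption $2^{2}3\in\mathcal{D}_{S_{1}}\cap\mathcal{D}_{S_{2}}$. Since $3$ is an odd prime divisor of $n=2^{2}3^{2}$, Lemma~\ref{n/pnotintriangle} applied with $p=3$ yields $n/3=2^{2}3\notin\mathcal{D}_{S_{1}}\triangle\mathcal{D}_{S_{2}}$. This is precisely the divisor-membership condition (b)(1) of Corollary~\ref{regularisocor} for the divisor $d_{0}=2^{2}3$.

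Next I would match Lemma~\ref{22321} to hypothesis~(a) of Corollary~\ref{regularisocor}: that lemma states exactly that isospectrality of $\ICG(n,\mathcal{D}_{S_{1}})$ and $\ICG(n,\mathcal{D}_{S_{2}})$ together with $2^{2}3\in\mathcal{D}_{S_{1}}\cap\mathcal{D}_{S_{2}}$ forces $\mathcal{D}_{S_{1}}=\mathcal{D}_{S_{2}}$, which is statement~(a) with $d_{0}=2^{2}3$. Corollary~\ref{regularisocor} then upgrades this to statement~(b), namely that the weaker hypothesis $2^{2}3\notin\mathcal{D}_{S_{1}}\triangle\mathcal{D}_{S_{2}}$ already implies $\mathcal{D}_{S_{1}}=\mathcal{D}_{S_{2}}$. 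Combining with the previous paragraph, we conclude $\mathcal{D}_{S_{1}}=\mathcal{D}_{S_{2}}$, as required.

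At this level there is no serious obstacle; the heavy lifting is entirely contained in Lemma~\ref{22321}, whose proof relied on the super-sequence argument (via Lemma~\ref{supersequence1}) to pin down $\mathcal{D}_{S_{1}}\cap\{1,3,3^{2}\}$ and on the explicit spectra of Table~\ref{table2232} to kill the residual case $\mathcal{D}_{S_{1}}\setminus\mathcal{D}_{S_{2}}=\{2\}$, $\mathcal{D}_{S_{2}}\setminus\mathcal{D}_{S_{1}}=\{2^{2}\}$ arising from the coincidence $\phi(n/2)=\phi(n/2^{2})$. The only point to be careful about here is the bookkeeping that $2^{2}3$ is indeed $n/3$ and that $3$ is an odd prime divisor of $n$, so that Lemma~\ref{n/pnotintriangle} legitimately applies; everything else is the mechanical passage through Corollary~\ref{regularisocor}.
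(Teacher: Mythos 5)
Your proposal is correct and takes essentially the same route as the paper's own proof: Lemma~\ref{n/pnotintriangle} (with the odd prime $3$, so $n/3=2^{2}3$) gives $2^{2}3\notin\mathcal{D}_{S_{1}}\triangle\mathcal{D}_{S_{2}}$, Lemma~\ref{22321} supplies hypothesis~(a) of Corollary~\ref{regularisocor} for $d_{0}=2^{2}3$, and statement~(b) of that corollary then yields $\mathcal{D}_{S_{1}}=\mathcal{D}_{S_{2}}$. The additional bookkeeping you spell out is accurate and simply makes explicit what the paper's two-line proof leaves implicit.
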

\begin{proof}
By Lemma \ref{n/pnotintriangle}, $2^{2}3\notin\mathcal{D}_{S_{1}}\triangle\mathcal{D}_{S_{2}}$. By Lemma \ref{22321} and Corollary \ref{regularisocor}, we obtain the result.\qed

\end{proof}

\begin{theorem}\label{22q2}
Set $n=2^{2}q^{2}$ with prime $q>3$. Let $\mathcal{D}_{S_{1}}$ and $\mathcal{D}_{S_{2}}$ be two subsets of $\mathcal{D}_{[n]}\setminus\{n\}$. Then $\spec(\ICG(n,\mathcal{D}_{S_{1}}))
=\spec(\ICG(n,\mathcal{D}_{S_{2}}))$ implies $\mathcal{D}_{S_{1}}=\mathcal{D}_{S_{2}}$.
\end{theorem}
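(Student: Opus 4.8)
The plan is to mirror the proof of Theorem \ref{2232}, whittling down the symmetric difference $\mathcal{D}_{S_1}\triangle\mathcal{D}_{S_2}$ and then settling one delicate configuration by a multiplicity count. Write $n=2^2q^2$, whose divisors are $\{1,2,4,q,2q,4q,q^2,2q^2,4q^2\}$. First I would invoke Lemma \ref{n/2notintriangle} and Lemma \ref{n/pnotintriangle} to remove $n/2=2q^2$ and $n/q=4q$ from $\mathcal{D}_{S_1}\triangle\mathcal{D}_{S_2}$, so that $\mathcal{D}_{S_1}\triangle\mathcal{D}_{S_2}\subseteq\{1,2,4,q,2q,q^2\}$. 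Next, applying part (a) of Corollary \ref{n/2cor} restricted to the odd divisors $\mathcal{D}_{[n]}\cap\mathcal{O}=\{1,q,q^2\}$ yields $\sum_{d\in\{1,q,q^2\}}\chi_{\mathcal{D}_{S_1}}(d)\phi(n/d)=\sum_{d\in\{1,q,q^2\}}\chi_{\mathcal{D}_{S_2}}(d)\phi(n/d)$. Since $(\phi(n/q^2),\phi(n/q),\phi(n/1))=(2,2(q-1),2q(q-1))$ is a $1$-super sequence for $q\ge 3$, Lemma \ref{supersequence1} forces $\mathcal{D}_{S_1}\cap\{1,q,q^2\}=\mathcal{D}_{S_2}\cap\{1,q,q^2\}$. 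Hence $\mathcal{D}_{S_1}\triangle\mathcal{D}_{S_2}\subseteq\{2,4,2q\}$, exactly paralleling the passage leading to $(\ref{22321eq5})$ in Lemma \ref{22321}.

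Second, I would run the case analysis on $|\mathcal{D}_{S_1}\triangle\mathcal{D}_{S_2}|\in\{1,2,3\}$, using the degree identity $\sum_{d\in\mathcal{D}_{S_1}\setminus\mathcal{D}_{S_2}}\phi(n/d)=\sum_{d\in\mathcal{D}_{S_2}\setminus\mathcal{D}_{S_1}}\phi(n/d)$ coming from $\lambda_n(S_1)=\lambda_n(S_2)$ via Lemma \ref{lambdan} and $(\ref{ramanujan})$, $(\ref{lambdaphimu})$. Here $\phi(n/2)=\phi(n/4)=q(q-1)$ while $\phi(n/2q)=q-1$. The case $|\mathcal{D}_{S_1}\triangle\mathcal{D}_{S_2}|=3$ is ruled out by applying Lemma \ref{notincontradiction} to each of the three two-element subsets of $\{2,4,2q\}$ (each such subset has $\phi$-sum strictly larger than the one remaining term), the case $|\mathcal{D}_{S_1}\triangle\mathcal{D}_{S_2}|=1$ by Lemma \ref{notincontradictionnew}, and the case $|\mathcal{D}_{S_1}\triangle\mathcal{D}_{S_2}|=2$ by Corollary \ref{notincontradictioncor}, which keeps only the configuration with equal $\phi$-values, namely (without loss of generality) $\mathcal{D}_{S_1}\setminus\mathcal{D}_{S_2}=\{2\}$ and $\mathcal{D}_{S_2}\setminus\mathcal{D}_{S_1}=\{4\}$.

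The hard part will be this last configuration, which the degree equation cannot detect because $\phi(n/2)=\phi(n/4)$; for $q=3$ it is handled by the exhaustive Table \ref{table2232}, but for arbitrary $q$ I would argue spectrally. Since $\mathcal{D}_{S_1}$ and $\mathcal{D}_{S_2}$ now differ only in $2$ versus $4$, formula $(\ref{ramanujan})$ gives $\lambda_k(S_1)-\lambda_k(S_2)=\mathcal{R}_{2q^2}(k)-\mathcal{R}_{q^2}(k)$, and by Lemma \ref{sameorbiteigen} this depends only on $d=\gcd(k,n)$. Evaluating through $(\ref{lambdaphimu})$ shows the difference equals $2q$ when $d=q$, equals $-2q(q-1)$ when $d=q^2$, and vanishes for every other divisor $d$. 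Thus, with $R=[n]\setminus(G_n(q)\cup G_n(q^2))$, we have $\lambda_k(S_1)=\lambda_k(S_2)$ for all $k\in R$, so Lemma \ref{minusR} is applicable. I expect this Ramanujan-sum bookkeeping to be the only genuinely new ingredient beyond the $q=3$ case.

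Finally I would extract the contradiction. Let $\alpha_q$ denote the common value of $\lambda_k(S_1)$ on $G_n(q)$ and $\beta_q$ that of $\lambda_k(S_2)$; then $\alpha_q=\beta_q+2q\neq\beta_q$. In $\ICG(n,\mathcal{D}_{S_1})$ the eigenvalue $\alpha_q$ is attained on all of $G_n(q)$, so it contributes $|G_n(q)|=\phi(4q)=2(q-1)$ to $|\mathcal{L}_{S_1}(\alpha_q)\setminus R|$, whereas in $\ICG(n,\mathcal{D}_{S_2})$ the value $\alpha_q$ can occur off $R$ only on $G_n(q^2)$ (because $\alpha_q\neq\beta_q$), contributing at most $|G_n(q^2)|=\phi(4)=2$. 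Since $2(q-1)>2$ for $q>3$, this contradicts the equality $|\mathcal{L}_{S_1}(\alpha_q)\setminus R|=|\mathcal{L}_{S_2}(\alpha_q)\setminus R|$ guaranteed by Lemma \ref{minusR}. Hence no case survives, so $\mathcal{D}_{S_1}\triangle\mathcal{D}_{S_2}=\emptyset$, that is, $\mathcal{D}_{S_1}=\mathcal{D}_{S_2}$; everything except the third and fourth paragraphs reuses the reductions already established for Theorem \ref{2232}.
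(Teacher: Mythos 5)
Your proposal is correct, and it splits cleanly into two parts relative to the paper. The reduction in your first two paragraphs (removing $n/2=2q^{2}$ and $n/q=2^{2}q$ via Lemmas \ref{n/2notintriangle} and \ref{n/pnotintriangle}, settling the odd divisors $\{1,q,q^{2}\}$ with Corollary \ref{n/2cor}(a) and the $1$-super sequence $(2,2(q-1),2q(q-1))$ through Lemma \ref{supersequence1}, then eliminating $|\mathcal{D}_{S_{1}}\triangle\mathcal{D}_{S_{2}}|\in\{1,3\}$ and all of case $2$ except $\{2\}$ versus $\{2^{2}\}$) is exactly the paper's, which simply cites the proof of Lemma \ref{22321} with $3$ replaced by $q$. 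Your treatment of the surviving configuration, however, is genuinely different from the paper's. You compute the full difference profile $\lambda_{k}(S_{1})-\lambda_{k}(S_{2})=\mathcal{R}_{2q^{2}}(k)-\mathcal{R}_{q^{2}}(k)$, and your values check out: it equals $2q$ on $G_{n}(q)$, $-2q(q-1)$ on $G_{n}(q^{2})$, and $0$ on all seven remaining divisor classes; hence $R=[n]\setminus(G_{n}(q)\cup G_{n}(q^{2}))$ is a legitimate choice in Lemma \ref{minusR}, and the count $|\mathcal{L}_{S_{1}}(\alpha_{q})\setminus R|\geq|G_{n}(q)|=2(q-1)>2=|G_{n}(q^{2})|\geq|\mathcal{L}_{S_{2}}(\alpha_{q})\setminus R|$ (using $\alpha_{q}=\beta_{q}+2q\neq\beta_{q}$) gives the contradiction. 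The paper instead takes $R=\{n/2,n\}$ (justified by Lemmas \ref{lambdan} and \ref{lambdan/2}) and runs a congruence argument mod $q-1$ in the style of the proof of Lemma \ref{lambdan/2}: every class size $|\mathcal{L}_{S_{i}}(\nu_{j})\setminus\{2q^{2},2^{2}q^{2}\}|$ is $\equiv 0\ (\mathrm{mod}\ q-1)$ except the one containing $q^{2}$, which is $\equiv 2$, so $q>3$ (whence $q-1>2$) forces $\lambda_{q^{2}}(S_{1})=\lambda_{q^{2}}(S_{2})$, contradicting $\mathcal{R}_{2q^{2}}(q^{2})=-q(q-1)<q(q-1)=\mathcal{R}_{q^{2}}(q^{2})$. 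The trade-off: the paper's route needs only the two identities $\lambda_{n}(S_{1})=\lambda_{n}(S_{2})$ and $\lambda_{n/2}(S_{1})=\lambda_{n/2}(S_{2})$ and no difference bookkeeping, but it degenerates at $q=3$ (where $2\equiv 0\ (\mathrm{mod}\ q-1)$), which is precisely why Theorem \ref{2232} falls back on the exhaustive Table \ref{table2232}; your argument costs the Ramanujan-sum tabulation but uses no modular arithmetic, targets $\lambda_{q}$ rather than $\lambda_{q^{2}}$, and since $2(q-1)>2$ already holds at $q=3$, it would dispose of the $\{2\}$ versus $\{2^{2}\}$ configuration for $n=2^{2}3^{2}$ as well, making the table (and the complement detour through Corollary \ref{regularisocor} in that case) avoidable — a modest but real gain in uniformity.
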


\begin{proof}
In order to prove $\mathcal{D}_{S_{1}}=\mathcal{D}_{S_{2}}$, similar to the proof of Lemma \ref{22321}, replacing $3$ by $q$, we only need to rule out the case where $\mathcal{D}_{S_{1}}\setminus\mathcal{D}_{S_{2}}=\{2\}$ and $\mathcal{D}_{S_{2}}\setminus\mathcal{D}_{S_{1}}=\{2^{2}\}$. Set
\begin{align*}
\spec(\ICG(n,\mathcal{D}_{S_{1}}))
=\spec(\ICG(n,\mathcal{D}_{S_{2}}))
=\left(
\begin{array}{cccc}
\nu_{1} & \nu_{2} & \ldots & \nu_{J} \\
m_{1} & m_{2} & \ldots & m_{J}
\end{array}
\right).
\end{align*}
By simple calculation, $\forall d\in\{1,2,2^{2},q,2q,2^{2}q\}$, we have $(q-1)|\phi(n/d)$. The following part is similar to the proof of Lemma \ref{lambdan/2}. Set $\nu_{j_{0}}=\lambda_{q^{2}}(S_{1})$. Then
\begin{align*}
|\mathcal{L}_{S_{1}}(\nu_{j_{0}})\setminus\{2q^{2},2^{2}q^{2}\}|
&=\sum_{d\in\mathcal{D}_{\mathcal{L}_{S_{1}}(\nu_{j_{0}})}\setminus\{2q^{2},2^{2}q^{2}\}}\phi(n/d)\tag{by (\ref{Scard})}
\\&=\phi(n/q^{2})
+\sum_{d\in\mathcal{D}_{\mathcal{L}_{S_{1}}(\nu_{j_{0}})}\setminus\{q^{2},2q^{2},2^{2}q^{2}\}}\phi(n/d)
\\&\equiv2~(\mathrm{mod}\; (q-1))\tag{because $\forall d\in\{1,2,2^{2},q,2q,2^{2}q\}$, $(q-1)|\phi(n/d)$}
\end{align*}
and $\forall j\in[J]\setminus\{j_{0}\}$,
\begin{align*}
|\mathcal{L}_{S_{1}}(\nu_{j})\setminus\{2q^{2},2^{2}q^{2}\}|
&=\sum_{d\in\mathcal{D}_{\mathcal{L}_{S_{1}}(\nu_{j})}\setminus\{2q^{2},2^{2}q^{2}\}}\phi(n/d)\tag{by (\ref{Scard})}
\\&=\sum_{d\in\mathcal{D}_{\mathcal{L}_{S_{1}}(\nu_{j})}\setminus\{q^{2},2q^{2},2^{2}q^{2}\}}\phi(n/d)\tag{because $\lambda_{q^{2}}(S_{1})=\nu_{j_{0}}\neq\nu_{j}$}
\\&\equiv0~(\mathrm{mod}\; (q-1)).\tag{because $\forall d\in\{1,2,2^{2},q,2q,2^{2}q\}$, $(q-1)|\phi(n/d)$}
\end{align*}
Set $\nu_{j_{0}'}=\lambda_{q^{2}}(S_{2})$. Similarly,
$$|\mathcal{L}_{S_{2}}(\nu_{j_{0}'})\setminus\{2q^{2},2^{2}q^{2}\}|\equiv2~(\mathrm{mod}\; (q-1)).$$
By Lemmas \ref{lambdan} and \ref{lambdan/2}, we have $\lambda_{n}(S_{1})=\lambda_{n}(S_{2})$ and $\lambda_{\frac{n}{2}}(S_{1})=\lambda_{\frac{n}{2}}(S_{2})$. By Lemma \ref{minusR}, $\forall j\in[J]$,
$$
|\mathcal{L}_{S_{1}}(\nu_{j})\setminus\{2q^{2},2^{2}q^{2}\}|
=|\mathcal{L}_{S_{2}}(\nu_{j})\setminus\{2q^{2},2^{2}q^{2}\}|.
$$
In particular,
$$
|\mathcal{L}_{S_{1}}(\nu_{j_{0}'})\setminus\{2q^{2},2^{2}q^{2}\}|
=|\mathcal{L}_{S_{2}}(\nu_{j_{0}'})\setminus\{2q^{2},2^{2}q^{2}\}|
\equiv2~(\mathrm{mod}\; (q-1)).
$$
Therefore, $j_{0}=j_{0}'$ and so
$$\lambda_{q^{2}}(S_{1})=\nu_{j_{0}}=\nu_{j_{0}'}=\lambda_{q^{2}}(S_{2}).$$
Note that $\mathcal{D}_{S_{1}}=(\mathcal{D}_{S_{1}}\setminus\mathcal{D}_{S_{2}})\cup(\mathcal{D}_{S_{1}}\cap\mathcal{D}_{S_{2}})$ and that $\mathcal{D}_{S_{2}}=(\mathcal{D}_{S_{2}}\setminus\mathcal{D}_{S_{1}})\cup(\mathcal{D}_{S_{1}}\cap\mathcal{D}_{S_{2}})$. We have
\begin{align*}
\lambda_{q^{2}}(S_{1})
&=\mathcal{R}_{n/2}(q^{2})
+\sum_{d\in\mathcal{D}_{S_{1}}\cap\mathcal{D}_{S_{2}}}\mathcal{R}_{n/d}(q^{2})\tag{by (\ref{ramanujan})}
\\&=-q(q-1)+\sum_{d\in\mathcal{D}_{S_{1}}\cap\mathcal{D}_{S_{2}}}\mathcal{R}_{n/d}(q^{2})\tag{by (\ref{lambdaphimu})}
\\&<q(q-1)+\sum_{d\in\mathcal{D}_{S_{1}}\cap\mathcal{D}_{S_{2}}}\mathcal{R}_{n/d}(q^{2})
\\&=\mathcal{R}_{n/2^{2}}(q^{2})
+\sum_{d\in\mathcal{D}_{S_{1}}\cap\mathcal{D}_{S_{2}}}\mathcal{R}_{n/d}(q^{2})\tag{by (\ref{lambdaphimu})}
\\&=\lambda_{q^{2}}(S_{2})\tag{by (\ref{ramanujan})},
\end{align*}
which is a contradiction.\qed

\end{proof}

\begin{lemma}\label{3272}
Set $n=3^{2}7^{2}$. Let $\mathcal{D}_{S_{1}}$ and $\mathcal{D}_{S_{2}}$ be two subsets of $\mathcal{D}_{[n]}\setminus\{n\}$ such that
\begin{itemize}
\item[\rm{(1)}]$\mathcal{D}_{S_{1}}\setminus\mathcal{D}_{S_{2}}=\{3\}$ and $\mathcal{D}_{S_{2}}\setminus\mathcal{D}_{S_{1}}=\{3^{2},7,7^{2}\}$; and
\item[\rm{(2)}]$1\in\mathcal{D}_{S_{1}}\cap\mathcal{D}_{S_{2}}$.
\end{itemize}
Then $\spec(\ICG(n,\mathcal{D}_{S_{1}}))
\neq\spec(\ICG(n,\mathcal{D}_{S_{2}}))$.
\end{lemma}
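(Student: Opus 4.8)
The plan is to sidestep a case-by-case comparison of the two full spectra and instead produce a single spectral invariant that separates the graphs for \emph{every} admissible common set. Put $n=3^{2}7^{2}=441$ and $C=\mathcal{D}_{S_{1}}\cap\mathcal{D}_{S_{2}}$; conditions~(1) and~(2) force
$$
\mathcal{D}_{S_{1}}=C\cup\{3\},\qquad \mathcal{D}_{S_{2}}=C\cup\{3^{2},7,7^{2}\},\qquad \{1\}\subseteq C\subseteq\{1,21,63,147\},
$$
so (condition~(2) fixing $1\in C$) the three divisors $21,63,147$ may independently belong to $C$, giving exactly $2^{3}=8$ possibilities. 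The invariant I would use is the third spectral moment $\Tr(A^{3})=\sum_{k\in[n]}\lambda_{k}^{3}$ (six times the number of triangles): isospectral graphs satisfy $\Tr(A_{1}^{3})=\Tr(A_{2}^{3})$, so it suffices to show this identity fails, uniformly in $C$.

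First I would group the eigenvalues by $\gcd$. By Lemma~\ref{sameorbiteigen} the value $\lambda_{k}(S)$ depends only on $g=\gcd(k,n)$; writing $\Lambda_{g}(S)=\sum_{d\in\mathcal{D}_{S}}\mathcal{R}_{n/d}(g)$ for this common value and using $|G_{n}(g)|=\phi(n/g)$, one gets $\Tr(A^{3})=\sum_{g\mid n}\phi(n/g)\,\Lambda_{g}(S)^{3}$. Next, using (\ref{ramanujan}) and (\ref{lambdaphimu}), I would tabulate the $C$-free differences
$$
\delta_{g}:=\Lambda_{g}(S_{1})-\Lambda_{g}(S_{2})=\mathcal{R}_{n/3}(g)-\mathcal{R}_{n/3^{2}}(g)-\mathcal{R}_{n/7}(g)-\mathcal{R}_{n/7^{2}}(g)
$$
over the nine divisors of $n$. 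One finds $\delta_{g}=0$ for $g\in\{1,3,9,n\}$, while $(\delta_{7},\delta_{21},\delta_{49},\delta_{63},\delta_{147})=(14,14,-84,-49,63)$ with multiplicities $\phi(n/g)=(36,12,6,6,2)$; thus the two spectra can differ only through these five classes.

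The crux is to evaluate $\Tr(A_{1}^{3})-\Tr(A_{2}^{3})=\sum_{g}\phi(n/g)\bigl(\Lambda_{g}(S_{1})^{3}-\Lambda_{g}(S_{2})^{3}\bigr)$ and see it is a nonzero constant independent of $C$. I would record the five relevant $\Lambda_{g}(S_{1})$ as affine functions of $x=\chi_{C}(21),\,y=\chi_{C}(63),\,z=\chi_{C}(147)$. A pleasant feature, coming from the fact that $\mathcal{R}_{21}(g),\mathcal{R}_{7}(g),\mathcal{R}_{3}(g)$ are constant on $\{7,49\}$ and on $\{21,63,147\}$, is that only two linear combinations occur: setting $P=12x+6y+2z$ and $Q=-6x+6y-z$,
$$
\Lambda_{7}(S_{1})=7+Q,\quad \Lambda_{49}(S_{1})=-42+Q,\quad \Lambda_{21}(S_{1})=7+P,\quad \Lambda_{63}(S_{1})=-56+P,\quad \Lambda_{147}(S_{1})=-42+P,
$$
with $\Lambda_{g}(S_{2})=\Lambda_{g}(S_{1})-\delta_{g}$. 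Substituting and using that the two $Q$-classes sit symmetrically about $Q$ (so their cube-differences carry no term linear in $Q$), while among the three $P$-classes the $P^{2}$- and $P$-terms cancel after weighting, every monomial in $P,Q$ drops out and one is left with $\Tr(A_{1}^{3})-\Tr(A_{2}^{3})=259308\neq0$. Hence the graphs are not isospectral.

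The main obstacle is precisely this cancellation: one must be certain that all $C$-dependent contributions to the third moment vanish, so that a single constant settles all eight subcases at once. This is a finite, mechanical verification once the Ramanujan-sum values at the five classes are in hand (the coefficients of $Q^{2}$, $P^{2}$ and $P$ in the weighted sum are $36\cdot42-6\cdot252=0$, $12\cdot42-6\cdot147+2\cdot189=0$ and $6\cdot9261-2\cdot27783=0$, respectively), but it is the step that must be carried out with care. Should one prefer a more pedestrian route, the eight pairs of spectra can be listed in a table, in the style of those used for $n=2^{3}\cdot3$ and $n=2^{2}\cdot3^{2}$; the third-moment computation is shorter and uniform, so I would present that.
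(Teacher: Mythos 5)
Your proposal is correct, and it takes a genuinely different route from the paper's. The paper proves Lemma \ref{3272} by pure enumeration: Table \ref{table3272} lists, for each of the eight admissible common parts $\mathcal{D}_{S_{1}}\cap\mathcal{D}_{S_{2}}\subseteq\{1,21,63,147\}$ (with $1$ forced in), the two complete spectra and lets the reader observe that they never coincide. You instead isolate one invariant, the third spectral moment $\Tr(A^{3})$, grouped by $\gcd$-classes via Lemma \ref{sameorbiteigen} and $|G_{n}(g)|=\phi(n/g)$, and show the difference is a nonzero constant uniform in $C$. I verified your computation: the class differences $(\delta_{7},\delta_{21},\delta_{49},\delta_{63},\delta_{147})=(14,14,-84,-49,63)$ with weights $(36,12,6,6,2)$ are correct (all other classes give $\delta_{g}=0$); the affine forms $\Lambda_{7}(S_{1})=7+Q$, $\Lambda_{49}(S_{1})=-42+Q$, $\Lambda_{21}(S_{1})=7+P$, $\Lambda_{63}(S_{1})=-56+P$, $\Lambda_{147}(S_{1})=-42+P$ with $P=12x+6y+2z$, $Q=-6x+6y-z$ match the Ramanujan-sum values; the three cancellations $36\cdot42-6\cdot252=0$, $12\cdot42-6\cdot147+2\cdot189=0$ and $6\cdot9261-2\cdot27783=0$ all hold; and the residual constant is $-864360+1123668=259308$, which I cross-checked against two rows of Table \ref{table3272} (for $C=\{1\}$ the third moments are $36303120$ versus $36043812$, and for $C=\{1,3\cdot7\}$ the same difference $259308$ recurs). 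What each approach buys: the paper's table carries more information (the full spectra, machine-checkable row by row) but is long and offers no structural insight; your argument settles all eight subcases in one stroke, is shorter, and explains the failure of isospectrality combinatorially --- the two graphs differ by exactly $259308/6=43218$ triangles no matter which common part $C$ is chosen. The only point requiring care, as you note, is the vanishing of every $C$-dependent term, and your explicit coefficient checks close that gap; your proof is a valid, and arguably preferable, replacement for the paper's table.
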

\begin{proof}
The 8 pairs of spectra listed in Table \ref{table3272} suggest the result.
    \begin{table}[!h]
    \centering
{\tiny
    \caption{$n=3^{2}7^{2}$, $\mathcal{D}_{S_{1}}\setminus\mathcal{D}_{S_{2}}=\{3\}$, $\mathcal{D}_{S_{2}}\setminus\mathcal{D}_{S_{1}}=\{3^{2},7,7^{2}\}$ and $1\in\mathcal{D}_{S_{1}}\cap\mathcal{D}_{S_{2}}$\label{table3272}}
    \begin{tabular}{|c|c|c|}

    \hline
    $\mathcal{D}_{S_{1}}\cap\mathcal{D}_{S_{2}}$ &   $\spec(\ICG(n,\mathcal{D}_{S_{1}}))$    &
    $\spec(\ICG(n,\mathcal{D}_{S_{2}}))$  \\\hline
$\{1\}$     &

$\left(
\begin{array}{ccccc}
336 & 7 & 0 & -42 & -56\\
1 & 48 & 378 & 8 & 6
\end{array}
\right)$

    &

$\left(
\begin{array}{ccccc}
336 & 42 & 0 & -7 & -105\\
1 & 6 & 378 & 54 & 2
\end{array}
\right)$\\\hline

$\{1,3^{2}7\}$     &

$\left(
\begin{array}{ccccc}
342 & 13 & -1 & -36 & -50\\
1 & 48 & 378 & 8 & 6
\end{array}
\right)$

    &

$\left(
\begin{array}{cccc}
342& 48 & -1 & -99\\
1 & 6 & 432 & 2
\end{array}
\right)$\\\hline

$\{1,3\cdot7^{2}\}$     &

$\left(
\begin{array}{cccccccc}
338 & 9 & 6 & 2 & -1 & -40 & -43 & -54\\
1 & 12 & 36 & 126 & 252 & 2 & 6 & 6
\end{array}
\right)$

    &

$\left(
\begin{array}{ccccccc}
338 & 41 & 2 & -1 & -5 & -8 & -103\\
1 & 6 & 126 & 252 & 18 & 36 & 2
\end{array}
\right)$\\\hline

$\{1,3\cdot7^{2},3^{2}7\}$     &

$\left(
\begin{array}{cccccccc}
344 & 15 &12 & 1 & -2 & -34 & -37 & -48\\
1 & 12 & 36 & 126 & 252 & 2 & 6 & 6
\end{array}
\right)$

    &

$\left(
\begin{array}{ccccc}
344 & 47 & 1 & -2 & -97\\
1 & 6 & 144 & 288 & 2
\end{array}
\right)$\\\hline

$\{1,3\cdot7\}$     &

$\left(
\begin{array}{ccccccc}
348 & 19 & 1 & -2 & -30 & -44 & -48\\
1 & 12 & 288 & 126 & 2  & 6 & 6
\end{array}
\right)$

    &

$\left(
\begin{array}{ccccccc}
348 & 36 & 5 & 1 & -2 & -13 & -93\\
1 & 6 & 18 & 252 & 126 & 36 & 2
\end{array}
\right)$\\\hline

$\{1,3\cdot7,3^{2}7\}$     &

$\left(
\begin{array}{cccccccc}
354 & 25 & 7 & 0 & -3 & -24 & -38 & -42\\
1 & 12 & 36 & 252 & 126 & 2 & 6 & 6
\end{array}
\right)$

    &

$\left(
\begin{array}{ccccccc}
354& 42 & 11 & 0 & -3 & -7 & -87\\
1 & 6 & 18 & 252 & 126 & 36 & 2
\end{array}
\right)$\\\hline

$\{1,3\cdot7,3\cdot7^{2}\}$     &

$\left(
\begin{array}{cccccc}
350 & 21 & 0 & -28 & -42 & -49\\
1 & 12 & 414& 2 & 6 & 6
\end{array}
\right)$

    &

$\left(
\begin{array}{cccccc}
350 & 35 & 7 & 0 & -14 & -91\\
1 & 6 & 18 & 378 & 36 & 2
\end{array}
\right)$\\\hline

$\{1,3\cdot7,3\cdot7^{2},3^{2}7\}$     &

$\left(
\begin{array}{ccccccc}
356& 27 & 6 & -1 & -22 & -36 & -43\\
1 & 12 & 36 & 378 & 2 & 6 & 6
\end{array}
\right)$

    &

$\left(
\begin{array}{cccccc}
356 & 41 & 13 & -1 & -8 & -85\\
1 & 6 & 18 & 378 & 36 & 2
\end{array}
\right)$\\\hline

    \end{tabular}

 }
 \end{table}
This completes the proof.\qed

\end{proof}


\begin{lemma}\label{p2q21}
Set $n=p^{2}q^{2}$ with primes $3\leq p<q$. Let $\mathcal{D}_{S_{1}}$ and $\mathcal{D}_{S_{2}}$ be two subsets of $\mathcal{D}_{[n]}\setminus\{n\}$ such that $1\in\mathcal{D}_{S_{1}}\cap\mathcal{D}_{S_{2}}$. Then $\spec(\ICG(n,\mathcal{D}_{S_{1}}))
=\spec(\ICG(n,\mathcal{D}_{S_{2}}))$ implies $\mathcal{D}_{S_{1}}=\mathcal{D}_{S_{2}}$.
\end{lemma}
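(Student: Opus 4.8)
The plan is to force the symmetric difference $\mathcal{D}_{S_1}\triangle\mathcal{D}_{S_2}$ down to the four divisors $\{p,p^2,q,q^2\}$ and then to separate the two graphs arithmetically. First I would record the two linear relations coming from the extreme eigenvalues. By Lemma~\ref{lambdan}, $\lambda_n(S_1)=\lambda_n(S_2)$, so (\ref{ramanujan})--(\ref{lambdaphimu}) give the weight identity $\sum_{d\in\mathcal{D}_{S_1}\setminus\mathcal{D}_{S_2}}\phi(n/d)=\sum_{d\in\mathcal{D}_{S_2}\setminus\mathcal{D}_{S_1}}\phi(n/d)$. Since $\phi(n)/n=\tfrac{(p-1)(q-1)}{pq}\ge\tfrac{8}{15}>\tfrac12$, Corollary~\ref{1equal} yields $\lambda_1(S_1)=\lambda_1(S_2)$, i.e.\ the companion M\"{o}bius identity $\sum_{d\in\mathcal{D}_{S_1}\setminus\mathcal{D}_{S_2}}\mu(n/d)=\sum_{d\in\mathcal{D}_{S_2}\setminus\mathcal{D}_{S_1}}\mu(n/d)$.

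Next I would shrink $\mathcal{D}_{S_1}\triangle\mathcal{D}_{S_2}$. The hypothesis $1\in\mathcal{D}_{S_1}\cap\mathcal{D}_{S_2}$ removes $1$; Lemma~\ref{n/pnotintriangle} removes $n/p=pq^2$ and $n/q=p^2q$; and on the surviving set $\{p,p^2,q,pq,q^2\}$ the function $d\mapsto\mu(n/d)$ is nonnegative and vanishes everywhere except at $pq$, where $\mu(pq)=1$, so the M\"{o}bius identity together with Lemma~\ref{notincontradiction} (taking $B=\{pq\}$) removes $pq$. Hence $\mathcal{D}_{S_1}\triangle\mathcal{D}_{S_2}\subseteq\{p,p^2,q,q^2\}$. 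A short computation shows that for $p\ge3$ the four weights $\phi(n/p)=(p-1)q(q-1)$, $\phi(n/p^2)=q(q-1)$, $\phi(n/q)=p(p-1)(q-1)$, $\phi(n/q^2)=p(p-1)$ are pairwise distinct.

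I would then analyse $|\mathcal{D}_{S_1}\triangle\mathcal{D}_{S_2}|$. Size $1$ is killed by Lemma~\ref{notincontradictionnew}; size $2$ by Corollary~\ref{notincontradictioncor}, since the weights are distinct. For sizes $3$ and $4$ the weight identity forces one weight (or a sum of two weights) to equal the complementary sum; solving these balancing equations over primes $3\le p<q$ leaves only two partitions: (A) $\{p\}\mid\{p^2,q,q^2\}$, which forces $q=(p^2-2)/(p-2)$ and hence the single admissible pair $(p,q)=(3,7)$, and (B) $\{p^2\}\mid\{q,q^2\}$, which forces $q=p^2-p+1$. Every other triple and every other $4$-split is eliminated directly by Lemmas~\ref{notincontradiction}--\ref{notincontradictionnew}.

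It remains to destroy (A) and (B), and this is where the difficulty lies. In both balanced cases $\lambda_n$ and $\lambda_1$ already agree; a direct Ramanujan-sum check even gives $\lambda_p(S_1)=\lambda_p(S_2)$ and $\lambda_{p^2}(S_1)=\lambda_{p^2}(S_2)$, while the orbits that actually discriminate are $G_n(q)$ and $G_n(q^2)$, on which $\lambda_q$ and $\lambda_{q^2}$ differ between $S_1$ and $S_2$ by $q$ and $q(q-1)$ respectively. Case (A) survives only at $(p,q)=(3,7)$ and is exactly the tabulated non-coincidence of spectra in Lemma~\ref{3272}. Case (B) is the genuine obstacle: because $G_n(q)$ and $G_n(q^2)$ carry comparatively small multiplicities, and because the relation $q-1=p(p-1)$ in family (B) makes $\phi(n/q)$ and $\phi(n/q^2)$ divisible by the natural moduli, neither the multiplicity inequality of Lemma~\ref{mcontradiction} nor a $\lambda_{n/2}$-style modular count can be bootstrapped to promote $\lambda_q$ or $\lambda_{q^2}$ to an equality. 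Consequently (B) must be settled by comparing the full integer eigenvalue--multiplicity profiles: the exceptional small instance $(3,7)$ by explicit tabulation in the manner of Lemma~\ref{3272}, and the general family $q=p^2-p+1$ by a symbolic multiplicity count exhibiting an eigenvalue whose total multiplicity differs between $\ICG(n,\mathcal{D}_{S_1})$ and $\ICG(n,\mathcal{D}_{S_2})$. The hard part, and the step I expect to consume most of the effort, is precisely this separation of the balanced case (B): all the cheap invariants coincide there, so only a fine comparison of multiplicities can distinguish the two graphs.
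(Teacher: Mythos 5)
Your reduction is faithful to the paper's actual proof: the two linear identities from $\lambda_n$ and (via Corollary~\ref{1equal}) $\lambda_1$, the removal of $pq^2,p^2q$ by Lemma~\ref{n/pnotintriangle} and of $pq$ by the M\"obius identity with Lemma~\ref{notincontradiction}, the elimination of sizes $1$ and $2$ (the near-coincidence $\phi(n/p^2)=\phi(n/q)$, i.e.\ $q=p(p-1)$, being killed by parity), and the identification of the only two balanced splits, (A) $\{p\}\mid\{p^2,q,q^2\}$ forcing $(p,q)=(3,7)$ and settled by the tabulation of Lemma~\ref{3272}, and (B) $\{p^2\}\mid\{q,q^2\}$ forcing $q=p^2-p+1$. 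But your treatment of (B) contains a genuine gap, and moreover rests on a false claim. You assert that Lemma~\ref{mcontradiction} ``cannot be bootstrapped to promote $\lambda_q$ or $\lambda_{q^2}$ to an equality'' and that (B) must instead be settled by an unspecified ``symbolic multiplicity count''; you then never carry out that count, so the hardest case of the lemma is left unproven. In fact the bootstrap is exactly what the paper does, using the very equalities you computed yourself: since in (B) the Ramanujan sums over the differing divisors cancel at $k=p$ and $k=p^2$ (namely $\mathcal{R}_{n/q}(p)+\mathcal{R}_{n/q^2}(p)=p-p=0=\mathcal{R}_{n/p^2}(p)$, and likewise at $p^2$), one has $\lambda_p(S_1)=\lambda_p(S_2)$ and $\lambda_{p^2}(S_1)=\lambda_{p^2}(S_2)$, so $\mathcal{D}_R=\{n,1,p,p^2\}$ is a legitimate choice in Lemma~\ref{mcontradiction}. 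Taking $\mathcal{D}_T=\{q\}$, condition (2) reduces, using $q-1=p(p-1)$, to
\begin{align*}
\phi(n/q)=p^2(p-1)^2&>\phi(n/q^{2})+\phi(n/pq)+\phi(n/pq^{2})+\phi(n/p^{2}q)\\
&=(p-1)(p^{2}+p+1),
\end{align*}
which holds for all $p\geq3$. The lemma then forces $\lambda_q(S_1)=\lambda_q(S_2)$, while your own orbit computation gives $\lambda_q(S_1)-\lambda_q(S_2)=0-(-q)=q\neq0$ --- a contradiction that closes (B) uniformly in $p$, with no table and no multiplicity profile needed (in particular the instance $(p,q)=(3,7)$ of family (B) requires no separate tabulation). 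The moral you missed is that Lemma~\ref{mcontradiction} grows stronger as $\mathcal{D}_R$ grows: each cheap equality you establish ($\lambda_n$, $\lambda_1$, $\lambda_p$, $\lambda_{p^2}$) shrinks the right-hand side of condition (2), and in (B) the weight $\phi(n/q)$, far from carrying ``comparatively small multiplicity,'' dominates everything outside $\mathcal{D}_R$.
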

\begin{proof}
By Lemma \ref{lambdan}, $\lambda_{n}(S_{1})=\lambda_{n}(S_{2})$. By (\ref{ramanujan}) and (\ref{lambdaphimu}), $\sum_{d\in\mathcal{D}_{S_{1}}}\phi(n/d)=\sum_{d\in\mathcal{D}_{S_{2}}}\phi(n/d)$ and so
\begin{equation}\label{p2q21eq1}
\sum_{d\in\mathcal{D}_{S_{1}}\setminus\mathcal{D}_{S_{2}}}\phi(n/d)=\sum_{d\in\mathcal{D}_{S_{2}}\setminus\mathcal{D}_{S_{1}}}\phi(n/d).
\end{equation}
By Lemma \ref{n/pnotintriangle}, $p^{2}q,pq^{2}\notin\mathcal{D}_{S_{1}}\triangle\mathcal{D}_{S_{2}}$ and so $\mathcal{D}_{S_{1}}\triangle\mathcal{D}_{S_{2}}\subseteq\{q,q^{2},p,pq,p^{2}\}$. Since $\phi(n)/n=\frac{(p-1)(q-1)}{pq}\geq\frac{2\cdot4}{3\cdot5}>\frac{1}{2}$, we have $\phi(n)>n/2$. By Corollary \ref{1equal}, $\lambda_{1}(S_{1})=\lambda_{1}(S_{2})$. Hence, by (\ref{ramanujan}) and (\ref{lambdaphimu}), $\sum_{d\in\mathcal{D}_{S_{1}}}\mu(n/d)=\sum_{d\in\mathcal{D}_{S_{2}}}\mu(n/d)$ and so
$$
\sum_{d\in\mathcal{D}_{S_{1}}\setminus\mathcal{D}_{S_{2}}}\mu(n/d)=\sum_{d\in\mathcal{D}_{S_{2}}\setminus\mathcal{D}_{S_{1}}}\mu(n/d).
$$
Note that $\mu(n/pq)=1$ and that $\forall d\in\{q,q^{2},p,p^{2}\}$, $\mu(n/d)=0$. Taking $\{q,q^{2},p,pq,p^{2}\}$, $\mu(n/\cdot)$, $\{pq\}$, $\mathcal{D}_{S_{1}}\setminus\mathcal{D}_{S_{2}}$, $\mathcal{D}_{S_{2}}\setminus\mathcal{D}_{S_{1}}$, as $A$, $f$, $B$, $A_{1}$, $A_{2}$, in Lemma \ref{notincontradiction}, we have $pq\notin\mathcal{D}_{S_{1}}\triangle\mathcal{D}_{S_{2}}$ and so
$$\mathcal{D}_{S_{1}}\triangle\mathcal{D}_{S_{2}}\subseteq\{q,q^{2},p,p^{2}\}.$$
To prove $\mathcal{D}_{S_{1}}=\mathcal{D}_{S_{2}}$, we rule out the following 4 cases.
\begin{itemize}
\item Case 1: $|\mathcal{D}_{S_{1}}\triangle\mathcal{D}_{S_{2}}|=4$\\
Then $\mathcal{D}_{S_{1}}\triangle\mathcal{D}_{S_{2}}=\{q,q^{2},p,p^{2}\}$. Note that $\phi(n/p)>\phi(n/p^{2})>\phi(n/q^{2})$ and that $\phi(n/p)>\phi(n/q)>\phi(n/q^{2})$. Recall (\ref{p2q21eq1}). Taking $\{q,q^{2},p,p^{2}\}$, $\phi(n/\cdot)$, $\{p,q\}$, $\mathcal{D}_{S_{1}}\setminus\mathcal{D}_{S_{2}}$, $\mathcal{D}_{S_{2}}\setminus\mathcal{D}_{S_{1}}$, as $A$, $f$, $B$, $A_{1}$, $A_{2}$, in Lemma \ref{notincontradiction}, we have
\begin{equation}\label{p2q21eq2}
\{p,q\}\nsubseteq\mathcal{D}_{S_{1}}\setminus\mathcal{D}_{S_{2}}\quad\text{and}\quad \{p,q\}\nsubseteq\mathcal{D}_{S_{2}}\setminus\mathcal{D}_{S_{1}}.
\end{equation}
Taking $\{q,q^{2},p,p^{2}\}$, $\phi(n/\cdot)$, $\{p,p^{2}\}$, $\mathcal{D}_{S_{1}}\setminus\mathcal{D}_{S_{2}}$, $\mathcal{D}_{S_{2}}\setminus\mathcal{D}_{S_{1}}$, as $A$, $f$, $B$, $A_{1}$, $A_{2}$, in Lemma \ref{notincontradiction}, we have
\begin{equation}\label{p2q21eq3}
\{p,p^{2}\}\nsubseteq\mathcal{D}_{S_{1}}\setminus\mathcal{D}_{S_{2}}\quad\text{and}\quad \{p,p^{2}\}\nsubseteq\mathcal{D}_{S_{2}}\setminus\mathcal{D}_{S_{1}}.
\end{equation}
Without loss of generality, we have
\begin{itemize}
\item Subcase 1.1: $\mathcal{D}_{S_{1}}\setminus\mathcal{D}_{S_{2}}=\{q,q^{2},p,p^{2}\}$ and $\mathcal{D}_{S_{2}}\setminus\mathcal{D}_{S_{1}}=\emptyset$\\
    This contradicts (\ref{p2q21eq2}) and (\ref{p2q21eq3}).
\item Subcase 1.2: $\mathcal{D}_{S_{1}}\setminus\mathcal{D}_{S_{2}}=\{q,q^{2},p\}$ and $\mathcal{D}_{S_{2}}\setminus\mathcal{D}_{S_{1}}=\{p^{2}\}$\\
    This contradicts (\ref{p2q21eq2}).
\item Subcase 1.3: $\mathcal{D}_{S_{1}}\setminus\mathcal{D}_{S_{2}}=\{q,p,p^{2}\}$ and $\mathcal{D}_{S_{2}}\setminus\mathcal{D}_{S_{1}}=\{q^{2}\}$\\
    This contradicts (\ref{p2q21eq2}) and (\ref{p2q21eq3}).
\item Subcase 1.4: $\mathcal{D}_{S_{1}}\setminus\mathcal{D}_{S_{2}}=\{q^{2},p,p^{2}\}$ and $\mathcal{D}_{S_{2}}\setminus\mathcal{D}_{S_{1}}=\{q\}$\\
    This contradicts (\ref{p2q21eq3}).

\item Subcase 1.5: $\mathcal{D}_{S_{1}}\setminus\mathcal{D}_{S_{2}}=\{q,q^{2},p^{2}\}$ and $\mathcal{D}_{S_{2}}\setminus\mathcal{D}_{S_{1}}=\{p\}$\\
    Then (\ref{p2q21eq1}) implies that $(p-2)(q-1)=p(p-1)$. Set $q=p+k$. Then $k$ is a positive even integer. If $k=2$, then, by simple calculation, $(p-2)(q-1)=p(p-1)$ leads to a contradiction. If $k\geq4$, then $(p-2)(q-1)=p(p-1)$ implies $p=\frac{2k-2}{k-2}<4$. Hence $p=3$ and $q=7$. Recall that $1\in\mathcal{D}_{S_{1}}\cap\mathcal{D}_{S_{2}}$. By Lemma \ref{3272}, $\spec(\ICG(n,\mathcal{D}_{S_{1}}))\neq\spec(\ICG(n,\mathcal{D}_{S_{2}}))$, which is a contradiction.

\item Subcase 1.6: $\mathcal{D}_{S_{1}}\setminus\mathcal{D}_{S_{2}}=\{q,p\}$ and $\mathcal{D}_{S_{2}}\setminus\mathcal{D}_{S_{1}}=\{q^{2},p^{2}\}$\\
    This contradicts (\ref{p2q21eq2}).

\item Subcase 1.7: $\mathcal{D}_{S_{1}}\setminus\mathcal{D}_{S_{2}}=\{q^{2},p\}$ and $\mathcal{D}_{S_{2}}\setminus\mathcal{D}_{S_{1}}=\{q,p^{2}\}$\\
    Then (\ref{p2q21eq1}) implies that $(p-1)q(q-1)+p(p-1)=q(q-1)+p(p-1)(q-1)$. Set $q=p+k$. Then $k$ is a positive even integer and $k=\frac{-p^{2}+4p-2}{p-2}$. If $p=3$, then $k=1$ is odd, which is a contradiction. If $p>3$, then $k<0$, which is a contradiction.
\item Subcase 1.8: $\mathcal{D}_{S_{1}}\setminus\mathcal{D}_{S_{2}}=\{p,p^{2}\}$ and $\mathcal{D}_{S_{2}}\setminus\mathcal{D}_{S_{1}}=\{q,q^{2}\}$\\
    This contradicts (\ref{p2q21eq3}).

\end{itemize}
\item Case 2: $|\mathcal{D}_{S_{1}}\triangle\mathcal{D}_{S_{2}}|=3$\\
Then we have $\mathcal{D}_{S_{1}}\triangle\mathcal{D}_{S_{2}}=\{q,q^{2},p\}$, $\{q,q^{2},p^{2}\}$, $\{q,p,p^{2}\}$, or $\{q^{2},p,p^{2}\}$.

Suppose that $\mathcal{D}_{S_{1}}\triangle\mathcal{D}_{S_{2}}=\{q,q^{2},p\}$. Note that $\phi(n/p)>\phi(n/q)>\phi(n/q^{2})$. Recall (\ref{p2q21eq1}). Taking $\{q,q^{2},p\}$, $\phi(n/\cdot)$, $\{p,q\}$, $\mathcal{D}_{S_{1}}\setminus\mathcal{D}_{S_{2}}$, $\mathcal{D}_{S_{2}}\setminus\mathcal{D}_{S_{1}}$, as $A$, $f$, $B$, $A_{1}$, $A_{2}$, in Lemma \ref{notincontradiction}, we have
\begin{equation}\label{p2q21eq4}
\{p,q\}\nsubseteq\mathcal{D}_{S_{1}}\setminus\mathcal{D}_{S_{2}}\quad\text{and}\quad \{p,q\}\nsubseteq\mathcal{D}_{S_{2}}\setminus\mathcal{D}_{S_{1}}.
\end{equation}
Taking $\{q,q^{2},p\}$, $\phi(n/\cdot)$, $\{p,q^{2}\}$, $\mathcal{D}_{S_{1}}\setminus\mathcal{D}_{S_{2}}$, $\mathcal{D}_{S_{2}}\setminus\mathcal{D}_{S_{1}}$, as $A$, $f$, $B$, $A_{1}$, $A_{2}$, in Lemma \ref{notincontradiction}, we have
\begin{equation}\label{p2q21eq5}
\{p,q^{2}\}\nsubseteq\mathcal{D}_{S_{1}}\setminus\mathcal{D}_{S_{2}}\quad\text{and}\quad \{p,q^{2}\}\nsubseteq\mathcal{D}_{S_{2}}\setminus\mathcal{D}_{S_{1}}.
\end{equation}
Without loss of generality, we have
\begin{itemize}
\item Subcase 2.1: $\mathcal{D}_{S_{1}}\setminus\mathcal{D}_{S_{2}}=\{q,q^{2},p\}$ and $\mathcal{D}_{S_{2}}\setminus\mathcal{D}_{S_{1}}=\emptyset$\\
This contradicts (\ref{p2q21eq4}) and (\ref{p2q21eq5}).

\item Subcase 2.2: $\mathcal{D}_{S_{1}}\setminus\mathcal{D}_{S_{2}}=\{q,p\}$ and $\mathcal{D}_{S_{2}}\setminus\mathcal{D}_{S_{1}}=\{q^{2}\}$\\
This contradicts (\ref{p2q21eq4}).

\item Subcase 2.3: $\mathcal{D}_{S_{1}}\setminus\mathcal{D}_{S_{2}}=\{q^{2},p\}$ and $\mathcal{D}_{S_{2}}\setminus\mathcal{D}_{S_{1}}=\{q\}$\\
This contradicts (\ref{p2q21eq5}).

\item Subcase 2.4: $\mathcal{D}_{S_{1}}\setminus\mathcal{D}_{S_{2}}=\{q,q^{2}\}$ and $\mathcal{D}_{S_{2}}\setminus\mathcal{D}_{S_{1}}=\{p\}$\\
Then (\ref{p2q21eq1}) implies that $q=p+1$. Since both $p$ and $q$ are odd, we have a contradiction.
\end{itemize}

Suppose that $\mathcal{D}_{S_{1}}\triangle\mathcal{D}_{S_{2}}=\{q,q^{2},p^{2}\}$. Note that $\phi(n/q)>\phi(n/q^{2})$ and that $\phi(n/p^{2})>\phi(n/q^{2})$. Recall (\ref{p2q21eq1}). Taking $\{q,q^{2},p^{2}\}$, $\phi(n/\cdot)$, $\{q,p^{2}\}$, $\mathcal{D}_{S_{1}}\setminus\mathcal{D}_{S_{2}}$, $\mathcal{D}_{S_{2}}\setminus\mathcal{D}_{S_{1}}$, as $A$, $f$, $B$, $A_{1}$, $A_{2}$, in Lemma \ref{notincontradiction}, we have
\begin{equation}\label{p2q21eq6}
\{q,p^{2}\}\nsubseteq\mathcal{D}_{S_{1}}\setminus\mathcal{D}_{S_{2}}\quad\text{and}\quad \{q,p^{2}\}\nsubseteq\mathcal{D}_{S_{2}}\setminus\mathcal{D}_{S_{1}}.
\end{equation}
Without loss of generality, we have
\begin{itemize}
\item Subcase 2.5: $\mathcal{D}_{S_{1}}\setminus\mathcal{D}_{S_{2}}=\{q,q^{2},p^{2}\}$ and $\mathcal{D}_{S_{2}}\setminus\mathcal{D}_{S_{1}}=\emptyset$\\
This contradicts (\ref{p2q21eq6}).
\item Subcase 2.6: $\mathcal{D}_{S_{1}}\setminus\mathcal{D}_{S_{2}}=\{q,p^{2}\}$ and $\mathcal{D}_{S_{2}}\setminus\mathcal{D}_{S_{1}}=\{q^{2}\}$\\
This contradicts (\ref{p2q21eq6}).
\item Subcase 2.7: $\mathcal{D}_{S_{1}}\setminus\mathcal{D}_{S_{2}}=\{p^{2},q^{2}\}$ and $\mathcal{D}_{S_{2}}\setminus\mathcal{D}_{S_{1}}=\{q\}$\\
Then (\ref{p2q21eq1}) implies that $p(p-1)(q-2)=q(q-1)$. Then $q|(p-1)(q-2)=(p-1)q-2(p-1)$ and so $q|2(p-1)$. Since $q>p-1$, we have $q=2(p-1)$. Since $q$ is odd, we have a contradiction.
\item Subcase 2.8: $\mathcal{D}_{S_{1}}\setminus\mathcal{D}_{S_{2}}=\{q,q^{2}\}$ and $\mathcal{D}_{S_{2}}\setminus\mathcal{D}_{S_{1}}=\{p^{2}\}$\\
Then (\ref{p2q21eq1}) implies that
\begin{equation}\label{p2q21eq7}
q=p^{2}-p+1.
\end{equation}
Besides, we have
\begin{align*}
\lambda_{p}(S_{1})
&=\mathcal{R}_{n/q}(p)+\mathcal{R}_{n/q^{2}}(p)
+\sum_{d\in\mathcal{D}_{S_{1}}\cap\mathcal{D}_{S_{2}}}\mathcal{R}_{n/d}(p)\tag{by (\ref{ramanujan})}
\\&=p-p+\sum_{d\in\mathcal{D}_{S_{1}}\cap\mathcal{D}_{S_{2}}}\mathcal{R}_{n/d}(p)\tag{by (\ref{lambdaphimu})}
\\&=0+\sum_{d\in\mathcal{D}_{S_{1}}\cap\mathcal{D}_{S_{2}}}\mathcal{R}_{n/d}(p)
\\&=\mathcal{R}_{n/p^{2}}(p)
+\sum_{d\in\mathcal{D}_{S_{1}}\cap\mathcal{D}_{S_{2}}}\mathcal{R}_{n/d}(p)\tag{by (\ref{lambdaphimu})}
\\&=\lambda_{p}(S_{2})\tag{by (\ref{ramanujan})}
\end{align*}
and
\begin{align*}
\lambda_{p^{2}}(S_{1})
&=\mathcal{R}_{n/q}(p^{2})+\mathcal{R}_{n/q^{2}}(p^{2})
+\sum_{d\in\mathcal{D}_{S_{1}}\cap\mathcal{D}_{S_{2}}}\mathcal{R}_{n/d}(p^{2})\tag{by (\ref{ramanujan})}
\\&=-p(p-1)+p(p-1)+\sum_{d\in\mathcal{D}_{S_{1}}\cap\mathcal{D}_{S_{2}}}\mathcal{R}_{n/d}(p^{2})\tag{by (\ref{lambdaphimu})}
\\&=0+\sum_{d\in\mathcal{D}_{S_{1}}\cap\mathcal{D}_{S_{2}}}\mathcal{R}_{n/d}(p^{2})
\\&=\mathcal{R}_{n/p^{2}}(p^{2})
+\sum_{d\in\mathcal{D}_{S_{1}}\cap\mathcal{D}_{S_{2}}}\mathcal{R}_{n/d}(p^{2})\tag{by (\ref{lambdaphimu})}
\\&=\lambda_{p^{2}}(S_{2}).\tag{by (\ref{ramanujan})}
\end{align*}
Since $p\geq3$, we have $p^{2}(p^{2}-3p+1)>-1$. By (\ref{p2q21eq7}), the inequality, $p^{2}(p^{2}-3p+1)>-1$, implies
$$
\phi(n/q)>n-\sum_{d\in\{n,1,p,p^{2}\}}\phi(n/d)-\phi(n/q).
$$
Taking $\{n,1,p,p^{2}\}$, $\{q\}$, as $\mathcal{D}_{R}$, $\mathcal{D}_{T}$, in Lemma \ref{mcontradiction}, we have
$$\lambda_{q}(S_{1})=\lambda_{q}(S_{2}).$$
However,
\begin{align*}
\lambda_{q}(S_{1})
&=\mathcal{R}_{n/q}(q)+\mathcal{R}_{n/q^{2}}(q)
+\sum_{d\in\mathcal{D}_{S_{1}}\cap\mathcal{D}_{S_{2}}}\mathcal{R}_{n/d}(q)\tag{by (\ref{ramanujan})}
\\&=0+0+\sum_{d\in\mathcal{D}_{S_{1}}\cap\mathcal{D}_{S_{2}}}\mathcal{R}_{n/d}(q)\tag{by (\ref{lambdaphimu})}
\\&>-q+\sum_{d\in\mathcal{D}_{S_{1}}\cap\mathcal{D}_{S_{2}}}\mathcal{R}_{n/d}(q)
\\&=\mathcal{R}_{n/p^{2}}(q)
+\sum_{d\in\mathcal{D}_{S_{1}}\cap\mathcal{D}_{S_{2}}}\mathcal{R}_{n/d}(q)\tag{by (\ref{lambdaphimu})}
\\&=\lambda_{q}(S_{2})\tag{by (\ref{ramanujan})},
\end{align*}
which is a contradiction.
\end{itemize}

Suppose that $\mathcal{D}_{S_{1}}\triangle\mathcal{D}_{S_{2}}=\{q,p,p^{2}\}$. Note that $\phi(n/p)>\phi(n/q)$ and that $\phi(n/p)>\phi(n/p^{2})$. Recall (\ref{p2q21eq1}). Taking $\{q,p,p^{2}\}$, $\phi(n/\cdot)$, $\{q,p\}$, $\mathcal{D}_{S_{1}}\setminus\mathcal{D}_{S_{2}}$, $\mathcal{D}_{S_{2}}\setminus\mathcal{D}_{S_{1}}$, as $A$, $f$, $B$, $A_{1}$, $A_{2}$, in Lemma \ref{notincontradiction}, we have
\begin{equation}\label{p2q21eq8}
\{q,p\}\nsubseteq\mathcal{D}_{S_{1}}\setminus\mathcal{D}_{S_{2}}\quad\text{and}\quad \{q,p\}\nsubseteq\mathcal{D}_{S_{2}}\setminus\mathcal{D}_{S_{1}}.
\end{equation}
Recall (\ref{p2q21eq1}). Taking $\{q,p,p^{2}\}$, $\phi(n/\cdot)$, $\{p,p^{2}\}$, $\mathcal{D}_{S_{1}}\setminus\mathcal{D}_{S_{2}}$, $\mathcal{D}_{S_{2}}\setminus\mathcal{D}_{S_{1}}$, as $A$, $f$, $B$, $A_{1}$, $A_{2}$, in Lemma \ref{notincontradiction}, we have
\begin{equation}\label{p2q21eq9}
\{p,p^{2}\}\nsubseteq\mathcal{D}_{S_{1}}\setminus\mathcal{D}_{S_{2}}\quad\text{and}\quad \{p,p^{2}\}\nsubseteq\mathcal{D}_{S_{2}}\setminus\mathcal{D}_{S_{1}}.
\end{equation}
Without loss of generality, we have
\begin{itemize}
\item Subcase 2.9: $\mathcal{D}_{S_{1}}\setminus\mathcal{D}_{S_{2}}=\{q,p,p^{2}\}$ and $\mathcal{D}_{S_{2}}\setminus\mathcal{D}_{S_{1}}=\emptyset$\\
This contradicts (\ref{p2q21eq8}) and (\ref{p2q21eq9}).

\item Subcase 2.10: $\mathcal{D}_{S_{1}}\setminus\mathcal{D}_{S_{2}}=\{q,p\}$ and $\mathcal{D}_{S_{2}}\setminus\mathcal{D}_{S_{1}}=\{p^{2}\}$\\
This contradicts (\ref{p2q21eq8}).

\item Subcase 2.11: $\mathcal{D}_{S_{1}}\setminus\mathcal{D}_{S_{2}}=\{p,p^{2}\}$ and $\mathcal{D}_{S_{2}}\setminus\mathcal{D}_{S_{1}}=\{q\}$\\
This contradicts (\ref{p2q21eq9}).

\item Subcase 2.12: $\mathcal{D}_{S_{1}}\setminus\mathcal{D}_{S_{2}}=\{p^{2},q\}$ and $\mathcal{D}_{S_{2}}\setminus\mathcal{D}_{S_{1}}=\{p\}$\\
Then (\ref{p2q21eq1}) implies that $(p-1)q(q-1)=q(q-1)+p(p-1)(q-1)$. Set $q=p+k$. Then $k$ is a positive even integer and $p=\frac{2k}{k-1}$. If $k=2$, then $p=4$, which is a contradiction. If $k\geq4$, then $p=\frac{2k}{k-1}<3$, which is a contradiction.
\end{itemize}

Now we have $\mathcal{D}_{S_{1}}\triangle\mathcal{D}_{S_{2}}=\{q^{2},p,p^{2}\}$. Note that $\phi(n/p)>\phi(n/p^{2})>\phi(n/q^{2})$. Recall (\ref{p2q21eq1}). Taking $\{q^{2},p,p^{2}\}$, $\phi(n/\cdot)$, $\{p,p^{2}\}$, $\mathcal{D}_{S_{1}}\setminus\mathcal{D}_{S_{2}}$, $\mathcal{D}_{S_{2}}\setminus\mathcal{D}_{S_{1}}$, as $A$, $f$, $B$, $A_{1}$, $A_{2}$, in Lemma \ref{notincontradiction}, we have
\begin{equation}\label{p2q21eq10}
\{p,p^{2}\}\nsubseteq\mathcal{D}_{S_{1}}\setminus\mathcal{D}_{S_{2}}\quad\text{and}\quad \{p,p^{2}\}\nsubseteq\mathcal{D}_{S_{2}}\setminus\mathcal{D}_{S_{1}}.
\end{equation}
Taking $\{q^{2},p,p^{2}\}$, $\phi(n/\cdot)$, $\{p,q^{2}\}$, $\mathcal{D}_{S_{1}}\setminus\mathcal{D}_{S_{2}}$, $\mathcal{D}_{S_{2}}\setminus\mathcal{D}_{S_{1}}$, as $A$, $f$, $B$, $A_{1}$, $A_{2}$, in Lemma \ref{notincontradiction}, we have
\begin{equation}\label{p2q21eq11}
\{p,q^{2}\}\nsubseteq\mathcal{D}_{S_{1}}\setminus\mathcal{D}_{S_{2}}\quad\text{and}\quad \{p,q^{2}\}\nsubseteq\mathcal{D}_{S_{2}}\setminus\mathcal{D}_{S_{1}}.
\end{equation}
Without loss of generality, we have
\begin{itemize}
\item Subcase 2.13: $\mathcal{D}_{S_{1}}\setminus\mathcal{D}_{S_{2}}=\{q^{2},p,p^{2}\}$ and $\mathcal{D}_{S_{2}}\setminus\mathcal{D}_{S_{1}}=\emptyset$\\
This contradicts (\ref{p2q21eq10}) and (\ref{p2q21eq11}).

\item Subcase 2.14: $\mathcal{D}_{S_{1}}\setminus\mathcal{D}_{S_{2}}=\{p,p^{2}\}$ and $\mathcal{D}_{S_{2}}\setminus\mathcal{D}_{S_{1}}=\{q^{2}\}$\\
This contradicts (\ref{p2q21eq10}).

\item Subcase 2.15: $\mathcal{D}_{S_{1}}\setminus\mathcal{D}_{S_{2}}=\{p,q^{2}\}$ and $\mathcal{D}_{S_{2}}\setminus\mathcal{D}_{S_{1}}=\{p^{2}\}$\\
This contradicts (\ref{p2q21eq11}).

\item Subcase 2.16: $\mathcal{D}_{S_{1}}\setminus\mathcal{D}_{S_{2}}=\{p^{2},q^{2}\}$ and $\mathcal{D}_{S_{2}}\setminus\mathcal{D}_{S_{1}}=\{p\}$\\
Then (\ref{p2q21eq1}) implies that $(p-2)q(q-1)=p(p-1)$. Since $p\geq3$ and $q(q-1)>p(p-1)$, we have $(p-2)q(q-1)>p(p-1)$, which is a contradiction.

\end{itemize}

\item Case 3: $|\mathcal{D}_{S_{1}}\triangle\mathcal{D}_{S_{2}}|=2$\\
Set $\mathcal{D}_{S_{1}}\triangle\mathcal{D}_{S_{2}}=\{a_{1},a_{2}\}$. Taking $\mathcal{D}_{S_{1}}\triangle\mathcal{D}_{S_{2}}$, $\phi(n/\cdot)$, $\mathcal{D}_{S_{1}}\setminus\mathcal{D}_{S_{2}}$, $\mathcal{D}_{S_{2}}\setminus\mathcal{D}_{S_{1}}$, as $A$, $f$, $A_{1}$, $A_{2}$, in Corollary \ref{notincontradictioncor}, we have either $\phi(n/a_{1})=\phi(n/a_{2})$ while $|\mathcal{D}_{S_{1}}\setminus\mathcal{D}_{S_{2}}|=|\mathcal{D}_{S_{2}}\setminus\mathcal{D}_{S_{1}}|=1$, or $\mathcal{D}_{S_{1}}\triangle\mathcal{D}_{S_{2}}=\emptyset$ leading to a contradiction. Note that $\phi(n/p)>\phi(n/p^{2})>\phi(n/q^{2})$ and that $\phi(n/p)>\phi(n/q)>\phi(n/q^{2})$. Without loss of generality, we have $\mathcal{D}_{S_{1}}\setminus\mathcal{D}_{S_{2}}=\{q\}$ and $\mathcal{D}_{S_{2}}\setminus\mathcal{D}_{S_{1}}=\{p^{2}\}$. Then (\ref{p2q21eq1}) implies that $q=p(p-1)$. Since $q$ is odd, we have a contradiction.

\item Case 4: $|\mathcal{D}_{S_{1}}\triangle\mathcal{D}_{S_{2}}|=1$\\
Recall (\ref{p2q21eq1}). Taking $\mathcal{D}_{S_{1}}\triangle\mathcal{D}_{S_{2}}$, $\phi(n/\cdot)$, $\mathcal{D}_{S_{1}}\setminus\mathcal{D}_{S_{2}}$, $\mathcal{D}_{S_{2}}\setminus\mathcal{D}_{S_{1}}$, as $A$, $f$, $A_{1}$, $A_{2}$, in Lemma \ref{notincontradictionnew}, we have $\mathcal{D}_{S_{1}}\triangle\mathcal{D}_{S_{2}}=\emptyset$, which is a contradiction.



\end{itemize}

This completes the proof.\qed

\end{proof}

\begin{theorem}\label{p2q2}
Set $n=p^{2}q^{2}$ with primes $3\leq p<q$. Let $\mathcal{D}_{S_{1}}$ and $\mathcal{D}_{S_{2}}$ be two subsets of $\mathcal{D}_{[n]}\setminus\{n\}$. Then $\spec(\ICG(n,\mathcal{D}_{S_{1}}))
=\spec(\ICG(n,\mathcal{D}_{S_{2}}))$ implies $\mathcal{D}_{S_{1}}=\mathcal{D}_{S_{2}}$.

\end{theorem}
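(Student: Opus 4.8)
The plan is to deduce Theorem \ref{p2q2} from Lemma \ref{p2q21} by removing the extra hypothesis $1\in\mathcal{D}_{S_{1}}\cap\mathcal{D}_{S_{2}}$, exactly as Theorem \ref{p3q} was deduced from Lemma \ref{p3q1}. The engine for this reduction is Corollary \ref{regularisocor}: taking $d_{0}=1$ there, Lemma \ref{p2q21} is precisely statement (a), and I only need to verify the weaker hypothesis $1\notin\mathcal{D}_{S_{1}}\triangle\mathcal{D}_{S_{2}}$ in order to invoke statement (b) and conclude.

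First I would establish the numerical inequality $\phi(n)>n/2$. Since $n=p^{2}q^{2}$ with $3\leq p<q$, the smallest admissible primes are $p=3$ and $q=5$, so
$$
\frac{\phi(n)}{n}=\frac{(p-1)(q-1)}{pq}\geq\frac{2\cdot4}{3\cdot5}=\frac{8}{15}>\frac{1}{2}.
$$
This is the only genuinely arithmetic point in the proof, and it hinges on the assumption $p\geq3$ (which forces $q\geq5$); note the even case is excluded here precisely because it would spoil this bound.

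Next I would feed this bound into Lemma \ref{1notintriangle}, whose hypothesis is exactly $\phi(n)\geq n/2$, to obtain $1\notin\mathcal{D}_{S_{1}}\triangle\mathcal{D}_{S_{2}}$ whenever $\spec(\ICG(n,\mathcal{D}_{S_{1}}))=\spec(\ICG(n,\mathcal{D}_{S_{2}}))$. Finally, applying Corollary \ref{regularisocor} with $d_{0}=1$, where Lemma \ref{p2q21} supplies condition (a) and the previous step supplies condition (1) of (b), yields $\mathcal{D}_{S_{1}}=\mathcal{D}_{S_{2}}$, completing the proof.

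The main obstacle is not in this theorem at all but is already absorbed into Lemma \ref{p2q21}, which disposes of every possible shape of $\mathcal{D}_{S_{1}}\triangle\mathcal{D}_{S_{2}}\subseteq\{q,q^{2},p,p^{2}\}$ through a lengthy case analysis (using the super-sequence and Ramanujan-sum machinery, plus the exceptional computation at $p=3,q=7$ handled by Lemma \ref{3272}). At the level of Theorem \ref{p2q2} itself, the work reduces to the clean three-line chain above, so I expect no difficulty beyond correctly citing the prerequisites and confirming the $\phi(n)>n/2$ estimate.
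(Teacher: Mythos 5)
Your proposal is correct and follows the paper's own proof of Theorem \ref{p2q2} essentially verbatim: the paper likewise computes $\phi(n)/n=\frac{(p-1)(q-1)}{pq}\geq\frac{2\cdot 4}{3\cdot 5}>\frac{1}{2}$, invokes Lemma \ref{1notintriangle} to get $1\notin\mathcal{D}_{S_{1}}\triangle\mathcal{D}_{S_{2}}$, and then concludes via Lemma \ref{p2q21} together with Corollary \ref{regularisocor} (with $d_{0}=1$). Your added remarks --- that the case analysis is entirely absorbed into Lemma \ref{p2q21} and that $p\geq 3$ is what makes the $\phi(n)>n/2$ bound work --- are accurate glosses on the same argument, not a departure from it.
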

\begin{proof}
Since $\phi(n)/n=\frac{(p-1)(q-1)}{pq}\geq\frac{2\cdot4}{3\cdot5}>\frac{1}{2}$, we have $\phi(n)>n/2$. By Lemma \ref{1notintriangle}, $1\notin\mathcal{D}_{S_{1}}\triangle\mathcal{D}_{S_{2}}$. By Lemma \ref{p2q21} and Corollay \ref{regularisocor}, we obtain the result.\qed

\end{proof}





\section{Conclusion}\label{conclusion}
In this work, we affirm So's conjecture for 4 types of integral circulant graphs. From our experience, it is difficult to completely solve So's conjecture and new methods should be involved. It is natural to discuss integral circulant graphs of order in other forms. However, without new techniques involved, it might be more complicated.

\section*{Acknowledgements}
The authors greatly appreciate the comments and suggestions from the anonymous referees.







\begin{thebibliography}{99}
\small{



\bibitem{3}
A. Abodollahi, S. Janbaz, M. Ghahramani, A large family of cospectral Cayley graphs over dihedral groups, Discrete Math. 340 (2017) 1116--1121.



\bibitem{babai}
L. Babai, Spectra of Cayley graphs, J. Combin. Theory Ser. B. 27 (1979) 180--189.


\bibitem{sap}
C. Cusanza, Integral circulant graphs with the spectral Adam property, San Jose State University Master Thesis (2006). DOI:https://doi.org/10.31979/etd.drak-xmy4.


\bibitem{which}
E. van Dam, W. Haemers, Which graphs are determined by their spectrum?, Linear Algebra Appl. 373 (2003) 241--272.


\bibitem{circulant}
Q. Huang, A. Chang, Circulant digraphs determined by their spectra, Discrete Math. 240 (2001) 261--270.

\bibitem{squarefree}
A. Ili\'{c}, M. Ba\v{s}i\'{c}, New results on the energy of integral circulant graphs, Appl. Math. Comput. 218 (2011) 3470-3482.


\bibitem{isomorphicconnectionset}
M. Klin, I. Kov\'{a}cs, Automorphism groups of rational circulant graphs, Electron. J. Combin. 19 (2012) \#P35.






\bibitem{construction}
Z. Lou, Q. Huang, X. Huang, Construction of graphs with distinct eigenvalues, Discrete Math. 340 (2017) 607--616.


\bibitem{282}
A. Lubotzky, B. Samuels, U. Vishne, Isospectral Cayley graphs of some finite simple groups, Duke Math. J. 135 (2006) 381--393.

\bibitem{survey}
X. Liu, S. Zhou, Eigenvalues of Cayley graphs, Electron. J. Combin. 29 (2) (2022) \#P2.9, 1--164.




\bibitem{n/p}
K. M\"{o}nius, Algebraic and Arithmetic properties of graph spectra, Julius-Maximilians University Wurzburg, Ph. D. Thesis (2020).

\bibitem{howmany}
K. M\"{o}nius, W. So, How many non-isospectral integral circulant graphs are there?, Australas. J. Combin. 86 (2023) 320--335.
\bibitem{newmethod}
L. Mao, F. Liu, W. Wang, A new method for constructing graphs determined by their generalized spectrum, Linear Algebra Appl. 477 (2015) 112-127.





\bibitem{numbertheory}
K.H. Rosen, Elementary Number Theory 6th, Pearson Education, Inc., 2011.

\bibitem{ramanujan}
S. Ramanujan, On certain trigonometrical sums and their applications in the theory of numbers, Cambridge Phil. Trans. 22 (1918) 259-276.

\bibitem{so0}
W. So, Integral circulant graphs, Discrete Math. 306 (2005) 153--158.






\bibitem{toida}
S. Toida, A note on Ad\`{a}m's conjecture, J. Combin. Theory Ser. B 23 (1977) 239--246.

\bibitem{368}
L. Tang, T. Cheng, W. Liu, L. Feng, A large family of cospectral Cayley graphs over dicyclic groups, Discrete Math. 344 (2021) 112618.




\bibitem{cospectral}
J. Wang, Q. Huang, Y. Liu, R. Liu, C. Ye, The cospectral equivalence classes of graphs having an isolated vertex, Comput. Math. Appl. 57 (2009) 1638--1644.















}

\end{thebibliography}
\end{document}